\documentclass{amsart}
\usepackage[utf8]{inputenc}

\usepackage{geometry}
\geometry{verbose,letterpaper,tmargin=3cm,bmargin=3cm,lmargin=2.5cm,rmargin=2.5cm}

\usepackage{epsdice}

\usepackage{enumitem}

\usepackage{color}

\usepackage[normalem]{ulem}

\usepackage[OT2,T1]{fontenc}

\usepackage[final]{microtype}

\usepackage{tikz}
\usepackage{tikz-cd}
\usepackage{xypic}


\usepackage[draft=false,linktocpage=true]{hyperref}
\usepackage[capitalise]{cleveref}

\usepackage{mathtools}
\usepackage{amsmath}
\usepackage{mathrsfs}

\usepackage{amsthm,amssymb}
\numberwithin{equation}{section}

\theoremstyle{plain}
\newtheorem{theorem}[equation]{Theorem}

\newtheorem{proposition}[equation]{Proposition}
\newtheorem{lemma}[equation]{Lemma}

\newtheorem{corollary}[equation]{Corollary}

\theoremstyle{definition}
\newtheorem{definition}[equation]{Definition}
\newtheorem{question}[equation]{Question}

\newtheorem{example}[equation]{Example}

\newtheorem{remark}[equation]{Remark}

\newtheorem{construction}[equation]{Construction}

\DeclareMathOperator{\Spf}{Spf}
\DeclareMathOperator{\Spa}{Spa}
\DeclareMathOperator{\Fil}{Fil}
\DeclareMathOperator{\Gr}{Gr}

\newcommand{\cC}{\mathcal{C}}
\newcommand{\cF}{\mathcal{F}}
\newcommand{\cO}{\mathcal{O}}

\def\BB{\mathbb{B}}
\def\LL{\mathbb{L}}
\def\NN{\mathbb{N}}
\def\rmD{\mathrm{D}}
\def\rmH{\mathrm{H}}
\def\rmL{\mathrm{L}}
\def\scrC{\mathscr{C}}
\def\scrD{\mathscr{D}}
\def\scrH{\mathscr{H}}
\def\scrU{\mathscr{U}}
\def\scrX{\mathscr{X}}

\def\wh{\widehat}
\def\ra{\rightarrow}
\def\rra{\longrightarrow}

\newcommand{\an}{{\mathrm{an}}}
\newcommand{\ic}{{$\infty$-category }}
\newcommand{\Fun}{{\mathrm{Fun}}}
\newcommand{\colim}{{\mathrm{colim}}}

\newcommand{\CAlg}{{\mathrm{CAlg}}}
\newcommand{\Ch}{{\mathrm{Ch}}}
\newcommand{\dR}{{\mathrm{dR}}}
\newcommand{\pe}{{\mathrm{pro\acute{e}t}}}
\newcommand{\Sh}{{\mathrm{Sh}}}
\newcommand{\Symm}{{\mathrm{Sym}}}
\newcommand{\op}{{\mathrm{op}}}
\newcommand{\st}{{\mathrm{st}}}
\newcommand{\cry}{{\mathrm{crys}}}
\newcommand{\DF}{{\mathrm{DF}}}

\title{Period Sheaves via derived de Rham cohomology}

\author{Haoyang Guo}

\address{Department of Mathematics,University of Michigan, 530 Church Street,
  Ann Arbor, MI 48109}
  
\email{hyguo@umich.edu}

\author{Shizhang Li}

\address{Department of Mathematics,University of Michigan, 530 Church Street,
  Ann Arbor, MI 48109}

\email{shizhang@umich.edu}

\begin{document}

\maketitle

\begin{abstract}
In this article we give an interpretation, in terms of derived
de Rham complexes, of Scholze's de Rham period sheaf
and Tan--Tong's crystalline period sheaf.
\end{abstract}

\tableofcontents

\section{Introduction}
Fontaine's mysterious period rings are essential in formulating various
$p$-adic comparison statements in $p$-adic Hodge theory.
In the past decades there has been an effort to understand these period rings
via other constructions related to differentials.

For instance Colmez realized that one can put a topology on $\overline{\mathbb{Q}_p}$,
related to K\"{a}hler differentials of $\overline{\mathbb{Z}_p}/\mathbb{Z}_p$,
with respect to which the completion becomes the de Rham period ring $B_{\dR}^+$,
see~\cite[Appendix]{Fon94} (which is polished and published in~\cite{Col12}).

Later on Beilinson~\cite[Section 1]{Bei12} 
gives another construction of $B_{\dR}^+$ in terms
of the derived de Rham cohomology (introduced by Illusie in \cite[Chapter VIII]{Ill72})
of $\overline{\mathbb{Q}_p}/\mathbb{Q}_p$.
In terms of our notation, he shows that there is a filtered isomorphism
\[
B_{\dR}^+ \cong \wh\dR^\an_{\overline{\mathbb{Q}_p}/\mathbb{Q}_p};
\]
see~\Cref{ddR construction aff} for the meaning of the right hand side
and~\Cref{Beilinson Colmez example}.\footnote{
For the relation between these two constructions, see~\cite[Proposition 1.6]{Bei12}.}
In a similar vein, Bhatt~\cite[Proposition 9.9]{Bha12} exhibits a filtered isomorphism,
realizing the crystalline period ring
via derived de Rham cohomology of $\overline{\mathbb{Z}_p}/\mathbb{Z}_p$:
\[
A_{\cry} \cong \dR^\an_{\overline{\mathbb{Z}_p}/\mathbb{Z}_p};
\]
see~\Cref{int construction} and~\Cref{Bhatt example}.

Fontaine's period rings admit various generalizations in geometric situations,
for instance see \cite{Fal89}, \cite[Sections 5-6]{Brinon}, \cite[Section 2]{AI13},
\cite[Section 6]{Sch13} and~\cite[Section 2]{TT19}.
From now on let us focus on the ones introduced by Scholze:
recall in his proof of $p$-adic de Rham comparison for smooth proper rigid spaces
over $p$-adic fields~\cite{Sch13}, Scholze introduces period sheaves
$\mathbb{B}_{\dR}^+$ and $\cO\mathbb{B}_{\dR}^+$ (see~\cite[Definition 6.1 and 6.8]{Sch13} and~\cite{SchCor})
on the pro-\'{e}tale site of a smooth rigid space.
However the construction of $\cO\mathbb{B}_{\dR}^+$ is somewhat complicated,
and it takes one a fair amount of effort to understand $\cO\mathbb{B}_{\dR}^+$.
From this understanding Scholze 
deduces a long exact sequence~\cite[Corollary 6.13]{Sch13}:
\[
0 \ra \mathbb{B}_{\dR}^+ \ra \cO\mathbb{B}_{\dR}^+
\xrightarrow{\nabla} \cO\mathbb{B}_{\dR}^+ \otimes_{\cO_X} \Omega^\an_X
\xrightarrow{\nabla} \ldots 
\xrightarrow{\nabla} \cO\mathbb{B}_{\dR}^+ \otimes_{\cO_X} \Omega^{\dim_X, \an}_X \to 0,
\]
known as the $p$-adic analogue of the Poincar\'{e} sequence.
Here $\nabla$ is a connection which behaves like classical Gauss--Manin connection
(satisfying certain Griffiths transversality and so on).

Following the theme, in this article
we explain how to understand Scholze's de Rham period sheaf $\cO\mathbb{B}_{\dR}^+$
in terms of suitable (analytic) derived de Rham sheaves.

Let $k$ be a $p$-adic field. 
In this paper, we introduce the (Hodge-completed)
analytic derived de Rham sheaf $\wh\dR^\an_{X_{\pe}/X}$
for the pro-\'{e}tale site $X_{\pe}$ relative to the analytic site $X$.
Similarly there is also a construction $\wh\dR^\an_{X_{\pe}/k}$ for $X_{\pe}$
relative to $k$. Our main result is the following:
\begin{theorem}[{see~\Cref{BdR+ comparison} and~\Cref{OBdR+ comparison}
for the precise statement}]
Let $X$ be a smooth rigid space over $k$, we have natural filtered isomorphisms:
\[
\mathbb{B}_{\dR}^+  \cong \wh\dR^\an_{X_{\pe}/k} \text{ and }
\cO\mathbb{B}_{\dR}^+ \cong \wh\dR^\an_{X_{\pe}/X}.
\]
\end{theorem}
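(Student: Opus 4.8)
The plan is to construct canonical filtered comparison maps and then check that they are isomorphisms locally on the pro-\'etale site, where the computation follows the pattern established by Beilinson and Bhatt. For the maps, recall that for $A\to B$ the Hodge-completed derived de Rham algebra $\wh\dR_{B/A}$ is a complete filtered $A$-algebra with $\wh\dR_{B/A}/\Fil^1=B$, and a filtered map $\wh\dR_{B/A}\to D$ into a complete filtered $A$-algebra $D$ is the same datum as an $A$-algebra map $B\to D/\Fil^1 D$. Since $\mathbb{B}_{\dR}^+/\Fil^1\cong\widehat{\cO}_X\cong\cO\mathbb{B}_{\dR}^+/\Fil^1$, and $\mathbb{B}_{\dR}^+$ is a $k$-algebra while $\cO\mathbb{B}_{\dR}^+$ is an $\cO_X$-algebra, the respective identity maps of $\widehat{\cO}_X$ produce canonical filtered maps of pro-\'etale sheaves $\wh\dR^\an_{X_{\pe}/k}\to\mathbb{B}_{\dR}^+$ and $\wh\dR^\an_{X_{\pe}/X}\to\cO\mathbb{B}_{\dR}^+$. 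To prove these are isomorphisms I would work locally: $X$ is locally \'etale over a torus $\mathbb{T}^d$, and over such a small affinoid $\Spa(A,A^+)$ the standard tower obtained by adjoining $p$-power roots of the coordinates and of unity gives an affinoid perfectoid $\widetilde X=\Spa(\widehat A_\infty,\widehat A_\infty^+)$ which is a pro-\'etale cover; these objects together with their finite \'etale covers and self-products are cofinal in $X_{\pe}$, and all sheaves involved are well behaved on them (by almost acyclicity of $\widehat{\cO}_X^+$), so it suffices to compute $\wh\dR^\an$ of the perfectoid algebra $\widehat A_\infty$ over $k$ and over $A$.

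For the $\mathbb{B}_{\dR}^+$ statement, apply the Beilinson--Bhatt argument to the perfectoid $(R,R^+):=(\widehat A_\infty,\widehat A_\infty^+)$: the ($p$-completed) analytic cotangent complex of Fontaine's ring $\mathbb{A}_{\mathrm{inf}}(R,R^+)=W(R^{\flat+})$ over $\mathbb{Z}_p$ vanishes, by perfectness of $R^{\flat+}$ modulo $p$, so transitivity along $\mathbb{Z}_p\to\mathbb{A}_{\mathrm{inf}}\to R^+$ gives $\wh\dR^\an_{R^+/\mathbb{Z}_p}\simeq\wh\dR^\an_{R^+/\mathbb{A}_{\mathrm{inf}}}$. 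Since $\ker(\theta\colon\mathbb{A}_{\mathrm{inf}}\to R^+)$ is generated by a nonzerodivisor $\xi$, the Hodge-graded pieces of $\wh\dR^\an_{R^+/\mathbb{A}_{\mathrm{inf}}}$ are the divided powers $\Gamma^\bullet(\xi/\xi^2)$, so it is the Hodge-completed PD-envelope of $\xi$; $p$-completion recovers $A_{\cry}(R,R^+)$, and passing to the rational analytic incarnation gives $B_{\dR}^+(R,R^+)=\varprojlim_n\mathbb{A}_{\mathrm{inf}}(R,R^+)[1/p]/\xi^n$ together with its $\xi$-adic filtration. Base change along the finite separable extension $\mathbb{Q}_p\to k$ changes nothing, so $\wh\dR^\an_{R/k}\cong B_{\dR}^+(R,R^+)$ as filtered algebras; sheafifying gives $\wh\dR^\an_{X_{\pe}/k}\cong\mathbb{B}_{\dR}^+$.

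For the $\cO\mathbb{B}_{\dR}^+$ statement I would run the analogous analysis relative to $A$. Base change along $k\langle T^{\pm1}\rangle\to A$ reduces $\LL^\an_{\widehat A_\infty/A}$ to the analytic cotangent complex of $k\langle T^{\pm1/p^\infty}\rangle$ over $k\langle T^{\pm1}\rangle$, and combining the transitivity triangle for $k\to k\langle T^{\pm1}\rangle\to k\langle T^{\pm1/p^\infty}\rangle$ with the arithmetic computation above shows $\LL^\an_{\widehat A_\infty/A}$ is free of rank $d+1$, placed in degree $-1$: an extension of the $d$ geometric logarithmic directions $dT_i$ by the single arithmetic direction $\xi$. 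Hence $\wh\dR^\an_{\widehat A_\infty/A}$ is concentrated in degree $0$ with associated graded a polynomial $\widehat A_\infty$-algebra on these $d+1$ generators, matching $\mathrm{gr}^\bullet\cO\mathbb{B}_{\dR}^+$ coming from Scholze's local description $\cO\mathbb{B}_{\dR}^+\cong\mathbb{B}_{\dR}^+[[t_1,\dots,t_d]]$ with $t_i=T_i-[T_i^\flat]$; one then checks that the comparison map realizes this identification on graded pieces (the arithmetic direction being the $\mathrm{gr}^\bullet\mathbb{B}_{\dR}^+$ already treated, the geometric ones matching $dT_i$ with $t_i$), whence it is a filtered isomorphism by completeness. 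As a bonus, transitivity for $k\to\cO_X\to\widehat{\cO}_X$ expresses $\wh\dR^\an_{X_{\pe}/k}$ as the totalization of $\wh\dR^\an_{X_{\pe}/X}\xrightarrow{\nabla}\wh\dR^\an_{X_{\pe}/X}\otimes_{\cO_X}\Omega^\an_X\to\cdots$, which under the two isomorphisms is precisely Scholze's $p$-adic Poincar\'e sequence, so the connection $\nabla$ is matched too.

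The main obstacle is twofold. First, one must set up the \emph{analytic} derived de Rham formalism for adic spaces so that it sheafifies correctly on $X_{\pe}$, reduces to affinoid perfectoids, and satisfies the vanishing $\LL^\an_{\mathbb{A}_{\mathrm{inf}}/\mathbb{Z}_p}\simeq 0$ that collapses the computation onto PD-envelopes; the analytic cotangent complex and the attendant completions behave differently from their algebraic shadows, so this requires genuine care. Second, and I expect more delicate, is the identification with Scholze's $\cO\mathbb{B}_{\dR}^+$, which is defined by an ad hoc formula carrying an explicit filtration and connection: verifying that the canonical comparison map respects these exactly is where the bulk of the work lies, and the explicit description of $\LL^\an_{\widehat A_\infty/A}$ above, together with compatibility with the Poincar\'e sequence, is the main tool for carrying it out.
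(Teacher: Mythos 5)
Your proposal follows the same computational skeleton as the paper: identify $\wh\dR^\an$ of an affinoid perfectoid over the base field by killing $\LL^\an_{\mathbb{A}_{inf}/\mathbb{Z}_p}$ and recognizing the ($p$-completed, rationalized, Hodge-completed) envelope of $\ker\theta$, exactly as in \Cref{dR when relatively perfect} and \Cref{hypersheaf properties}; for the relative statement, show the Hodge-graded algebra is the symmetric algebra on a rank-$(d+1)$ first graded piece (the Faltings extension, \Cref{ddR aff computation}), reduce the comparison to that first graded piece, split off the arithmetic direction using compatibility with $\mathbb{B}_{\dR}^+$, and finish by matching $dT_i$ with $T_i-[T_i^\flat]$ on the standard toric tower (\Cref{cotangent complex of perfectoid circle}, \Cref{dR of perfectoid circle}, \Cref{rational torus example}) — this is precisely the proof of \Cref{BdR+ comparison} and \Cref{OBdR+ comparison}. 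The genuine divergence is in how the comparison maps are produced, and that is also where your argument is thinnest. You invoke a universal property of the Hodge-completed derived de Rham algebra (filtered maps $\wh\dR_{B/A}\to D$ into a complete filtered algebra $=$ algebra maps $B\to D/\Fil^1$); even granting such an adjunction for the algebraic object in characteristic zero, it is not established for the \emph{analytic} construction used here, which is a colimit over rings of definition followed by $p$-completion, inversion of $p$ and Hodge completion, so as written the maps you start from are not yet constructed. The paper avoids any mapping-out property: on the arithmetic side the identification $\wh\dR^\an_{B/k}\cong B_{\dR}^+(B)$ comes from the crystalline comparison map to the PD envelope (Bhatt/Illusie, via \Cref{dR when relatively perfect}(4)), and for $\cO\mathbb{B}_{\dR}^+$ it builds a map in the \emph{opposite} direction out of Scholze's presentation, using the canonical arrows $R_i^+\to \dR^\an_{R^+/R_i^+}$ and $\mathbb{A}_{inf}(R^+)=\dR^\an_{R^+/W(\kappa)}\to \dR^\an_{R^+/R_i^+}$ to get $\bigl((R_i^+\hat{\otimes}_{W(\kappa)}\mathbb{A}_{inf}(R^+))[1/p]\bigr)^{\wedge}\to\wh\dR^\an_{R/R_i}$ and passing to the colimit; if you keep your direction of map, you would still need to construct it by hand along such lines. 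A second, smaller difference: the paper establishes the graded computation for \emph{every} affinoid perfectoid in the basis $X^\omega_{\pe}$ and then unfolds hypersheaves, rather than relying, as you do, on cofinality of the toric towers plus almost acyclicity; the torus only enters at the final step to verify $g(1\otimes dT_i)=1\otimes dT_i$. What your route would buy, if the universal property were justified in the analytic setting, is a more conceptual origin for the comparison maps; what the paper's route buys is that every map used is explicit, which is what makes the filtration and $\nabla$ compatibilities (and hence the identification of the Poincar\'e sequences) immediate.
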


Moreover in this viewpoint,
one naturally gets the $p$-adic Poincar\'{e} sequence mentioned above.
Indeed, in classical algebraic geometry, suppose 
$X \xrightarrow{f} Y \xrightarrow{g} Z$ is a triangle
of smooth morphisms, then one always has a sequence (see~\cite{KO68}):
\[
0 \to \Omega^*_{X/Z} \to \Omega^*_{X/Y} 
\xrightarrow{\nabla} \Omega^*_{X/Y}  \otimes_{f^{-1}\cO_Y} \Omega^1_{Y/Z}
\xrightarrow{\nabla} \ldots 
\xrightarrow{\nabla} \Omega^*_{X/Y}  \otimes_{f^{-1}\cO_Y} \Omega^{\dim_{Y/Z}}_{Y/Z} \to 0,
\]
whose totalization\footnote{This is only heuristic, as totalizations cannot be made sense at the level of derived category. See~\Cref{Filtration convention} and~\Cref{ddR for a triple}.}, as well as the totalizations of 
the Hodge-graded pieces
(where $\Omega^i_{Y/Z}$ is given degree $i$), are all quasi-isomorphic to $0$.
In the framework of derived de Rham complexes, one has an intuitive base change formula
for a triple of rings $A \to B \to C$:
\[
\dR_{C/A} \otimes_{\dR_{B/A}} B \cong \dR_{C/B},
\]
which leads to a generalization
of the above sequence (see~\Cref{ddR for a triple}).
When one applies this to the triangle $X_{\pe} \to X \to k$, we get
the following re-interpretation of the $p$-adic Poincar\'{e} sequence mentioned above.
\begin{theorem}[{see~\Cref{rational Poincare sequence} for the precise statement}]
Denote $\nu \colon X_{\pe} \to X$ the natural projection from pro-\'{e}tale site
of $X$ to the analytic site of $X$.
The following sequence in $\wh\DF(\wh\dR^\an_{X_{\pe}/k})$:
\[
0 \ra
\wh\dR^\an_{X_{\pe}/k} \ra \wh\dR^\an_{X_{\pe}/X}
\xrightarrow{\nabla} \wh\dR^\an_{X_{\pe}/X} \otimes_{\nu^{-1}\cO_X} \nu^{-1}\Omega^\an_X
\xrightarrow{\nabla} \ldots 
\xrightarrow{\nabla} \wh\dR^\an_{X_{\pe}/X} \otimes_{\nu^{-1}\cO_X} \nu^{-1}\Omega^{\dim_X, \an}_X
\ra 0
\]
is strict exact, where we give $\nu^{-1}\Omega^{i, \an}_X$ degree $i$.
\end{theorem}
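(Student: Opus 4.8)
The plan is to obtain \Cref{rational Poincare sequence} by specializing the general ``Poincar\'e sequence of a triple'' of \Cref{ddR for a triple} to the triple of sheaves of rings $\nu^{-1}k \to \nu^{-1}\cO_X \to \cO_{X_{\pe}}$ on $X_{\pe}$ (with the analytic and Hodge completions built in), and then recognizing the terms via the comparison isomorphisms $\wh\dR^\an_{X_{\pe}/k} \cong \mathbb{B}_{\dR}^+$ and $\wh\dR^\an_{X_{\pe}/X} \cong \cO\mathbb{B}_{\dR}^+$ of \Cref{BdR+ comparison} and \Cref{OBdR+ comparison}. Concretely, \Cref{ddR for a triple} should equip $\wh\dR^\an_{X_{\pe}/k}$ with a natural, complete, finite decreasing filtration of Katz--Oda type whose $i$-th graded piece is $\wh\dR^\an_{X_{\pe}/X}\otimes_{\nu^{-1}\cO_X}\nu^{-1}\Omega^{i,\an}_X[-i]$; the asserted sequence is then precisely the associated relative de Rham complex of this filtration, with first map the canonical $\wh\dR^\an_{X_{\pe}/k}\to\wh\dR^\an_{X_{\pe}/X}$ and the remaining maps a connection $\nabla$. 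So the two things to pin down are (a) this Katz--Oda filtration together with the computation of its graded pieces, and (b) the identification of $\nabla$ with Scholze's connection; strict exactness will then be a formal consequence on Hodge-graded pieces.

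For (a), I would build the filtration on $\wh\dR^\an_{X_{\pe}/k}$ from the transitivity triangle of cotangent complexes $\nu^{-1}\Omega^{1,\an}_X\otimes_{\nu^{-1}\cO_X}\cO_{X_{\pe}} \to L_{\cO_{X_{\pe}}/\nu^{-1}k} \to L_{\cO_{X_{\pe}}/\nu^{-1}\cO_X}$, whose left-hand term is a vector bundle concentrated in degree zero \emph{because $X/k$ is smooth}; passing to derived exterior powers turns this triangle into a finite filtration, and assembling over all Hodge degrees yields the Katz--Oda filtration with the stated graded pieces. Since $X$ has dimension $\dim_X$, one has $\nu^{-1}\Omega^{i,\an}_X = 0$ for $i>\dim_X$, so the filtration is finite and the de Rham complex is the bounded sequence of length $\dim_X+2$ appearing in the statement. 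It is convenient to carry this out locally on the analytic site, taking $X=\Spa(R,R^+)$ smooth affinoid with an \'etale map to a torus, so that $\nu^{-1}\Omega^{1,\an}_X$ is free on $dT_1,\dots,dT_{\dim_X}$ and both the Katz--Oda filtration and its de Rham complex become an explicit Koszul-type complex.

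For (b), functoriality of the analytic derived de Rham construction in the middle term of the triple produces $\nabla\colon\wh\dR^\an_{X_{\pe}/X}\to\wh\dR^\an_{X_{\pe}/X}\otimes_{\nu^{-1}\cO_X}\nu^{-1}\Omega^{1,\an}_X$ extending $d\colon\cO_X\to\Omega^{1,\an}_X$; it is flat and drops the Hodge filtration by one, which is exactly why assigning $\nu^{-1}\Omega^{i,\an}_X$ degree $i$ makes $\nabla$ a filtered map and gives an object of $\wh\DF(\wh\dR^\an_{X_{\pe}/k})$. Transporting along $\wh\dR^\an_{X_{\pe}/X}\cong\cO\mathbb{B}_{\dR}^+$, this $\nabla$ is determined by the same Leibniz rule, Griffiths transversality, and behavior on generators that characterize Scholze's connection on $\cO\mathbb{B}_{\dR}^+$, hence coincides with it, so the sequence we produce is literally the $p$-adic Poincar\'e sequence of \cite[Corollary 6.13]{Sch13}.

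For strict exactness: all terms carry complete, bounded-below Hodge filtrations and the Katz--Oda filtration is finite, so it suffices to check exactness after applying $\mathrm{gr}_H$. The $n$-th Hodge-graded piece of the sequence is, term by term, $\wedge^n L_{\cO_{X_{\pe}}/\nu^{-1}k}[-n]$ followed by $\wedge^{n-i}L_{\cO_{X_{\pe}}/\nu^{-1}\cO_X}[-(n-i)]\otimes_{\nu^{-1}\cO_X}\nu^{-1}\Omega^{i,\an}_X$ for $i\ge 0$, glued by the $\nu^{-1}\Omega^{*,\an}_X$-linear differentials induced by $\nabla$; this is nothing but the realization, as a complex, of the finite exterior-power filtration of the transitivity triangle above, and is therefore exact (locally on $X$, a concrete Koszul-type assertion). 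The main obstacle is, I expect, twofold: proving \Cref{ddR for a triple} cleanly --- in particular constructing the Katz--Oda filtration compatibly with Hodge completion and with the \emph{analytic} (rather than algebraic) derived de Rham complex --- and keeping honest track of the two interacting filtrations (Hodge and Katz--Oda) in the completed filtered derived category, so that a ``finite Katz--Oda filtration with these graded pieces'' genuinely upgrades to a strict exact sequence and not merely a convergent spectral sequence. Granting \Cref{ddR for a triple}, \Cref{rational Poincare sequence} follows from the above specialization together with the identification of $\nabla$.
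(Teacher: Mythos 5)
Your outline follows the same overall route as the paper---specialize the Katz--Oda formalism of \Cref{ddR for a triple} to the triple $k \to \cO_X \to \wh\cO_X$, compute Hodge-graded pieces from the transitivity triangle of analytic cotangent complexes, and use smoothness of $X/k$ to make the filtration finite so that strictness can be checked on graded pieces---but as written it has two genuine gaps. First, \Cref{ddR for a triple} is an algebraic statement about a triple of ordinary rings, whereas the theorem concerns Hodge-completed \emph{analytic} objects obtained by colimits over rings of definition, derived $p$-completion, inverting $p$, and filtered completion; the Katz--Oda filtration does not formally commute with these operations. You flag this as ``the main obstacle'' and then dispose of it by ``granting \Cref{ddR for a triple}'', but that lemma alone does not give it: the bridge is the content of \Cref{algebraic Poincare mod Filn}, \Cref{padic Poincare mod Filn} and \Cref{Poincare Lemma}, where one first truncates mod $\Fil^j_{\mathrm{H}}$ (so the induced Katz--Oda filtration is finite by \Cref{properties of KO filtration} and survives completion, inverting $p$, and colimits) and only then passes to the limit over $j$. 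Relatedly, your fallback construction---taking derived exterior powers of the transitivity triangle and ``assembling over all Hodge degrees''---only produces a filtration on each Hodge-graded piece of $\wh\dR^\an_{X_\pe/k}$; it does not lift to a filtration on the completed object itself, and without that lift there is no map $\nabla$ of sheaves, no $\wh\dR^\an_{X_\pe/k}$-linearity and no Leibniz rule. The lift is exactly what the relative tensor product $\Fil^i_{\mathrm{KO}} = \dR_{C/A}\otimes_{\dR_{B/A}}\Fil^i_{\mathrm{H}}(\dR_{B/A})$ of \Cref{Katz--Oda filtration}, built from \Cref{mixed filtration} and \Cref{ddR base change formula}, provides.

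Second, the globalization step is missing. The sheaves $\wh\dR^\an_{X_\pe/k}$ and $\wh\dR^\an_{X_\pe/X}$ are defined only by unfolding from the basis $X_{\pe}^\omega$ of affinoid perfectoids lying over affinoid opens of $X$, so the sequence must be produced there, for triples $(k,\cO_k)\to(A,A^+)\to(B,B^+)$, and then unfolded using that unfolding and $\Gr^i$ commute with limits; the relevant localization is pro-\'{e}tale, to this basis, not ``locally on the analytic site with an \'{e}tale map to a torus''. Moreover, to conclude that the displayed sequence of filtered sheaves with plain tensor products is strict exact (rather than merely that some complete filtration with the right graded pieces exists), the paper uses that on this basis all terms sit in cohomological degree $0$ with filtrations given by submodules, and that the wedge powers of $\LL^\an_{X/k}$ are perfect so that $\hat\otimes = \otimes$ (\Cref{ddR aff computation}, \Cref{cohomological bound aff}, \Cref{hypersheaf properties}); none of this appears in your plan. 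Finally, the detour through $\mathbb{B}^+_{\dR}$, $\cO\mathbb{B}^+_{\dR}$ and Scholze's connection is unnecessary for this statement (and \Cref{OBdR+ comparison} occurs later in the paper, its proof using only the affinoid \Cref{Poincare Lemma}); the identification of $\nabla$ with Scholze's connection belongs to \Cref{rational Poincare matches}, not to the proof of the theorem.
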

Hence in this point of view, the connection $\nabla$ defined by Scholze is indeed
an incarnation of the Gauss--Manin connection.

The advantage of our perspective is that one can naturally generalize the above discussion
to singular rigid spaces. Due to some technical issue, so far we have only worked
out the case where the rigid space $X$ is a local complete intersection over $k$
(see the Appendix~\ref{lci discussion} for a brief discussion of the notion ``l.c.i.'' in rigid geometry).
In this singular case, one no longer gets an ordinary sheaf but rather a sheaf in a
derived $\infty$-category satisfying hyperdescent.
In the local complete intersection case, the hypersheaf $\wh\dR^\an_{X_{\pe}/X}$
is cohomologically bounded below by $-(\text{embedded codimension of }X)$.
However, contemplating with the $0$-dimensional situation in~\Cref{artinian example}, we find that
actually this hypersheaf always lives in cohomological degree $0$ in that situation regardless of the
input Artinian $k$-algebra.
This leads to an interesting question that needs further explorations:
\begin{question}[{same as~\Cref{coh bound question}, c.f.~\cite{Bha12complete}}]
In what generality shall we expect $\wh\dR^\an_{X_\pe/X}$ 
to live in cohomological degree $0$?
And when that happens, can we re-interpret the underlying algebra
via some construction similar to Scholze's $\cO\mathbb{B}_{\dR}^+$
as in~\cite{Sch13} and~\cite{SchCor}?
\end{question}

Finally, we remark that we also have worked out a parallel story related to Tan--Tong's
crystalline period sheaves~\cite[Section 2]{TT19}. 
We summarize the result in this direction as follows.
\begin{theorem}[{see~\Cref{crystalline period comparison} and~\Cref{Crystalline Poincare sequence}
for the precise statements}]
Let $k$ be an absolutely unramified $p$-adic field, with ring of integers $\cO_k$,
and let $\scrX$ be a smooth formal scheme over $\cO_k$.
Denote by $w \colon X_\pe \to \scrX$ the natural projection from the pro-\'{e}tale site
of the rigid generic fiber $X$ of $\scrX$ to the Zariski site of $\scrX$.
Then we have natural filtered isomorphisms:
\[
\mathbb{A}_\cry \cong \dR_{\wh\cO_X^+/\cO_k}^\an \text{ and }
\cO\mathbb{A}_\cry \cong \dR_{\wh\cO_X^+/\cO_\scrX}^\an.
\]
Moreover the following sequence in $\wh\DF(\dR_{\wh\cO_X^+/\cO_k}^\an)$:
\[
0 \ra
\dR_{\wh\cO_X^+/\cO_k}^\an \to \dR_{\wh\cO_X^+/\cO_\scrX}^\an
\xrightarrow{\nabla} \dR_{\wh\cO_X^+/\cO_\scrX}^\an \otimes_{w^{-1}\cO_\scrX }
w^{-1}\Omega^{1,\an}_\scrX \xrightarrow{\nabla} \ldots \xrightarrow{\nabla} 
\dR_{\wh\cO_X^+/\cO_\scrX}^\an \otimes_{w^{-1}\cO_\scrX }
w^{-1}\Omega^{d,\an}_\scrX
\ra 0
\]
is strict exact,
where $d$ is the relative dimension of $\scrX/\cO_k$
and $w^{-1}\Omega^{i,\an}_\scrX$ is given degree $i$.
\end{theorem}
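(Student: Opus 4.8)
The plan is to run the argument in close parallel with the de Rham statements \Cref{BdR+ comparison}, \Cref{OBdR+ comparison} and \Cref{rational Poincare sequence}, with $p$-adic completion everywhere playing the role of Hodge completion, and with Bhatt's computation of $A_\cry$ (\Cref{Bhatt example}) replacing Beilinson's computation of $B_{\dR}^+$. All the objects appearing in the two asserted isomorphisms are (hyper)sheaves of filtered $\dR^\an_{\wh\cO_X^+/\cO_k}$-modules on $X_\pe$, so it is enough to produce natural filtered identifications over a basis. Since $\scrX/\cO_k$ is smooth I would work Zariski-locally and assume $\scrX = \Spf A^\circ$ with $A^\circ$ étale over $\cO_k\langle T_1,\dots,T_d\rangle$; inside $X_\pe$ I would then restrict to affinoid perfectoid $U = \Spa(R,R^+)$ lying over the pro-\'etale tower extracting $p$-power roots of the $T_i$ (and of $p$), so that $\wh\cO_X^+(U) = R^+$ is a perfectoid ring with $(w^{-1}\cO_\scrX)(U) = A^\circ$. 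The remaining local task is to identify, compatibly with filtrations, $\dR^\an_{R^+/\cO_k}$ with $\mathbb{A}_\cry(U)$ and $\dR^\an_{R^+/A^\circ}$ with $\cO\mathbb{A}_\cry(U)$.

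For the first identification I would use that $\cO_k = W(k_0)$ is formally \'etale over $\mathbb{Z}_p$, so that $\dR^\an_{R^+/\cO_k}\simeq\dR^\an_{R^+/\mathbb{Z}_p}$, and that $R^+$ is perfectoid, so that $\LL_{\mathbb{A}_{\mathrm{inf}}(R^+)/\mathbb{Z}_p}$ vanishes $p$-adically and $\theta\colon\mathbb{A}_{\mathrm{inf}}(R^+)\twoheadrightarrow R^+$ has kernel generated by a nonzerodivisor. Transitivity then collapses the base to $\mathbb{A}_{\mathrm{inf}}(R^+)$, and the computation behind \Cref{Bhatt example} identifies the result with the $p$-completed divided-power envelope of $\ker\theta$:
\[
\dR^\an_{R^+/\cO_k}\;\simeq\;\dR^\an_{R^+/\mathbb{Z}_p}\;\simeq\;\dR^\an_{R^+/\mathbb{A}_{\mathrm{inf}}(R^+)}\;\simeq\;A_\cry(R^+)\;=\;\mathbb{A}_\cry(U),
\]
concentrated in degree $0$, with the Hodge filtration matching the divided-power filtration; the last equality is Tan--Tong's definition of the sheaf $\mathbb{A}_\cry$. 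This is the integral counterpart of Beilinson's Poincar\'e lemma and is essentially free once \Cref{Bhatt example} is in hand.

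For the second identification I would feed the triple $\cO_k\to A^\circ\to R^+$ into the derived de Rham base change formula $\dR_{C/A}\otimes_{\dR_{B/A}}B\simeq\dR_{C/B}$ in its $p$-completed form, producing a filtered identification $\dR^\an_{R^+/A^\circ}\simeq\dR^\an_{R^+/\cO_k}\,\wh{\otimes}_{\dR^\an_{A^\circ/\cO_k}}\,A^\circ$ with $\dR^\an_{A^\circ/\cO_k}\to A^\circ$ the canonical augmentation. Using the coordinates $T_i$ one computes $\dR^\an_{A^\circ/\cO_k}$ and must then check that this base change reproduces Tan--Tong's completed divided-power polynomial algebra $\cO\mathbb{A}_\cry(U)=\mathbb{A}_\cry(U)\langle V_1,\dots,V_d\rangle^{\wedge}$ together with its connection, where the variable $V_i$ corresponds to $T_i\otimes 1-1\otimes[T_i^\flat]$, an element lying in a divided-power ideal. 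The key subtlety, and the place where the integral story genuinely departs from the de Rham one, is that precisely because the $V_i$ live in a divided-power ideal, $p$-adic completion by itself already makes the relevant divided-power power series converge, so one lands in $\dR^\an$ with no Hodge completion --- exactly mirroring the fact that $A_\cry$, unlike $B_{\dR}^+$, is not Hodge complete. Sheafifying the two local identifications over the basis then gives the asserted filtered isomorphisms, and along the way exhibits $\dR^\an_{\wh\cO_X^+/\cO_k}$ and $\dR^\an_{\wh\cO_X^+/\cO_\scrX}$ as ordinary sheaves concentrated in cohomological degree $0$ (the embedded codimension being $0$ since $\scrX$ is smooth).

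Finally, the Poincar\'e sequence should simply be the strict exact sequence attached by the $p$-completed form of \Cref{ddR for a triple} to the triple $\cO_k\to w^{-1}\cO_\scrX\to\wh\cO_X^+$ on $X_\pe$, after identifying $\Omega^i_{w^{-1}\cO_\scrX/\cO_k}$ with $w^{-1}\Omega^{i,\an}_\scrX$ (immediate from smoothness of $\scrX/\cO_k$). Strict exactness is checked on the Hodge-graded pieces, where in each weight the sequence becomes the exact complex governing the behaviour of derived exterior powers along the transitivity triangle $w^{-1}\Omega^{1,\an}_\scrX\otimes_{w^{-1}\cO_\scrX}\wh\cO_X^+\to\LL_{\wh\cO_X^+/\cO_k}\to\LL_{\wh\cO_X^+/w^{-1}\cO_\scrX}$; this is exact because $w^{-1}\Omega^{1,\an}_\scrX$ is finite locally free over $w^{-1}\cO_\scrX$, and $p$-completion preserves the exactness on the $p$-completely flat terms that occur. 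Rewriting this sequence through the two comparison isomorphisms above produces the displayed one. I expect the main obstacle to lie in the second comparison: reconciling the intrinsic base-changed derived de Rham complex with Tan--Tong's hands-on completed divided-power polynomial algebra-with-connection, and certifying that $p$-adic rather than Hodge completion is the right one, with the divided-power and Hodge filtrations agreeing under it.
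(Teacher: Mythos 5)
Your treatment of the first isomorphism and of the Poincar\'{e} sequence is essentially the paper's: collapsing the base along relatively perfect maps (here $\cO_k\to\mathbb{A}_{inf}(R^+)$, using that the residue field is perfect) and invoking the identification of $\dR^\an$ with a $p$-completed PD envelope gives $\dR^\an_{R^+/\cO_k}\cong A_\cry(R^+)$ exactly as in \Cref{integral local calculation}(1), and the Katz--Oda filtration for $\cO_k\to w^{-1}\cO_\scrX\to\wh\cO_X^+$ gives the strict exact sequence as in \Cref{smooth formal Poincare} and \Cref{Crystalline Poincare sequence}.

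The gap is in the second comparison, and you have in fact flagged it yourself as ``the main obstacle'': identifying $\dR^\an_{R^+/A^\circ}$ with Tan--Tong's $\cO\mathbb{A}_\cry(U)$ is precisely the step your proposal does not carry out, and the route you sketch makes it genuinely hard. You propose to compute $\dR^\an_{R^+/A^\circ}$ as the ($p$-completed) base change $\dR^\an_{R^+/\cO_k}\,\wh\otimes_{\dR^\an_{A^\circ/\cO_k}}A^\circ$ (\Cref{ddR base change formula}) and then match it in coordinates with the completed PD polynomial algebra. But the tensor product is taken over the non-discrete filtered $E_\infty$-ring $\dR^\an_{A^\circ/\cO_k}$ via a Bar construction, and in the integral setting you cannot reduce to Hodge-graded pieces, since neither side is Hodge-complete ($A_\cry$ is not); so showing the result is discrete, is the $p$-completed divided power envelope, and carries the PD filtration as its Hodge filtration requires the full strength of the derived de Rham versus PD envelope comparison anyway. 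The paper avoids this entirely by a different change of base: apply \Cref{dR when relatively perfect}(2) and (4) to the triangle $A_0\to A_0\hat\otimes_{\cO_k}\mathbb{A}_{inf}(B^+)\to B^+$, where the first map is relatively perfect mod $p$, so that $\dR^\an_{B^+/A_0}\cong\dR^\an_{B^+/A_0\hat\otimes_{\cO_k}\mathbb{A}_{inf}(B^+)}$ is, by Bhatt's and Illusie's comparison, exactly the $p$-completed PD envelope of $\ker(w^\sharp\otimes\theta)$ with Hodge filtration equal to the PD filtration --- which is Tan--Tong's definition of $\cO\mathbb{A}_\cry$ on the nose. With that identification the sheaf-level statement reduces to bookkeeping the paper also does and you omit: both sides are unfoldings of the same PD-envelope presheaf, checked modulo $p^n$ (which also disposes of the completed versus uncompleted tensor in $\cO\mathbb{A}_{inf}$) and then passing to the limit, and the two connections are matched by the $\mathbb{A}_\cry$-linearity of $\nabla$ together with the computation $\nabla(u_i)=1\otimes dT_i$ on both sides. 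So the architecture of your proposal is sound, but its central step is missing the key idea (the relatively perfect enlargement of the base to $A_0\hat\otimes_{\cO_k}\mathbb{A}_{inf}(B^+)$) that makes the comparison with $\cO\mathbb{A}_\cry$ essentially definitional rather than a delicate computation.
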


We want to mention that in our situation, 
we mostly care about the analytic derived de Rham complex for a map of adic spaces $X\ra Y$, 
where $X$ is a perfectoid space and $Y$ is a rigid space (or their integral analogues).
The analytic derived de Rham complex for a map of rigid spaces have been studied independently in~\cite{Ant20}
and a forthcoming article~\cite{Guo} by the first named author.

Let us give a brief summary of the content of the following sections.
In~\Cref{Notation and Conventions} we explain notation and conventions used in this paper,
and we give a brief discussion of relevant facts about filtered derived $\infty$-categories
and sheaves in them.
In~\Cref{Integral theory} and~\Cref{Rational Theory} we work out,
in a parallel way, the realizations of Scholze's and Tan--Tong's period sheaves.
In both sections, we first introduce the relevant algebraic construction,
then discuss the Poincar\'{e} sequence, and finally globalize (or sheafify) these constructions
and show that they are (essentially) the same as aforementioned period sheaves.
In Appendix~\ref{lci discussion} we make a primitive discussion
of local complete intersections in rigid geometry.

\subsection*{Acknowledgement}
We are grateful to Bhargav Bhatt for suggesting this project to us
as well as many discussions related to it.
We thank David Hansen heartily for listening to our project,
drawing our attention to Tan--Tong's period sheaves,
and sharing excitement with the second named author.
The second named author would also like to thank Johan de Jong
for discussions surrounding the notion of local complete intersections
in rigid geometry and suggesting the proof of a Lemma.
The first named author is partially funded by Department of Mathematics, University of Michigan, 
and by NSF grant DMS 1801689 through Bhargav Bhatt.

\section{Notation and Conventions}
\label{Notation and Conventions}
\subsection{Notation}
We fix $k$ to be a complete discretely valued $p$-adic field with a perfect residue field, 
and let $\cO_k$ be its ring of integers.
Denote by $\Spa(k)$ to be the adic spectrum $\Spa(k,\cO_k)$. 

Anything with the superscript decoration $(-)^\an$ will mean a suitably $p$-completed version
of the classical object $(-)$.
The sense in which we are taking $p$-completion of these objects shall be clear from the context.

The tensor products $\otimes$ appear in this article, if not otherwise specified, 
always denote derived tensor products.
Similarly, the completed tensor products appear always indicate derived completion
of the derived tensor product (with respect to suitable filtrations
to be specified in each case).

\subsection{Filtrations}
\label{Filtration convention}
Many objects we are dealing with in this article are viewed as objects either in
the filtered derived $\infty$-category $\DF(R) \coloneqq \Fun(\mathbb{N}^\op,\scrD(R))$ 
or in the full derived $\infty$-subcategory $\wh\DF(R) \subset \DF(R)$ 
consisting of objects that are derived complete with respect to the filtration,
for some ring $R$ which should be clear from the context.
For a brief introduction of these, we refer readers to~\cite[Subsection 5.1]{BMS2}.

We need a notion of~\emph{step sequence functor}, which is perhaps a non-standard terminology.
Given an integer $i \in \mathbb{N}$, we have a functor $\Gr^i \colon \DF(R) \ra \scrD(R)$
sending a filtered object to its $i$-th graded piece.
This functor has a right adjoint which we call~\emph{the $i$-th step sequence functor} 
and denote it by $\st_i \colon \scrD(R) \ra \DF(R)$.
Concretely, the value of $\st_i(C)$ on $j$ is given by
\[
C_j = 
\begin{cases}
C;~0 \leq j \leq i;\\
0;~else.
\end{cases}.
\]

Let $\cC$ be a stable $\infty$-category,
for example $\cC$ could be $\scrD(R)$, $\DF(R)$ or $\wh\DF(R)$ for a discrete ring $R$.
	Consider a sequence of objects in $\cC$
	\[
	A_0 \xrightarrow{d_0} A_1 \xrightarrow{d_1} A_2 \xrightarrow{d_2} \ldots 
	\]
	such that $d_{i+1}\circ d_i=0$. 
	If there exists an object $L$ in the filtered $\infty$-category $\Fun(\NN^\op, \cC)$, 
	satisfying the following conditions
	\begin{itemize}
		\item $L(0)=A_0$;
		\item $L(i)/L(i+1) \cong A_{i+1}[-i]$;
		\item the natural map $L(0) \to L(0)/L(1)$ is identified with $d_0$;
		\item the natural connecting map of graded pieces $L(i)/L(i+1)\ra L(i+1)/L(i+2)[1]$ is isomorphic to $d_{i+1}[-i]$,
	\end{itemize}
then we say the sequence is \emph{witnessed by the filtration $L$ on $A_0$}.
The notion is an $\infty$-analogue of a complex in the chain complex category.

When $\cC=\DF(R)$, then $L$ can be regarded as an object 
$G(\bullet, \bullet) \in \Fun((\NN \times \NN)^\op, \scrD(R))$, 
where we use the convention that we denote the first coordinate by $i$, the second coordinate
by $j$, and $L(i) = G(i,0)$.
In this setting, we say the filtration $L(\bullet)$ on $A_0$ 
is \emph{strict exact} if for any $j \in \NN$,
the object $G(0,j)$ is complete with respect to the filtration $G(i,j)$.
Assume all of the $A_{i} = G(i-1,0)/G(i,0)[i-1]$ are cohomologically
supported in degree $0$ with filtrations (coming from the second coordinate)
given by actual $R$-submodules.
Then the sequence of $A_i$'s above
can be thought of as a sequence of ordinary filtered $R$-modules,
and our notion of strict exactness defined here agrees with the classical notion 
of strict exactness of a sequence of filtered $R$-modules.


\subsection{Sheaves and hypersheaves}
Here we give a quick review about sheaves in $\infty$-category.

Let $X$ be a site, and let $\scrC$ be a presentable $\infty$-category.
The \ic of presheaves in $\scrC$, denoted as $\mathrm{PSh}(X,\scrC)$, 
is defined to be the \ic $\Fun(X^\op,\scrC)$ of contravariant functors from $X$ to $\scrC$.
The \ic $\mathrm{PSh}(X,\scrC)$ admits a full sub \ic $\Sh(X,\scrC)$ of \emph{(infinity) sheaves in $\scrC$},
consisting of functors $\cF:X^\op \ra \scrC$ 
that send (finite) coproducts to products and satisfy the descent along \v{C}ech nerves: 
for any covering $U'\ra U$ in $X$, 
the natural morphism to the limit below is required to be a weak equivalence
\[
\cF(U) \rra \lim_{[n]\in \Delta^\op} \cF(U'_n), \tag{$\ast$}
\]
where $U'_\bullet \ra U$ is the \v{C}ech nerve associated with the covering $U'\ra U$.
Here we note that this is the $\infty$-categorical analogue of the classical sheaf condition in ordinary categories.
	
There is a stronger descent condition which requires $(\ast)$ above to hold
with respect to all \emph{hypercovers} $U'_\bullet \ra U$ in the site $X$.
Sheaves satisfying such stronger condition are called \emph{hypersheaves}.
For example, given any bounded below complex $C$ of ordinary sheaves on a site $X$,
the assignment $U \mapsto \mathrm{R}\Gamma(U, C)$
gives rise to a hypersheaf.
The collection of hypersheaves in $\scrC$ forms a full 
sub-\ic $\Sh^{{\mathrm{hyp}}}(X,\scrC)$ inside $\Sh(X,\scrC)$.

\begin{remark}\label{Rmk, equiv}
Let $\scrC=\scrD(R)$ be the derived $\infty$-category of $R$-modules.
Then the $\infty$-category $\Sh^{\mathrm{hyp}}(X,\scrC)$ of hypersheaves over $X$ is in fact equivalent to the derived $\infty$-category $\scrD(X,R)$ of classical sheaves of $R$-modules over $X$, by \cite[Corollary 2.1.2.3]{Lu18}.
Here the functor $\scrD(X,R) \ra \Sh^{\mathrm{hyp}}(X,\scrC)$ 
associates a complex of ordinary sheaves $C$ with the functor
\[
U \mapsto R\Gamma(U, C),~\forall~U\in X.
\]
As an upshot, the underlying homotopy category of $\Sh^{\mathrm{hyp}}(X,\scrC)$ is the classical derived category of sheaves of $R$-modules over $X$.
In particular, given a hypersheaf $\cF$ of $R$-modules over $X$, we can always represent it by an actual complex of sheaves of $R$-modules.
\end{remark}

\subsection{Unfold a hypersheaf}
\label{unfolding}
There is a way to define a hypersheaf on a site $X$ via unfolding from a basis, 
c.f.~\cite[Proposition 4.31]{BMS2} and the discussion after it.

Let $X$ be a site and let $\mathcal{B}$ be a basis of $X$,
namely $\mathcal{B}$ is a subcategory of $X$ such that for each object $U$ in $X$, 
there exists an object $U'$ in $\mathcal{B}$ covering $U$.
So any hypercover of an object in $X$ can be refined to a hypercover with each term in $\mathcal{B}$.
Let $\scrC$ be a presentable $\infty$-category.

Let $\cF\in \Sh^{\mathrm{hyp}}(\mathcal{B},\scrC)$ be a hypersheaf on $\mathcal{B}$.
We can then \emph{unfold} the sheaf $\cF$ to a hypersheaf $\cF'$ on $X$, 
such that its evaluation at any $V\in X$ is given by
\[
\cF'(V)=\underset{U'_\bullet\ra V}{\colim}\varprojlim_{[n]\in \Delta^\op} \cF(U'_n),
\]
where the colimit is indexed over all hypercovers $U'_\bullet\ra V$ with $U'_n\in \mathcal{B}$ for all $n$.
It can be shown that one hypercover suffices to compute the value of $\cF'(V)$ in the above formula:
actually for a hypercover $U'_\bullet\ra V$ with each $U'_n$ in the basis $\mathcal{B}$, 
we have a natural weak-equivalence
\[
\lim_{[n]\in \Delta^\op} \cF(U'_n) \rra \cF'(V).
\]
In particular for any $U \in \mathcal{B}$, the natural map  $\cF(U) \rra \cF'(U)$ is a weak-equivalence. 

The above construction is functorial with respect to $\cF\in \Sh^{\mathrm{hyp}}(\mathcal{B}, \scrC)$, 
and we get a natural unfolding functor
\[
\Sh^{\mathrm{hyp}}(\mathcal{B},\scrC) \rra \Sh^{\mathrm{hyp}}(X, \scrC),
\]
which is in fact an equivalence, with the inverse given by the restriction functor 
$\Sh^{\mathrm{hyp}}(X,\scrC)\ra \Sh^{\mathrm{hyp}}(\mathcal{B},\scrC)$.

\section{Integral theory}
\label{Integral theory}

\subsection{Affine construction}

In this subsection we define analytic cotangent complex and analytic 
derived de Rham complex for a morphism of $p$-adic algebras.
We refer readers to~\cite[Sections 2 and 3]{Bha12} for general background of the derived de Rham
complex in a $p$-adic situation.

\begin{construction}[Integral constructions]
\label{int construction}
Let $A_0\ra B_0$ be a map of $p$-adically complete algebras over $\cO_k$, 
and $P$ be the standard polynomial resolution of $B_0$ over $A_0$.
	
We define the \emph{analytic cotangent complex of $A_0\ra B_0$,} 
denoted as  $\LL_{B_0/A_0}^\an$, to be the derived $p$-completion of the complex $\Omega_{P/A_0}^1\otimes_{P} B_0$ of $B_0$-modules.

Next we denote $(|\Omega_{P/A_0}^*|,\Fil^*)$ 
the direct sum totalization of the simplicial complex 
$\Omega_{P/A_0}^*$ together with its Hodge filtration, as an object in $\Fun(\NN^\op,\Ch(A_0))$.
As the de Rham complex of a simplicial ring admits a commutative differential graded algebra structure, 
we may regard $|\Omega_{P/A_0}^*|$ with its Hodge filtration as an object in $\CAlg(\Fun(\NN^\op, \Ch(A_0)))$.
Then the \emph{analytic derived de Rham complex of $B_0/A_0$}, 
denoted as $\dR_{B_0/A_0}^\an$ in the $\CAlg(\DF(A_0))$, is defined as the derived $p$-completion of the filtered cdga 
$(|\Omega_{P/A_0}^*|,\Fil^*)$.

\end{construction}

\begin{remark}
By construction, the graded pieces of the derived Hodge filtrations of 
$\dR_{B_0/A_0}^\an$ are given by
$$\Gr^i(\dR_{B/A}^\an) \cong (\rmL\wedge^i\LL_{B/A})^\an[-i],$$
where $\rmL \wedge^i$ denotes the $i$-th left derived wedge product, 
c.f.~\cite[Construction 4.1]{Bha12complete}.
\end{remark}

Let us establish some properties of this construction before discussing any example.

\begin{lemma}
\label{general commutative diagram for a triangle}
Let $A \to B \to C$ be a triple of rings, then we have a commutative diagram
of filtered $E_\infty$ algebras:
\[
\xymatrix{
\dR_{B/A} \ar[d] \ar[r] & \dR_{C/A} \ar[d] \\
B \ar[r] & \dR_{C/B},
}
\]
where the left arrow is the projection to $0$-th graded piece of the derived Hodge filtration,
and the other three arrows come from functoriality of the construction of derived de Rham complex.
\end{lemma}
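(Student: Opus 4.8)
The plan is to build the diagram at the level of explicit cosimplicial/simplicial polynomial resolutions and then pass to de Rham complexes, using the functoriality of the classical de Rham complex construction together with base change for Kähler differentials. First I would choose a standard simplicial polynomial resolution $P_\bullet \to B$ of $B$ over $A$, and a standard simplicial polynomial resolution $Q_\bullet \to C$ of $C$ over $B$; by functoriality of the bar construction one can arrange a compatible simplicial polynomial resolution $R_\bullet \to C$ of $C$ over $A$ together with maps of simplicial $A$-algebras $P_\bullet \to R_\bullet$ and a map $R_\bullet \to Q_\bullet$ (the latter over the augmentation $P_\bullet \to B$). Applying the de Rham complex functor $T \mapsto (|\Omega^*_{T/A}|,\Fil^*)$, which is a functor valued in $\CAlg(\Fun(\NN^\op,\Ch(A)))$, to the maps $A \to P_\bullet \to R_\bullet$ produces the top row $\dR_{B/A} \to \dR_{C/A}$, and the left-hand vertical map $\dR_{B/A}\to B$ is the projection to $\Fil^0/\Fil^1 = \Gr^0$, i.e. the augmentation $|\Omega^*_{P_\bullet/A}| \to |\cO_{P_\bullet}| \simeq B$ of filtered $E_\infty$-algebras.

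Next I would produce the right-hand vertical map $\dR_{C/A} \to \dR_{C/B}$: the map $R_\bullet \to Q_\bullet$ is a map of simplicial rings over $A\to B$, and base change of differentials gives a map of filtered cdgas $|\Omega^*_{R_\bullet/A}| \to |\Omega^*_{Q_\bullet/B}|$ (factoring through $|\Omega^*_{R_\bullet/A}|\otim_{?}$ — more precisely, $\Omega^*_{R_\bullet/A}\to \Omega^*_{R_\bullet/B}$ followed by the resolution map $R_\bullet\to Q_\bullet$), which on realizations is exactly $\dR_{C/A}\to\dR_{C/B}$ by definition of the derived de Rham complex. The bottom map $B \to \dR_{C/B}$ is the unit/structure map of the filtered $E_\infty$-$B$-algebra $\dR_{C/B}$. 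Commutativity of the square then reduces to commutativity at the level of these explicit (bi)simplicial polynomial algebras and their de Rham complexes, where it is essentially formal: both composites $\dR_{B/A}\to \dR_{C/A}\to\dR_{C/B}$ and $\dR_{B/A}\to B\to\dR_{C/B}$ are induced by the evident maps of simplicial rings $P_\bullet \to R_\bullet \to Q_\bullet$ versus $P_\bullet \to B \to Q_\bullet$, which agree up to simplicial homotopy; applying the de Rham functor and realizing turns this homotopy into the required equivalence of maps of filtered $E_\infty$-algebras.

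I expect the main obstacle to be purely homotopy-coherence bookkeeping rather than anything conceptually deep: one must check that the de Rham complex construction, as a functor from simplicial $A$-algebras to $\CAlg(\Fun(\NN^\op,\Ch(A)))$, is genuinely (lax) symmetric monoidal and functorial enough that a commutative square of simplicial rings (up to coherent homotopy) yields a commutative square of filtered $E_\infty$-algebras, and that the choices of polynomial resolutions can be made functorially (e.g. via the canonical free resolution / bar construction) so that the maps $P_\bullet\to R_\bullet$, $R_\bullet\to Q_\bullet$ exist on the nose. Since the standard (comonadic) polynomial resolutions are strictly functorial in the ring maps, and the classical de Rham complex of a polynomial algebra is strictly functorial with its Hodge filtration, this can be arranged; the only care needed is that derived $p$-completion is a symmetric monoidal localization, so it preserves the $E_\infty$-algebra structures and the commutativity of the square. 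I would therefore present the argument by: (i) fixing functorial simplicial resolutions and recording the maps between them; (ii) noting that $|\Omega^*_{(-)/A}|$ sends this diagram of simplicial rings to the desired diagram of filtered cdgas over $A$; (iii) invoking that $\dR^\an$ is the derived $p$-completion of this, which is symmetric monoidal, hence the diagram persists in $\CAlg(\DF(A))$.
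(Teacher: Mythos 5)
Your proposal is correct and is essentially the paper's argument: the paper simply observes that the square is evident (and strict) when $B$ is a polynomial $A$-algebra and $C$ a polynomial $B$-algebra, and then left Kan extends, which is exactly what your explicit standard resolutions and homotopy-lifting bookkeeping implement by hand. The only superfluous step is the final appeal to derived $p$-completion, since the lemma concerns the uncompleted $\dR$; the completed statement is taken later, in the proof of \Cref{dR when relatively perfect}.
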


\begin{proof}
This follows from left Kan extension of
the case when $B$ is a polynomial $A$-algebra and $C$ is a polynomial
$B$-algebra.
\end{proof}

The following is the key ingredient in understanding the analytic derived de Rham complex
in situations that are interesting to us.

\begin{theorem}
\label{dR when relatively perfect}
Let $A \to B \to C$ be ring homomorphisms of $p$-completely flat $\mathbb{Z}_p$-algebras,
such that $A/p \to B/p$ is relatively perfect (see~\cite[Definition 3.6]{Bha12}).
Then we have
\begin{enumerate}
\item $\LL_{B/A}^\an = 0$, and $\dR_{B/A}^\an = B$;
\item The natural map $\dR_{C/A}^\an \to \dR_{C/B}^\an$ is an isomorphism;
\item We have a commutative diagram:
\[
\xymatrix{
\dR_{B/A}^\an \ar[d]^{\cong} \ar[r] & \dR_{C/A}^\an \ar[d]^{\cong} \\
B \ar[r] & \dR_{C/B}^\an.
}
\]
\item Assume furthermore that $B \to C$ is surjective with kernel $I$
and $B/p \to C/p$ is a local complete intersection, 
then the natural map $B \to \dR_{C/B}^\an$ exhibits the latter as
$D_{B}(I)^\an$,
the $p$-adic completion of the PD envelope of $B$ along $I$.
Moreover the $p$-adic completion of the PD filtrations $\Fil^r = I^{[r], \an}$
are identified with the $r$-th Hodge filtration.
\end{enumerate}
\end{theorem}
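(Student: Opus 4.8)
The plan is to prove the four statements in order, bootstrapping from (1) to the rest. Throughout, I would reduce to the non-completed (classical) derived de Rham complex using the compatibility of derived $p$-completion with left Kan extension and with the Hodge filtration (so that the graded pieces are the $p$-completed derived wedge powers of the cotangent complex, as recorded in the Remark after \Cref{int construction}). Since all rings in sight are $p$-completely flat over $\mathbb{Z}_p$, a map of derived $p$-complete objects is an isomorphism as soon as it is so after $\otimes^{\mathbb{L}}_{\mathbb{Z}_p}\mathbb{Z}/p$; this is the reduction I would use repeatedly.

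\emph{Part (1).} By \cite[Lemma 3.7]{Bha12} (or the definition of relatively perfect), $\Omega^1_{(B/p)/(A/p)} = 0$ and $\mathbb{L}_{(B/p)/(A/p)} = 0$; since $\mathbb{L}_{B/A}^\an \otimes^{\mathbb{L}}_{\mathbb{Z}_p} \mathbb{Z}/p \cong \mathbb{L}_{(B/p)/(A/p)}$ (base change for the cotangent complex, using $p$-complete flatness), derived Nakayama gives $\mathbb{L}_{B/A}^\an = 0$. Feeding this into the Remark after \Cref{int construction}, all positive Hodge graded pieces $\Gr^i(\dR^\an_{B/A}) \cong (\mathbb{L}\wedge^i \mathbb{L}_{B/A})^\an[-i]$ vanish for $i \geq 1$, while $\Gr^0 = B$; since $\dR^\an_{B/A}$ is Hodge-complete by construction, the filtration collapses and $\dR^\an_{B/A} = B$ as filtered $E_\infty$-algebras.

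\emph{Parts (2) and (3).} For (2), I would use the base-change formula for derived de Rham cohomology along a triple $A \to B \to C$, namely $\dR_{C/A} \otimes_{\dR_{B/A}} B \cong \dR_{C/B}$ (stated in the introduction; it follows by left Kan extension from the polynomial case, where it is the classical transitivity of Kähler differentials). Completing $p$-adically and plugging in $\dR^\an_{B/A} = B$ from (1) gives $\dR^\an_{C/A} \cong \dR^\an_{C/B}$; one checks this identification is the natural map and is filtered by comparing Hodge graded pieces, which reduces to the transitivity triangle $\mathbb{L}_{B/A} \otimes_B C \to \mathbb{L}_{C/A} \to \mathbb{L}_{C/B}$ whose left term vanishes after $p$-completion by (1). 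Part (3) is then just the combination of (1), (2), and \Cref{general commutative diagram for a triangle}: the square of \emph{loc. cit.}, after $p$-completion, has both horizontal-to-vertical comparison maps invertible, which is exactly the assertion.

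\emph{Part (4).} By (2)–(3) it suffices to identify $\dR^\an_{C/B}$ when $B \to C$ is surjective with kernel $I$ and $B/p \to C/p$ is a local complete intersection. The comparison of the (un-completed) derived de Rham complex of a quotient with the PD envelope is \cite[Proposition 3.25]{Bha12} (the derived-to-classical crystalline comparison), which identifies $\dR_{C/B}$ with the PD envelope $D_B(I)$ together with its PD filtration equal to the Hodge filtration, \emph{provided} $B \to C$ is a complete intersection; left Kan extension plus the l.c.i. hypothesis mod $p$ (together with $p$-complete flatness, so that $I$ is generated by a $p$-completely regular sequence mod $p$) lets me run this after $p$-completion. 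The only genuinely delicate point — and the step I expect to be the main obstacle — is matching the $p$-adically completed PD filtration $I^{[r],\an}$ with the $r$-th Hodge filtration: derived $p$-completion does not obviously commute with taking the PD-filtration steps, so I would argue at the level of graded pieces, showing $\Gr^r$ of the completed Hodge filtration is $(\mathbb{L}\wedge^r \mathbb{L}_{C/B})^\an[-r]$, and that in the l.c.i. case $\mathbb{L}_{C/B} \cong I/I^2[1]$ with $I/I^2$ finite projective over $C$, so $(\mathbb{L}\wedge^r \mathbb{L}_{C/B})^\an[-r] \cong (\Gamma^r_C(I/I^2))^\an$, which is precisely $\Gr^r$ of the $p$-completed divided-power filtration; a compatibility-of-filtrations diagram chase, using that both filtrations are complete, then upgrades the graded identification to a filtered isomorphism.
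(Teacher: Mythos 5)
There is a genuine gap, and it sits at the very first step. Your proof of (1) hinges on the assertion that $\dR^\an_{B/A}$ is ``Hodge-complete by construction,'' so that vanishing of the positive Hodge graded pieces forces the filtration to collapse. But in~\Cref{int construction} the integral analytic derived de Rham complex is only derived $p$-completed; no completion along the Hodge filtration is performed (that completion only enters later, in the rational theory of~\Cref{ddR construction aff}). Indeed Hodge-completeness would be inconsistent with the rest of~\Cref{dR when relatively perfect}: part (4) identifies $\dR^\an_{C/B}$ with the $p$-completed PD envelope $D_B(I)^\an$, not with its completion along the PD filtration, and~\Cref{Bhatt example} identifies $\dR^\an_{\overline{\mathbb{Z}_p}/\mathbb{Z}_p}$ with $A_{\cry}$ rather than a $B_{\dR}^+$-type ring. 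Since the Hodge filtration is decreasing, knowing $\Gr^i=0$ for $i\geq 1$ only gives $\Fil^1=\Fil^2=\cdots$, and without completeness this does not force $\Fil^1=0$; the same objection applies after your (correct) reduction mod $p$, because $\dR_{(B/p)/(A/p)}$ is not Hodge-complete either. The missing idea is the one the paper uses: reduce mod $p$ and appeal to the \emph{conjugate} filtration, which is increasing and exhaustive with graded pieces given by Frobenius twists of $\wedge^i\LL_{(B/p)/(A/p)}[-i]$, so their vanishing really does give $\dR_{(B/p)/(A/p)}=B/p$; this is exactly~\cite[Corollary 3.8]{Bha12}, which also yields (2) directly. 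Your derivation of (2)--(3) from (1) via the base change formula of~\Cref{ddR base change formula} is otherwise a legitimate alternative, but it inherits the gap.

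A secondary instance of the same slip occurs in your part (4): you propose to upgrade the graded identification to a filtered isomorphism ``using that both filtrations are complete,'' but neither the Hodge filtration on $\dR^\an_{C/B}$ nor the PD filtration on $D_B(I)^\an$ is complete. The upgrade is still available, because a filtered map which is an isomorphism on underlying objects (here by~\cite[Theorem 3.27]{Bha12}) and on all graded pieces is a filtered isomorphism (compare $\Fil^r \to \Fil^0 \to \Fil^0/\Fil^r$, the last term having a finite filtration), so completeness is not needed---but you should argue this way rather than via completeness. Finally, the identification of the graded pieces of the $p$-completed PD filtration with $\Gamma^r_C(I/I^2)^\an$, \emph{via the natural comparison map}, is not formal in the non-Noetherian, $p$-complete setting you actually face (e.g.\ $B=A_0\hat\otimes_{\cO_k}A_{inf}(B^+)$): the paper handles this by spreading out to the Noetherian case and invoking~\cite[Corollaire VIII.2.2.8]{Ill72}, a step your sketch leaves unaddressed.
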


Note that by~\cite[Lemma 3.38]{Bha12} $D_B(I)^\an$ is a $p$-complete
flat $\mathbb{Z}_p$-algebra. Hence 
$I^{[r], \an}$, being submodules of a flat $\mathbb{Z}_p$-module,
are also $p$-torsionfree for all $r$.

\begin{proof}
(1) and (2) follow from the proof of~\cite[Corollary 3.8]{Bha12}:
one immediately reduces modulo $p$ and appeals to the conjugate filtration.
(3) follows from~\Cref{general commutative diagram for a triangle} by taking
the derived $p$-completion.

As for (4), we first apply~\cite[Proposition 3.25]{Bha12}
and~\cite[Th\'{e}or\`{e}me V.2.3.2]{Ber74} to see that
there is a natural filtered map 
$\mathscr{C}omp_{C/B} \colon \dR_{C/B}^\an \to D_B(I)^\an$
such that precomposing with $B \to \dR_{C/B}^\an$ gives the natural map
$B = B^\an \to D_B(I)^\an$.
By~\cite[Theorem 3.27]{Bha12} we see that $\mathscr{C}omp_{C/B}$ is
an isomorphism for the underlying algebra.
To show the same holds for filtrations, it suffices to show 
that the induced map on graded pieces are isomorphisms as the map is compatible with
filtrations. To that end, by a standard spread out technique, 
we may reduce to the case
where $B$ is the $p$-adic completion of a finite type $\mathbb{Z}_p$ algebra,
in particular it is Noetherian, in which case the identification
of graded pieces via this natural map follows from a result of
Illusie~\cite[Corollaire VIII.2.2.8]{Ill72}.
\end{proof}

Now we are ready to do some examples.
An inspiring arithmetic example is worked out by Bhatt.
\begin{example}[{\cite[Proposition 9.9]{Bha12}}]
\label{Bhatt example}
There is a filtered isomorphism:
\[
A_{\mathrm{crys}} \cong \dR^\an_{\overline{\mathbb{Z}_p}/\mathbb{Z}_p}.
\]
\end{example}

Let us work out a geometric example below.

\begin{example}
\label{cotangent complex of perfectoid circle}
Let $n$ be a positive integer.
Let $R = \mathbb{Z}_p \langle T_1^{\pm 1}, \ldots, T_n^{\pm 1} \rangle$, and 
$R_{\infty} = \mathbb{Z}_p \langle T_1^{\pm 1/p^{\infty}}, 
\ldots, T_n^{\pm 1/p^{\infty}} \rangle = 
R \langle S_1^{1/p^{\infty}}, \ldots, S_n^{1/p^{\infty}} \rangle/(T_i - S_i; 
1 \leq i \leq n)$.

Applying (derived $p$-completion of) the fundamental triangle of cotangent complexes to
$$
\mathbb{Z}_p \to R \to R_{\infty},
$$
one yields that $\LL_{R_{\infty}/R}^{\an} = R_{\infty} \cdot \{dT_1, \ldots, dT_n\} [1]$.

On the other hand, the fundamental triangle associated with
$$
R \to R \langle S_1^{1/p^{\infty}}, \ldots, S_n^{1/p^{\infty}} \rangle 
\to R_{\infty}
$$
gives us $\LL_{R_{\infty}/R}^{\an} = R_{\infty} \cdot \{T_i - S_i; 1 \leq i \leq n\} [1]$.

The relation between these two presentations of $\LL_{R_{\infty}/R}^{\an}$ is that
\[
T_i - S_i = dT_i
\]
in $\rmH_{1}(\LL_{R_{\infty}/R}^{\an})$,
as $\frac{\partial}{\partial T_i} (T_i - S_i) = 1$.\footnote{
Here we follow the sign conventions in the Stacks Project,
see~\cite[\href{https://stacks.math.columbia.edu/tag/07MC}{Tag 07MC} footnote 1]{Sta}}
\end{example}

Following the above notation,
we describe $\dR^\an_{R_{\infty}/R}$.

\begin{example}
\label{dR of perfectoid circle}
Applying~\Cref{dR when relatively perfect} to $A = R, 
B = R \langle S_1^{1/p^{\infty}}, \ldots, S_n^{1/p^{\infty}} \rangle$ 
and $I = (T_1 - S_1, \ldots, T_n - S_n)$, we see that
$\dR^\an_{R_{\infty}/R} = 
\left( \rmD_{\mathbb{Z}_p \langle T_1^{\pm 1}, \ldots, T_n^{\pm 1}, S_1^{1/p^\infty},
\ldots, S_n^{1/p^\infty} \rangle} (I) \right)^{\an}$ 
is the $p$-adic completion of the PD envelope of
$R \langle S_1^{1/p^\infty}, \ldots, S_n^{1/p^\infty} \rangle$ 
along $I$ (notice that the PD envelope
is $p$-torsion free, hence derived completion agrees with classical completion),
and the Hodge filtrations are ($p$-adically) 
generated by divided powers of $\{T_i - S_i\}$.
\Cref{cotangent complex of perfectoid circle} shows that the image of $(T_i - S_i)$
in $\Gr^1 = \LL_{R_{\infty}/R}^{\an}[-1] = 
R_{\infty} \otimes_{R} \Omega^{1,\an}_{R/\mathbb{Z}_p}$ is identified with 
$1 \otimes dT_i$.
This precise identification will be used later (see~\Cref{rational torus example} 
and the proof of~\Cref{OBdR+ comparison})
when we compare certain rational
version of the analytic derived de Rham complex with Scholze's period sheaf $\cO\BB_{\dR}^{+}$.
\end{example}

\subsection{derived de Rham complex for a triple}
\label{ddR for a triple}
Given a pair of smooth morphisms
$A \to B \to C$, there is a natural Gauss--Manin connection
$\dR_{C/B} \xrightarrow{\nabla} \dR_{C/B} \otimes_B \Omega^1_{B/A}$,
such that $\dR_{C/A}$ is naturally identified with the ``totalization'' of the following
sequence:
\[
\dR_{C/B} \xrightarrow{\nabla} \dR_{C/B} \otimes_B \Omega^1_{B/A} \xrightarrow{\nabla}
\cdots \xrightarrow{\nabla} \dR_{C/B} \otimes_B \Omega^{\dim_{B/A}}_{B/A}.
\]
Katz and Oda~\cite{KO68} observed that this can be explained by a filtration on $\dR_{C/A}$.
In this subsection we shall show how to generalize this to the context
of derived de Rham complex for a pair of arbitrary morphisms $A \to B \to C$.

We first need to introduce a way to attach filtration on a tensor product
of filtered modules over a filtered $E_{\infty}$-algebra.
The following fact about Bar resolution is well-known, 
and we thank Bhargav Bhatt for teaching us in this generality.
\begin{lemma}
\label{Bar resolution}
Let $A$ be an ordinary ring, let $R$ be an $E_{\infty}$-algebra over $A$,
and let $M$ and $N$ be two objects in $\scrD(R)$.
Then the following augmented simplicial object in $\scrD(A)$
\[
\left(
\xymatrix{
\cdots \ar@<1.2ex>[r] \ar@<.4ex>[r] \ar@<-.4ex>[r] \ar@<-1.2ex>[r] &
M \otimes_A R \otimes_A R \otimes_A N \ar@<0.8ex>[r] \ar[r] \ar@<-.8ex>[r] &
M \otimes_A R \otimes_A N \ar@<.4ex>[r] \ar@<-.4ex>[r] &
M \otimes_A N
}
\right) \longrightarrow M \otimes_R N
\]
displays $M \otimes_R N$ as the colimit of the simplicial objects in $\scrD(A)$.
Here the arrows are given by ``multiplying two factors together''.
\end{lemma}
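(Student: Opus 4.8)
The plan is to recognize the displayed augmented simplicial object as the usual two-sided bar construction $\mathrm{Bar}(M, R, N)$ and invoke the standard fact that it computes the relative tensor product. First I would recall that for an $E_\infty$-algebra $R$ over the ordinary ring $A$, and left/right $R$-modules $M, N$ in $\scrD(R)$ (here everything is over a commutative base so the left/right distinction is immaterial), the bar construction is the simplicial object $[n] \mapsto M \otimes_A R^{\otimes_A n} \otimes_A N$ with face maps given by multiplying adjacent factors (the outer ones using the module structures on $M$ and $N$, the inner ones using the multiplication on $R$) and degeneracies given by inserting units. This is exactly the simplicial object written in the statement, and its geometric realization (colimit over $\Delta^{\op}$) is by definition — or by the standard comparison, e.g. \cite[Section 4.4]{Lu-HA} — the relative tensor product $M \otimes_R N$, compatibly with the augmentation to $M \otimes_R N$ obtained by multiplying $M$ against $N$.

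The one point that needs a word of justification is that the tensor products appearing are the derived ones over the \emph{discrete} ring $A$, so that $\otimes_A$ is the symmetric monoidal structure on $\scrD(A)$, and that $R$, $M$, $N$ are viewed as objects of $\scrD(A)$ via restriction of scalars along $A \to R$. Then $\scrD(R)$ is the $\infty$-category of modules over the $E_\infty$-algebra $R$ in $\scrD(A)$, and the claim is precisely the statement that relative tensor products in a presentably symmetric monoidal stable $\infty$-category are computed by the bar resolution. Since $\scrD(A)$ is such a category and $R \in \CAlg(\scrD(A))$, this is immediate from the general theory; I would simply cite it.

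I do not expect a genuine obstacle here — the content is entirely a matter of matching the explicitly written simplicial object with the bar construction and citing the standard colimit formula. The only mild subtlety, worth flagging to the reader, is bookkeeping: one should note that no flatness or discreteness hypothesis on $M$, $N$, or $R$ is needed because all tensor products are derived, and that the resolution is functorial in the pair $(M, N)$, which is what will be used later to transport filtrations through the bar construction term by term. Accordingly the write-up can be a single short paragraph: identify the object as $\mathrm{Bar}_\bullet(M, R, N)$ in $\scrD(A)$, recall that $|\mathrm{Bar}_\bullet(M, R, N)| \simeq M \otimes_R N$ compatibly with the augmentation, and reference \cite[Section 4.4]{Lu-HA}.
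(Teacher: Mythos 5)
Your argument is correct, but it takes a different route from the paper. You identify the displayed simplicial object as the two-sided bar construction and cite the general fact (Lurie, Higher Algebra, \S 4.4) that the relative tensor product in a presentably symmetric monoidal stable $\infty$-category is computed by the geometric realization of $\mathrm{Bar}_\bullet(M,R,N)$ --- which, depending on how one sets up $M\otimes_R N$, makes the statement nearly definitional. The paper instead gives a short self-contained argument: since $\scrD(R)$ is generated under colimits by shifts of $R$ and all the tensor products commute with colimits in $M$ and $N$, one reduces to the case $M=N=R$; there the augmented simplicial object acquires an extra degeneracy (a contracting homotopy $R^{\otimes_A n}\to R^{\otimes_A(n+1)}$ given by inserting the unit $A\to R$), so it is split and hence a colimit diagram. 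The paper's reduction buys two things your citation does not make explicit: it avoids invoking the full machinery of relative tensor products, and it shows the statement already holds for $E_1$-algebras. Your approach is shorter and leans on a standard reference, which is perfectly acceptable, though you should make sure the relative tensor product $M\otimes_R N$ used elsewhere in the paper is literally Lurie's, so that the citation applies without a further comparison; the generation-plus-split-simplicial argument sidesteps that bookkeeping.
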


\begin{proof}
Since the $\infty$-category $\scrD(R)$ is generated by shifts of $R$~\cite[7.1.2.1]{Lu17},
commuting tensor with
colimit, we may assume that both of $M$ and $N$ are just $R$.
In this case, the statement holds for merely $E_1$-algebras,
as we have a null homotopy
$R^{\otimes_A n} \to R^{\otimes_A (n+1)}$ given by tensoring $R^{\otimes_A n}$
with the natural map $A \to R$.
\end{proof}

\begin{construction}
\label{mixed filtration}
Let $A$ be an ordinary ring, let $R$ be a filtered $E_\infty$ algebra over $A$,
and let $M$ and $N$ be two filtered $R$-modules with filtrations compatible
with that on $R$.
Then we regard $M \otimes_R N$ as an object in $\DF(A)$ via
the Bar resolution in~\Cref{Bar resolution}, with
\[
\Fil^i(M \otimes_R N) \coloneqq \colim_{\Delta^{\op}}
\left(
\xymatrix{
\cdots \ar@<1.2ex>[r] \ar@<.4ex>[r] \ar@<-.4ex>[r] \ar@<-1.2ex>[r] &
\Fil^i(M \otimes_A R \otimes_A R \otimes_A N) \ar@<0.8ex>[r] \ar[r] \ar@<-.8ex>[r] &
\Fil^i(M \otimes_A R \otimes_A N) \ar@<.4ex>[r] \ar@<-.4ex>[r] &
\Fil^i(M \otimes_A N)
}
\right),
\]
where the filtrations on $M \otimes_A R \otimes_A \cdots \otimes_A R \otimes_A N$ are given
by the usual Day involution.
\end{construction}

\begin{lemma}
Let $A, R, M, N$ be as in~\Cref{mixed filtration}.
Then we have
\[
\Gr^*(M \otimes_R N) \cong \Gr^*(M) \otimes_{\Gr^*(R)} \Gr^*(N).
\]
\end{lemma}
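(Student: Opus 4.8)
The plan is to reduce the statement to the bar resolution description of relative tensor products proved in \Cref{Bar resolution}, using that the associated graded functor $\Gr^*$ both preserves colimits and is symmetric monoidal for the Day convolution.

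First I would record the two formal inputs that make everything work. By the discussion of the step sequence functors in \Cref{Filtration convention}, each $\Gr^i \colon \DF(A) \to \scrD(A)$ admits a right adjoint $\st_i$, so it is a left adjoint; hence $\Gr^* = (\Gr^i)_{i \in \NN}$ preserves all colimits, in particular geometric realizations of simplicial objects. Second, with respect to the Day convolution symmetric monoidal structure on $\DF(A)$ and the graded tensor product on graded objects, the functor $\Gr^*$ is symmetric monoidal: on graded pieces one has $\Gr^n(F \otimes G) \cong \bigoplus_{a+b=n} \Gr^a(F) \otimes_A \Gr^b(G)$, see e.g.\ \cite[Subsection 5.1]{BMS2}. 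Note also that, $A$ being a discrete ring equipped with the trivial filtration, $\Gr^*(A) = A$ is concentrated in degree $0$, so the Day convolution over $A$ of filtered objects is sent by $\Gr^*$ to the graded tensor product over $A$ of the corresponding graded objects.

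Now I would apply $\Gr^*$ to the defining formula in \Cref{mixed filtration}. Since $M \otimes_R N$ is, by construction, the geometric realization in $\DF(A)$ of the simplicial object whose $n$-simplices carry the Day convolution filtration on $M \otimes_A R^{\otimes_A n} \otimes_A N$, the two inputs above give
\[
\Gr^*(M \otimes_R N) \;\cong\; \colim_{\Delta^\op}\bigl( \Gr^*(M) \otimes_A \Gr^*(R)^{\otimes_A n} \otimes_A \Gr^*(N) \bigr),
\]
with transition maps those of the two-sided bar construction, now formed inside the symmetric monoidal $\infty$-category of graded $A$-modules, in which $\Gr^*(R)$ is an $E_\infty$-algebra and $\Gr^*(M), \Gr^*(N)$ are modules over it.

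Finally I would invoke \Cref{Bar resolution}, whose proof applies verbatim with the discrete base ring $A$ replaced by the symmetric monoidal $\infty$-category of graded $A$-modules (equivalently, this is the standard fact that the relative tensor product over an $E_\infty$-algebra is computed by the two-sided bar construction), to identify the right-hand side with $\Gr^*(M) \otimes_{\Gr^*(R)} \Gr^*(N)$. The only points demanding a little care are the symmetric monoidality of $\Gr^*$ for the Day convolution — which is precisely where the combinatorics of the convolution filtration on the $n$-fold tensor products enters — and the mild upgrade of \Cref{Bar resolution} from a discrete base to the graded setting; I do not expect either to be a genuine obstacle.
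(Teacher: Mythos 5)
Your proposal is correct and follows essentially the same route as the paper: apply $\Gr^*$ to the bar construction of \Cref{mixed filtration}, use that $\Gr^*$ preserves colimits and turns Day convolution into the graded tensor product, and then recognize the resulting simplicial colimit as the bar-construction presentation of $\Gr^*(M) \otimes_{\Gr^*(R)} \Gr^*(N)$. The paper's proof is just a terser version of this same computation.
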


\begin{proof}
We have
\[
\Gr^*(M \otimes_R N) \cong \colim_{\Delta^{\op}}
\left(
\xymatrix{
\cdots \ar@<1.2ex>[r] \ar@<.4ex>[r] \ar@<-.4ex>[r] \ar@<-1.2ex>[r] &
\Gr^*(M \otimes_A R \otimes_A R \otimes_A N) \ar@<0.8ex>[r] \ar[r] \ar@<-.8ex>[r] &
\Gr^*(M \otimes_A R \otimes_A N) \ar@<.4ex>[r] \ar@<-.4ex>[r] &
\Gr^*(M \otimes_A N)
}
\right)
\]
\[\cong \colim_{\Delta^{\op}}
\left(
\xymatrix{
\cdots 
\ar@<0.8ex>[r] \ar[r] \ar@<-.8ex>[r] &
\Gr^*(M) \otimes_A \Gr^*(R) \otimes_A \Gr^*(N) \ar@<.4ex>[r] \ar@<-.4ex>[r] &
\Gr^*(M) \otimes_A \Gr^*(N)
}
\right)
\cong \Gr^*(M) \otimes_{\Gr^*(R)} \Gr^*(N).
\]
\end{proof}

\begin{proposition}
\label{ddR base change formula}
Let $A \to B \to C$ be a triple of rings, then the diagram of filtered $E_\infty$-algebras
in~\Cref{general commutative diagram for a triangle}
induces a filtered isomorphism of filtered $E_{\infty}$-algebras over $B$:
\[
\dR_{C/A} \otimes_{\dR_{B/A}} B \cong \dR_{C/B}.
\]
\end{proposition}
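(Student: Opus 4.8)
First, observe that the commutative square of \Cref{general commutative diagram for a triangle} exhibits both $\dR_{C/A}$ and $B$ as filtered $E_\infty$-algebras over $\dR_{B/A}$, so the universal property of the relative tensor product produces a map of filtered $E_\infty$-algebras over $B$
\[
\phi\colon \dR_{C/A}\otimes_{\dR_{B/A}} B \rra \dR_{C/B},
\]
the source carrying the mixed filtration of \Cref{mixed filtration}. The plan is to show $\phi$ is a filtered isomorphism by reducing to the polynomial case. Every ingredient entering the construction of $\phi$ — the derived de Rham complex with its Hodge filtration, the derived tensor product, and the mixed filtration of \Cref{mixed filtration} (built from Day convolution and geometric realizations, cf.\ \Cref{Bar resolution}) — commutes with left Kan extension, and $\phi$ is natural; so, exactly as in the proof of \Cref{general commutative diagram for a triangle}, we may reduce to the case where $B$ is a polynomial $A$-algebra and $C$ is a polynomial $B$-algebra, in finitely many variables.

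In that case $\dR_{B/A}$, $\dR_{C/A}$, $\dR_{C/B}$ are the honest bounded de Rham complexes $\Omega^\ast_{B/A}$, $\Omega^\ast_{C/A}$, $\Omega^\ast_{C/B}$ with their b\^{e}te ($=$ Hodge) filtrations. The key observation is that the split short exact sequence $0 \to \Omega^1_{B/A}\otimes_B C \to \Omega^1_{C/A} \to \Omega^1_{C/B} \to 0$ induces a bigraded identification $\Omega^\ast_{C/A} \cong \Omega^\ast_{B/A}\otimes_B \Omega^\ast_{C/B}$ compatible with the algebra map $\Omega^\ast_{B/A}\to\Omega^\ast_{C/A}$; in particular $\Omega^\ast_{C/A}$ is free, hence semi-free, as a graded $\Omega^\ast_{B/A}$-module. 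Therefore $\Omega^\ast_{C/A}\otimes_{\Omega^\ast_{B/A}} B$ needs no derived correction and is obtained from $\Omega^\ast_{C/A}$ by killing the augmentation ideal $\Omega^{\geq 1}_{B/A}$. Since the augmentation $\Omega^\ast_{B/A}\to B$ sends $d_{B/A}$ and all the Gauss--Manin contributions appearing in the differential of $\Omega^\ast_{C/A}$ into $\Omega^{\geq 1}_{B/A}$, the induced differential on this quotient is exactly $d_{C/B}$, so $\phi$ is an isomorphism on underlying complexes.

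It remains to match the filtrations. As the Hodge filtration on $B$ (viewed as a complex concentrated in degree $0$) is trivial, with $\Fil^0 = B$ and $\Fil^{\geq 1} = 0$, the bar/Day-convolution description of the mixed filtration collapses to $\Fil^i\bigl(\Omega^\ast_{C/A}\otimes_{\Omega^\ast_{B/A}} B\bigr) = \Omega^{\geq i}_{C/A}\otimes_{\Omega^\ast_{B/A}} B$; under the identification $\Omega^\ast_{C/A} \cong \Omega^\ast_{B/A}\otimes_B\Omega^\ast_{C/B}$ the total b\^{e}te filtration is $\bigoplus_{p+q\geq i}\Omega^p_{B/A}\otimes_B\Omega^q_{C/B}$, and tensoring with $B$ over $\Omega^\ast_{B/A}$ kills the terms with $p\geq 1$, leaving $\bigoplus_{q\geq i}\Omega^q_{C/B} = \Fil^i\Omega^\ast_{C/B}$. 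Hence $\phi$ is a filtered isomorphism in the polynomial case, and therefore in general. As a cross-check, applying $\Gr^\ast$ and invoking the Lemma preceding this Proposition recovers the base-change formula $\Gr^\ast\dR_{C/A}\otimes_{\Gr^\ast\dR_{B/A}} B \cong \Gr^\ast\dR_{C/B}$, which in the polynomial case is the graded analogue of the freeness statement above. The main obstacle is precisely this last step: one must verify carefully that the abstractly defined mixed filtration of \Cref{mixed filtration} really does collapse as asserted, and that left Kan extension is compatible with it — equivalently, that the Day convolutions and geometric realizations in play commute with left Kan extension.
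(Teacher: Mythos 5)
Your reduction to the polynomial case and your identification of the underlying object follow the paper's route, but the filtration-matching step as written is flawed, and it is precisely the step that makes the statement a \emph{filtered} isomorphism. Two problems. First, the mixed filtration of \Cref{mixed filtration} is not the ``all weight on the first factor'' filtration: at each level of the bar complex the Day convolution distributes the filtration degree over \emph{every} tensor factor, so nothing identifies $\Fil^i(\dR_{C/A}\otimes_{\dR_{B/A}}B)$ with $\Omega^{\geq i}_{C/A}\otimes_{\Omega^*_{B/A}}B$; you assert this collapse and then flag it as needing careful verification, but as stated it cannot be verified because it is false. Second, even granting that identification, the claim that ``tensoring with $B$ over $\Omega^*_{B/A}$ kills the terms with $p\geq1$'' fails at the boundary: $\Omega^{\geq1}_{B/A}\cdot\Omega^{\geq i}_{C/A}$ only contains the summands with $p\geq1$ and $p+q\geq i+1$, so the quotient retains the extra terms $\Omega^p_{B/A}\otimes_B\Omega^q_{C/B}$ with $p\geq1$, $p+q=i$. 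Concretely, take $C=B=A[x]$ and $i=1$: your formula gives $\Omega^{\geq1}_{B/A}\otimes_{\Omega^*_{B/A}}B\cong\Omega^1_{B/A}\neq0$, whereas $\Fil^1\Omega^*_{C/B}=0$. (Also, the truncation $\Omega^{\geq i}_{C/A}$ is no longer free over $\Omega^*_{B/A}$, so derived correction terms would have to be addressed as well.)

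The conclusion is nevertheless true, and your closing ``cross-check'' is in fact the proof, and is what the paper does: you already have a filtered map $\phi$, its underlying map is an isomorphism (your second paragraph is fine), and by the Lemma preceding the Proposition one has $\Gr^*(\dR_{C/A}\otimes_{\dR_{B/A}}B)\cong\Omega^*_{C/A}\otimes_{\Omega^*_{B/A}}B\cong\Omega^*_{C/B}$ as graded algebras in the polynomial case --- here the \emph{full} graded module $\Omega^*_{C/A}$ is free over $\Omega^*_{B/A}$, so the boundary phenomenon above does not occur degreewise. Since a filtered map that is an equivalence on underlying objects and on all graded pieces is a filtered equivalence (as $\Fil^i$ is the fibre of $\Fil^0\to\Fil^0/\Fil^i$), $\phi$ is a filtered isomorphism. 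So the repair is to replace your direct computation of the mixed filtration by this graded-pieces criterion; the rest of your argument (naturality of $\phi$, left Kan extension to reduce to polynomial algebras, the underlying-complex computation) stands and matches the paper.
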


Here the left hand side is equipped with the filtration in~\Cref{mixed filtration}
with the Hodge filtrations on $\dR_{C/A}$ and $\dR_{B/A}$, and $\Fil^i(B) = 0$
for $i \geq 1$.
The right hand side is equipped with the Hodge filtration.
Denote $\Omega^*_{B/A} \coloneqq \oplus_i \st_i(\rmL\wedge^i\LL_{B/A})[-i]$
the graded algebra associated with the Hodge filtration.

\begin{proof}
After cofibrant replacing $B$ by a simplicial polynomial $A$-algebra
and $C$ by a simplicial polynomial $B$-algebra, we reduce the statement
to the case where $B$ is a polynomial $A$-algebra and $C$ is a polynomial $B$-algebra.
One verifies directly that in this case we have
\[
\dR_{C/A} \otimes_{\dR_{B/A}} B \cong \dR_{C/B} \text{ and }
\Omega^*_{C/A} \otimes_{\Omega^*_{B/A}} B \cong \Omega^*_{C/B}.
\]
Now we finish proof by recalling that a filtered morphism with isomorphic
underlying object is a filtered isomorphism if and only if the induced
morphisms of graded pieces are isomorphisms.
\end{proof}

\begin{construction}
\label{Katz--Oda filtration}
Let $A \to B \to C$ be a triple of rings, then we put a filtration on $\dR_{C/A}$
by the following:
$L(i) = \dR_{C/A} \otimes_{\dR_{B/A}} \Fil^i_{\mathrm{H}}(\dR_{B/A})$,
viewed as a commutative algebra object in $\Fun(\NN^\op, \DF(A)) = 
\Fun((\NN \times \NN)^\op, \scrD(A))$,
where the filtration on $L(i)$ is as in~\Cref{mixed filtration} with each factor
being equipped with its own Hodge filtrations.
We have $L(0) \cong \dR_{C/A}$, and we call $L(i)$ \emph{the $i$-th Katz--Oda filtration
on $\dR_{C/A}$}, and we shall denote it by $\Fil^i_{\mathrm{KO}}(\dR_{C/A})$.
\end{construction}

We caution readers that each $\Fil^i_{\mathrm{KO}}(\dR_{C/A})$ is equipped
with yet another filtration, we shall still call it the Hodge filtration,
the index is often denoted by $j$.
The graded pieces of the Katz--Oda filtration when both arrows in $A \to B \to C$ are smooth
were studied by Katz--Oda~\cite{KO68}, although in a different language, hence the name.
\begin{lemma}
\label{properties of KO filtration}
Let $A \to B \to C$ be a triple of rings, then
\begin{enumerate}
\item We have a filtered isomorphism
\[
\Gr^i_{\mathrm{KO}}(\dR_{C/A}) \cong \dR_{C/B} \otimes_B
\st_i((\rmL\wedge^i\LL_{B/A})[-i]).
\]
\item Under the above filtered isomorphism, the Katz--Oda filtration on $\dR_{C/A}$
witnesses the following sequence:
\[
\dR_{C/A} \to \dR_{C/B} \xrightarrow{\nabla} \dR_{C/B} \otimes_{B} \st_1(\LL_{B/A})
\xrightarrow{\nabla} \cdots
\]
Here $\nabla$ denotes connecting homomorphisms,
which is $\dR_{C/A}$-linear and satisfies Newton--Leibniz rule.
\item The induced Katz--Oda filtration on $\Gr^j_{\mathrm{H}}(\dR_{C/A})$ is complete.
In fact $\Fil^i_{\mathrm{KO}}\Gr^j_{\mathrm{H}}(\dR_{C/A}) = 0$
whenever $i > j$.
\item If $A \to B$ is smooth of equidimension $d$, then 
$\Fil^i_{\mathrm{KO}}\Fil^j_{\mathrm{H}}(\dR_{C/A}) \cong 0$ for any $i> d$.
In particular, combining with the previous point, we get that in this situation 
the Katz--Oda filtration is strict exact in the sense of~\Cref{Filtration convention}.
\end{enumerate} 
\end{lemma}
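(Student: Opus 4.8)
The plan is to reduce everything to the base change formula \Cref{ddR base change formula} together with the compatible filtrations on the two sides, and then to verify the four assertions essentially by computing graded pieces. First I would establish (1): by construction $\Gr^i_{\mathrm{KO}}(\dR_{C/A})$ is the $i$-th graded piece (with respect to the $i$-index) of $L(\bullet) = \dR_{C/A} \otimes_{\dR_{B/A}} \Fil^\bullet_{\mathrm H}(\dR_{B/A})$, so by the general identity $\Gr^*(M \otimes_R N) \cong \Gr^*(M) \otimes_{\Gr^*(R)} \Gr^*(N)$ (the Lemma preceding \Cref{ddR base change formula}), applied with respect to the $i$-index while keeping the Hodge ($j$-)filtration, we get
\[
\Gr^i_{\mathrm{KO}}(\dR_{C/A}) \cong \dR_{C/A} \otimes_{\dR_{B/A}} \Gr^i_{\mathrm H}(\dR_{B/A}).
\]
Now $\Gr^i_{\mathrm H}(\dR_{B/A}) \cong (\rmL\wedge^i\LL_{B/A})[-i]$ is a $B$-module (placed in the step sequence $\st_i$ for the $j$-filtration), so $\dR_{B/A}$ acts through its $0$-th graded piece $B$, and combining with \Cref{ddR base change formula} we obtain $\dR_{C/A} \otimes_{\dR_{B/A}} B \cong \dR_{C/B}$; this yields the claimed $\dR_{C/B} \otimes_B \st_i((\rmL\wedge^i\LL_{B/A})[-i])$, with the residual Hodge filtration matching up because the base change isomorphism is filtered.

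For (2), the point is just to unwind the definition of ``witnessed by a filtration'' from \Cref{Filtration convention}: $L(0) = \dR_{C/A}$, the quotients $L(i)/L(i+1)$ compute $\Gr^i_{\mathrm{KO}}$ which by (1) is $\dR_{C/B} \otimes_B \st_i(\LL_{B/A}^{\wedge i})[-i]$, i.e. $(\dR_{C/B} \otimes_B \rmL\wedge^i\LL_{B/A})[-i]$ up to the step-sequence bookkeeping, matching the $i$-th term of the displayed sequence. The connecting maps $L(i)/L(i+1) \to L(i+1)/L(i+2)[1]$ are, by functoriality of the Bar construction in \Cref{mixed filtration}, induced by the de Rham differential on $\dR_{B/A}$ tensored up; on $\pi_0$ when all arrows are smooth this is the classical Gauss--Manin connection of \cite{KO68}, and in general the $\dR_{C/A}$-linearity and the Leibniz rule follow from the multiplicative structure on $L(\bullet) \in \CAlg(\Fun((\NN\times\NN)^\op,\scrD(A)))$ noted in \Cref{Katz--Oda filtration}. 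I would phrase this as: $L(\bullet)$ is a filtered commutative algebra whose associated graded is the de Rham complex of $B/A$ with coefficients in $\dR_{C/B}$, and the witnessed sequence is read off from that.

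Assertions (3) and (4) are vanishing statements. For (3): $\Gr^j_{\mathrm H}(\dR_{C/A}) \cong (\rmL\wedge^j\LL_{C/A})[-j]$, and the induced Katz--Oda filtration on it has graded pieces $\Gr^i_{\mathrm{KO}}\Gr^j_{\mathrm H}(\dR_{C/A}) \cong \Gr^j_{\mathrm H}\left(\Gr^i_{\mathrm{KO}}\dR_{C/A}\right)$; using (1) this is $\Gr^j$ of $\dR_{C/B} \otimes_B \st_i((\rmL\wedge^i\LL_{B/A})[-i])$, which by the transitivity triangle for wedge powers of cotangent complexes ($\LL_{B/A} \to \LL_{C/A}$) is supported in Hodge-degrees $\geq i$; hence it vanishes for $i > j$, giving both completeness and the sharper vanishing. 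For (4): if $A \to B$ is smooth of equidimension $d$, then $\LL_{B/A} = \Omega^1_{B/A}$ is locally free of rank $d$, so $\rmL\wedge^i\LL_{B/A} = \Omega^i_{B/A} = 0$ for $i > d$, whence $\Gr^i_{\mathrm{KO}}(\dR_{C/A}) = 0$ for $i > d$ by (1), and therefore $\Fil^i_{\mathrm{KO}}\Fil^j_{\mathrm H}(\dR_{C/A}) \cong \Fil^{i+1}_{\mathrm{KO}}\Fil^j_{\mathrm H}(\dR_{C/A}) \cong \cdots \cong 0$ for $i > d$ by completeness (part (3)) and the vanishing of all higher graded pieces; combined with (3) this is exactly strict exactness in the sense of \Cref{Filtration convention}. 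The main obstacle I anticipate is not any single computation but rather keeping the bookkeeping of the two nested filtrations (the $i$-index Katz--Oda filtration and the $j$-index Hodge filtration) straight when invoking \Cref{ddR base change formula} and the graded-pieces lemma — in particular making sure that ``$\Gr^*(M\otimes_R N) = \Gr^*M \otimes_{\Gr^* R}\Gr^* N$'' is applied with respect to the correct index and that the step-sequence functors $\st_i$ interact correctly with tensoring over $\dR_{B/A}$ versus over $B$; once that is set up carefully, (3) and (4) are formal.
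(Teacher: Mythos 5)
Parts (1) and (2) of your proposal are essentially the paper's own argument: you identify $\Gr^i_{\mathrm{KO}}(\dR_{C/A}) \cong \dR_{C/A} \otimes_{\dR_{B/A}} \Gr^i_{\mathrm{H}}(\dR_{B/A})$, factor the action through $B$, invoke \Cref{ddR base change formula}, and get linearity and the Leibniz rule for $\nabla$ from the multiplicativity of the Katz--Oda filtration (the paper makes this last point precise with a diagram comparing $\Fil^{i+j}/\Fil^{i+j+2}$ of the Day convolution with that of $\dR_{C/A}$, which you should spell out, but the idea is the same).

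In (3) and (4), however, there is a genuine gap: in both places you infer vanishing of \emph{filtration steps} from vanishing of \emph{graded pieces}, which requires completeness of the filtration in question --- and that completeness is precisely what these two parts are supposed to supply (it is the content of strict exactness), so the argument as written is circular. Concretely, in (3) the vanishing of $\Gr^{i}_{\mathrm{KO}}\Gr^j_{\mathrm{H}}(\dR_{C/A})$ for $i>j$ only shows the tower $i \mapsto \Fil^i_{\mathrm{KO}}\Gr^j_{\mathrm{H}}(\dR_{C/A})$ is eventually constant; it neither forces the constant value to be zero nor ``gives completeness.'' What you should do is compute the filtration step itself: $\Gr^j_{\mathrm{H}}\Fil^i_{\mathrm{KO}}(\dR_{C/A})$ is, by the graded-pieces lemma for \Cref{mixed filtration}, the degree-$j$ part of $\Gr^*(\dR_{C/A}) \otimes_{\Gr^*(\dR_{B/A})} \Gr^*(\Fil^i_{\mathrm{H}}\dR_{B/A})$, and the last factor is concentrated in degrees $\geq i$ while the others sit in degrees $\geq 0$, so it vanishes for $i>j$; equivalently, the induced Katz--Oda filtration on $\Gr^j_{\mathrm{H}}(\dR_{C/A}) \cong (\rmL\wedge^j\LL_{C/A})[-j]$ is the \emph{finite} filtration coming from the transitivity triangle $\LL_{B/A}\otimes_B C \to \LL_{C/A} \to \LL_{C/B}$ and its exterior powers, which is the paper's one-line proof. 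In (4) your appeal to ``completeness (part (3))'' is a mis-citation: (3) concerns the induced filtration on $\Gr^j_{\mathrm{H}}(\dR_{C/A})$, not on $\Fil^j_{\mathrm{H}}(\dR_{C/A})$, and the Katz--Oda filtration on the latter (or on $\dR_{C/A}$ itself) is not known to be complete at that stage. The correct, and much shorter, argument is the paper's: for $B/A$ smooth of equidimension $d$ one has $\Fil^i_{\mathrm{H}}(\dR_{B/A}) = 0$ for $i>d$, so by the very definition in \Cref{Katz--Oda filtration}, $\Fil^i_{\mathrm{KO}}(\dR_{C/A}) = \dR_{C/A}\otimes_{\dR_{B/A}}\Fil^i_{\mathrm{H}}(\dR_{B/A})$ vanishes outright, and with it every $\Fil^i_{\mathrm{KO}}\Fil^j_{\mathrm{H}}(\dR_{C/A})$; no completeness input is needed.
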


\begin{proof}
For (1): we have
\[
\Gr^i_{\mathrm{KO}}(\dR_{C/A}) \cong \dR_{C/A} \otimes_{\dR_{B/A}}
\st_i(\rmL\wedge^i\LL_{B/A})[-i] \cong (\dR_{C/A} \otimes_{\dR_{B/A}} B) \otimes_B
\st_i(\rmL\wedge^i\LL_{B/A})[-i],
\]
and by~\Cref{ddR base change formula} the right hand side can be identified with
$\dR_{C/B} \otimes_B \st_i(\rmL\wedge^i\LL_{B/A})[-i]$.

For (2): we just need to show the properties of these $\nabla$'s.
With any multiplicative filtration on an $E_\infty$-algebra $R$,
we get a natural filtered map $\Fil^i \otimes_R \Fil^j \to \Fil^{i+j}(R)$
where the left hand side is equipped with the Day convolution filtration (over the
underlying algebra $R$).
Now we look at the following commutative diagram:
\[
\xymatrix{
(\Gr^{i} \otimes_R \Gr^{j+1}) \oplus (\Gr^{i+1} \otimes_R \Gr^j) \ar[r] \ar[d] &
\Fil^{i+j}/\Fil^{i+j+2}(\Fil^i \otimes_R \Fil^j) \ar[r] \ar[d] &
\Gr^i \otimes_R \Gr^j \xrightarrow{+1} \ar[d] \\
\Gr^{i+j+1} \ar[r] &
\Fil^{i+j}/\Fil^{i+j+2}(R) \ar[r] &
\Gr^{i+j} \xrightarrow{+1}
}
\]
to conclude that the connecting morphisms are $R$-linear and satisfy
Newton--Leibniz rule. 
Since $\Fil^i_{\mathrm{KO}}$ is a multiplicative filtration on $\dR_{C/A}$,
we get the desired properties of $\nabla$.

(3) follows from the distinguished triangle of cotangent complexes and their
exterior powers.

(4) follows from the definition of the Katz--Oda filtration in~\Cref{Katz--Oda filtration} 
and the fact that $\Fil^i_{\mathrm{H}}(\dR_{B/A}) = 0$ whenever $i > d$.
\end{proof}

We do not need the following construction in this paper,
but mention it for the sake of completeness of our discussion.
\begin{construction}
We denote the graded algebra associated with the Hodge filtration on derived de Rham
complex by $\rmL\Omega^*_{-/-}$.\footnote{We warn readers that this is not a standard
notation, in other literature the symbol $\rmL\Omega$ is often used to denote
the derived de Rham complex.}
Let $A \to B \to C$ be a triple of rings. 
Note that 
$\rmL\Omega^*_{C/A} \cong \rmL\wedge^*_{C}(\st_1(\LL_{C/A}))[-*]$,
and we have a functorial filtration
$\LL_{B/A} \otimes_B C \to \LL_{C/A}$ with quotient being $\LL_{C/B}$.
Hence there is a functorial multiplicative exhaustive increasing filtration
on $\rmL\Omega^*_{C/A}$,
called the \emph{vertical filtration} and denoted by $\Fil^v_i$, 
consisting of graded-$\rmL\Omega^*_{B/A}$-submodules
with graded pieces given by
$\Gr^v_i = \rmL\Omega^*_{B/A} \otimes_B \st_i(\rmL\wedge^{i}\LL_{C/B})[-i]$.
\end{construction}

Let us summarize the picture of (the graded pieces of) these filtrations in the following
diagram:
\[
\xymatrix{
  & \vdots \ar@{-}[d] & C  &  & \st_1(\LL_{C/B})[-1]  &   & 
  \st_2(\wedge^2_C \LL_{C/B})[-2]  &  \cdots  \\
\ar@{-}[r] & \vdots \ar@{-}[d] \ar@{-}[l] \ar@{-}[r] & \ar@{-}[l]  \ar@{-}[r]  & \ar@{-}[r] & \ar@{-}[r] \ar@{.}[ld]  & \ar@{-}[r] & \ar@{-}[r] \ar@{.}[ld]   &    \\
B  & \vdots \ar@{-}[d] &  M_0 \otimes_B N_0  & \ar@{.}[ld] &  M_0 \otimes_B N_1  & \ar@{.}[ld]  &  M_0 \otimes_B N_2 & \ar@{.}[ld]  \cdots  \\
 & \vdots \ar@{-}[d] & \ar@{-}[l] \ar@{.}[ld]  \ar@{-}[r]  & \ar@{-}[r] & \ar@{-}[r] \ar@{.}[ld]  & \ar@{-}[r] & \ar@{-}[r] \ar@{.}[ld]   &    \\
\st_1(\LL_{B/A})[-1] & \vdots \ar@{-}[d]  &  M_1 \otimes_B N_0   & \ar@{.}[ld] &  
M_1 \otimes_B N_1   & \ar@{.}[ld] &  M_1 \otimes_B N_2 & \ar@{.}[ld]  \cdots  \\
 & \vdots \ar@{-}[d] &  \ar@{-}[l] \ar@{-}[r] \ar@{.}[ld]  &  \ar@{-}[r] &  \ar@{-}[r] \ar@{.}[ld] &  \ar@{-}[r] & \ar@{.}[ld]  \ar@{-}[r] &    \\
\st_2(\wedge^2_B \LL_{B/A})[-2] & \vdots \ar@{-}[d] &  M_2 \otimes_B N_0   & \ar@{.}[ld] &  M_2 \otimes_B N_1  & \ar@{.}[ld] & M_2 \otimes_B N_2 & \ar@{.}[ld]  \cdots  \\
  \vdots    &  & \vdots &    &  \vdots  &   &  \vdots  &       \\
}
\]
In the diagram above, $M_i=\st_i(\wedge^i_B\LL_{B/A})[-i]$, and $N_j=\st_j(\wedge^j_C\LL_{C/B})[-j]$, for $i,j\in \NN$.
Let us explain this diagram: it is describing graded pieces of filtrations
on $\dR_{C/A}$.
Here the rows are representing graded pieces of the Katz--Oda filtration,
and the dotted lines are indicating the Hodge filtration 
(given by things below the dotted line).
Once we take graded pieces with respect to the Hodge filtration, then the vertical
filtration is literally induced by vertical columns, 
starting from left to right, hence the name.

Specializing to the $p$-adic setting, we get the following.

\begin{lemma}
\label{smooth formal Poincare}
Let $A \to B \to C$ be a triangle of $p$-complete flat $\mathbb{Z}_p$-algebras.
Suppose $B/p$ is smooth over $A/p$ of relative equidimension $n$.
Then we have a $p$-adic Katz--Oda filtration on $\dR_{C/A}$ which is strict exact
and witnesses the following sequence:
$$
0 \to \dR_{C/A}^\an \to \dR_{C/B}^\an \xrightarrow{\nabla} \dR_{C/B}^\an
\otimes_B \st_1(\Omega^{1,\an}_{B/A})
\xrightarrow{\nabla} \cdots \xrightarrow{\nabla} 
\dR_{C/B}^\an \otimes_B \st_n(\Omega^{n,\an}_{B/A}) \to 0.
$$
\end{lemma}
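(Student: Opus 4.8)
The plan is to deduce this from the non-completed Katz--Oda story in \Cref{properties of KO filtration} by taking derived $p$-completions throughout, and to verify that the hypothesis ``$B/p$ smooth over $A/p$ of relative equidimension $n$'' gives us exactly the vanishing we need to conclude strict exactness after completion. First I would form the Katz--Oda filtration $\Fil^\bullet_{\mathrm{KO}}(\dR_{C/A})$ of \Cref{Katz--Oda filtration} and then apply derived $p$-completion to the bifiltered object $G(\bullet,\bullet)$ recording both the Katz--Oda index $i$ and the Hodge index $j$; call the result $\dR^\an_{C/A}$ with its completed Katz--Oda filtration. Because derived $p$-completion is exact (it is the totalization against the Postnikov tower of $\mathbb{Z}/p^n$, or more simply it preserves fiber sequences), each graded piece of the completed filtration is the derived $p$-completion of the corresponding graded piece before completion. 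By \Cref{properties of KO filtration}(1) this is $\left(\dR_{C/B} \otimes_B \st_i\big((\rmL\wedge^i \LL_{B/A})[-i]\big)\right)^\an$, and I would identify this with $\dR^\an_{C/B} \otimes_B \st_i(\rmL\wedge^i\LL^\an_{B/A})[-i]$, using that $p$-completed tensor products are by definition the derived completion of the derived tensor, together with the base-change compatibility built into the construction of $\dR^\an$ in \Cref{int construction}.

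Next I would bring in the smoothness hypothesis. Since $B/p$ is smooth over $A/p$ of relative equidimension $n$, the mod-$p$ cotangent complex $\LL_{(B/p)/(A/p)}$ is a finite projective $(B/p)$-module in degree $0$ of rank $n$, so $\rmL\wedge^i\LL_{(B/p)/(A/p)} = \Omega^i_{(B/p)/(A/p)}$ vanishes for $i > n$; by derived Nakayama (everything in sight is $p$-complete with bounded-above homotopy), $\rmL\wedge^i\LL^\an_{B/A} = \Omega^{i,\an}_{B/A}$ is a finite projective $B$-module concentrated in degree $0$ for $0 \le i \le n$ and vanishes for $i > n$. This gives two things at once: the $i$-th graded piece of the completed Katz--Oda filtration is $\dR^\an_{C/B}\otimes_B \st_i(\Omega^{i,\an}_{B/A})$ for $0 \le i \le n$ and is zero for $i > n$, which shows the completed filtration is finite (so automatically complete) and identifies the witnessed sequence with the displayed one; and, invoking \Cref{properties of KO filtration}(4) with $d = n$ before completion, we already know $\Fil^i_{\mathrm{KO}}\Fil^j_{\mathrm{H}}(\dR_{C/A}) \cong 0$ for $i > n$, a property preserved by $p$-completion, which is precisely the statement that each $G(0,j)$ is complete with respect to the $G(i,j)$, i.e.\ strict exactness in the sense of \Cref{Filtration convention}. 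Finally the connecting maps are the $\nabla$'s of \Cref{properties of KO filtration}(2), which are $\dR^\an_{C/A}$-linear and satisfy the Leibniz rule since these properties are formal consequences of multiplicativity of the filtration and are inherited under $p$-completion; the extra $0$'s at the two ends of the sequence reflect $L(0) = \dR^\an_{C/A}$ and the vanishing of $\Gr^i_{\mathrm{KO}}$ for $i > n$.

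The main obstacle I anticipate is bookkeeping the two filtrations simultaneously under $p$-completion: one must be careful that ``derived $p$-completion'' of the object $G(\bullet,\bullet) \in \Fun((\NN\times\NN)^{\op},\scrD(A))$ is computed levelwise and that the resulting object still has the structure of a filtered commutative algebra in $\wh\DF$, so that multiplicativity (hence the Leibniz rule for $\nabla$) survives. A secondary point requiring care is the flatness hypotheses: one needs $\dR^\an_{C/B}$ and the modules $\Omega^{i,\an}_{B/A}$ to be well-behaved enough (e.g.\ $p$-torsion free, or at least that the relevant graded pieces are discrete) so that the abstract notion of strict exactness from \Cref{Filtration convention} really does match the classical picture; this is where the $p$-complete flatness of $A$, $B$, $C$ and the remark following \Cref{dR when relatively perfect} get used. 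Everything else is a matter of unwinding definitions and transporting the already-established facts (1)--(4) of \Cref{properties of KO filtration} across the exact functor of derived $p$-completion.
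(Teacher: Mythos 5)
Your proposal follows the same route as the paper: the paper's proof is literally to take the derived $p$-completion of the Katz--Oda filtration and cite \Cref{properties of KO filtration}, and your write-up is a (much more detailed) unwinding of exactly that step, with the correct identification of the completed graded pieces and the correct use of reduction mod $p$ to see that $\Omega^{i,\an}_{B/A}$ is finite projective in degree $0$ and vanishes for $i>n$.

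One step is misstated, though the surrounding argument shows you have the right fix in hand: you cannot invoke \Cref{properties of KO filtration}(4) \emph{before} completion, because its hypothesis is that $A \to B$ itself is smooth, whereas here only $B/p \to$ (sorry, $A/p \to B/p$) is smooth --- integrally, $\LL_{B/A}$ and hence $\Fil^{i}_{\mathrm{H}}(\dR_{B/A})$ for $i>n$ need not vanish, so the claim ``$\Fil^i_{\mathrm{KO}}\Fil^j_{\mathrm{H}}(\dR_{C/A})\cong 0$ for $i>n$ before completion'' is not justified. The vanishing should instead be obtained \emph{after} $p$-completion, by the same device you already use for the cotangent complex: derived de Rham complexes and their Hodge filtrations commute with derived base change, so $\Fil^i_{\mathrm{KO}}\Fil^j_{\mathrm{H}}(\dR_{C/A})\otimes^{\mathrm{L}}_{\mathbb{Z}}\mathbb{Z}/p$ is computed from $\Fil^i_{\mathrm{H}}(\dR_{(B/p)/(A/p)})$, which vanishes for $i>n$ by smoothness of $A/p \to B/p$ of relative dimension $n$; derived Nakayama then kills the $p$-completion. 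With that substitution the finiteness of the completed Katz--Oda filtration in the index $i$ (for each Hodge index $j$), hence strict exactness in the sense of \Cref{Filtration convention}, and the identification of the witnessed sequence all go through as you describe.
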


Recall that the superscript $(-)^\an$ denotes the derived $p$-completion
of the corresponding objects.
Note that since $\Omega^{i,\an}_{B/A}$ are all finite flat $B$-modules by assumption
and $\dR_{C/B}^\an$ is $p$-complete, the tensor products showing above
are already $p$-complete.

\begin{proof}
Take the derived $p$-completion of the Katz--Oda filtration on $\dR_{C/A}$, 
we get such a strict exact filtration by~\Cref{properties of KO filtration}.
\end{proof}

\subsection{Integral de Rham sheaves}
\label{Integral de Rham sheaves}

For the rest of this section, we focus on the situation spelled out by the following:
\subsubsection*{\textbf{Notation}}
Let $\kappa$ be a perfect field in characteristic $p>0$, 
and let $k=W(\kappa)[\frac{1}{p}]$ be the absolutely unramified discretely valued 
$p$-adic field with the ring of integers $\cO_k=W(\kappa)$.
Fix a separated formally smooth $p$-adic formal schemes $\scrX$ over $\cO_k$.
Denote by $X$ its generic fiber, viewed as an adic space over the Huber pair $(k,\cO_k)$.

In this situation, there is a natural map of ringed sites 
\[
w \colon (X_\pe,\wh\cO_X^+)\rra (\scrX,\cO_\scrX)
\]
which sends an open subset $\scrU\subset \scrX$ to the open subset $U\in X_\pe$, 
where $U$ is the generic fiber of $\scrU$.
This allows us to define inverse image $w^{-1}\cO_\scrX$ of the integral structure sheaf 
$\cO_\scrX$, as a sheaf on the pro-\'etale site $X_\pe$.

On the pro-\'{e}tale site of $X$, we have a morphism of sheaves of $p$-complete $\cO_k$-algebras:
\[
\label{integral triangle}
\tag{\epsdice{1}}
\cO_k \rra w^{-1}\cO_\scrX \rra \wh\cO_X^+.
\]
We refer readers to~\cite[Sections 3 and 4]{Sch13} 
for a detailed discussion surrounding the pro-\'{e}tale
site of a rigid space and structure sheaves on it.
There is a subcategory $X_{\pe/\scrX}^\omega \subset X_\pe$ consisting of affinoid
perfectoid objects $U = \Spa(B,B^+)\in X_\pe$ whose image in $X$ is contained in $w^{-1}(\Spf(A_0))$,
the generic fiber of an affine open $\Spf(A_0) \subset \scrX$.
The class of such objects form a basis for the pro-\'{e}tale topology
by (the proof of)~\cite[Proposition 4.8]{Sch13}.
We first study the behavior of derived de Rham complex for the triangle~\cref{integral triangle}
on $X_{\pe/\scrX}^\omega$.

\begin{proposition}
\label{integral local calculation}
Let $U = \Spa(B,B^+) \in X_\pe$ be an object in $X_{\pe/\scrX}^\omega$,
choose $\Spf(A_0) \subset \scrX$ such that the image of $U$ in $X$ is contained in $w^{-1}(\Spf(A_0))$.
Then
\begin{enumerate}
    \item the natural surjection $\theta \colon A_{inf}(B^+) \twoheadrightarrow B^+$
    exhibits $\dR_{B^+/\cO_k}^\an = A_{\cry}(B^+)$, 
    the $p$-completion of the divided envelope of $A_{inf}(B^+)$
    along $\ker(\theta)$;
    \item the natural surjection $w^\sharp \otimes \theta \colon 
    A_0 \hat{\otimes}_{\cO_k} A_{inf}(B^+) \twoheadrightarrow B^+$
    exhibits $\dR_{B^+/A_0}^\an$ as the $p$-completion of the divided envelope
    of $A_0 \hat{\otimes}_{\cO_k} A_{inf}(B^+)$ along $\ker(w^\sharp \otimes \theta)$;
    \item in both cases, the Hodge filtrations are identified as the $p$-completion
    of PD filtrations;
    \item the filtered algebra $\dR_{B^+/A_0}^\an$ is independent of the choice
    of $A_0$. We denote it as $\dR_{B^+/\scrX}^\an$.
\end{enumerate}
\end{proposition}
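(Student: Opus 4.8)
\emph{Plan.} The idea is to deduce all four assertions from~\Cref{dR when relatively perfect}, applied to the two triples
\[
\cO_k \rra A_{inf}(B^+) \rra B^+ \qquad \text{and} \qquad A_0 \rra A_0\wh\otimes_{\cO_k}A_{inf}(B^+) \rra B^+,
\]
in which the last map is $\theta$, respectively $w^\sharp\otimes\theta$, and is surjective because $\theta$ already is. First I would assemble the standing inputs. The ring $B^+$ is $p$-torsion free by the construction of $\wh\cO_X^+$ (cf.~\cite{Sch13}); $A_{inf}(B^+)=W((B^+)^\flat)$ is $p$-completely flat over $\mathbb{Z}_p$ since $(B^+)^\flat$ is a perfect $\kappa$-algebra; $A_0$ is $p$-completely flat over $\mathbb{Z}_p$ since it is formally smooth over $\cO_k$; hence $A_0\wh\otimes_{\cO_k}A_{inf}(B^+)$ is too, with reduction modulo $p$ equal to $A_0/p\otimes_\kappa (B^+)^\flat$. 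I also recall the two standard facts about $A_{inf}$ of the integral perfectoid ring $B^+$: the kernel $\ker(\theta)=(\xi)$ is generated by a non-zero-divisor $\xi$, and $(p,\xi)$ is a regular sequence, so that the reduction $(B^+)^\flat = A_{inf}(B^+)/p\ra B^+/p$ is the quotient by the non-zero-divisor $\bar\xi$, and in particular $B^+/p \cong (B^+)^\flat/(\bar\xi)$.

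\emph{Proof of~(1) and the first case of~(3).} The map $\kappa=\cO_k/p\ra (B^+)^\flat$ is a homomorphism of perfect $\mathbb{F}_p$-algebras, hence relatively perfect. So the first two parts of~\Cref{dR when relatively perfect} give $\LL^\an_{A_{inf}(B^+)/\cO_k}=0$ and a filtered isomorphism $\dR^\an_{B^+/\cO_k}\cong\dR^\an_{B^+/A_{inf}(B^+)}$. Since $A_{inf}(B^+)\to B^+$ is surjective with kernel $(\xi)$ and the reduction $(B^+)^\flat\to B^+/p$ is the quotient by the non-zero-divisor $\bar\xi$, it is a local complete intersection; thus~\Cref{dR when relatively perfect}(4) identifies $\dR^\an_{B^+/A_{inf}(B^+)}$, together with its Hodge filtration, with the $p$-completed divided power envelope of $A_{inf}(B^+)$ along $\ker(\theta)$ together with its $p$-completed PD filtration. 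By definition this is $A_\cry(B^+)$, which proves~(1) and the $\cO_k$-case of~(3).

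\emph{Proof of~(2) and the second case of~(3).} The map $A_0/p\ra A_0/p\otimes_\kappa (B^+)^\flat$ is the base change of the relatively perfect map $\kappa\ra (B^+)^\flat$ along $\kappa\ra A_0/p$, hence is again relatively perfect; so the first two parts of~\Cref{dR when relatively perfect} give $\LL^\an_{(A_0\wh\otimes_{\cO_k}A_{inf}(B^+))/A_0}=0$ and a filtered isomorphism $\dR^\an_{B^+/A_0}\cong\dR^\an_{B^+/A_0\wh\otimes_{\cO_k}A_{inf}(B^+)}$. What remains, and what I expect to be the main obstacle, is to check that the surjection $A_0/p\otimes_\kappa (B^+)^\flat\to B^+/p$ is a local complete intersection. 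For this I would compute its cotangent complex: from the transitivity triangle for $\kappa\ra (B^+)^\flat\ra B^+/p$, using $\LL_{(B^+)^\flat/\kappa}=0$ (both rings being perfect) and the identification $B^+/p\cong (B^+)^\flat/(\bar\xi)$ with $\bar\xi$ a non-zero-divisor, one gets $\LL_{(B^+/p)/\kappa}\cong (B^+/p)[1]$; on the other hand $\LL_{(A_0/p\otimes_\kappa (B^+)^\flat)/\kappa}$ is a finite projective module placed in degree $0$, by Künneth and smoothness of $A_0/p$ over $\kappa$. The transitivity triangle for $\kappa\ra A_0/p\otimes_\kappa (B^+)^\flat\ra B^+/p$ then forces $\LL_{(B^+/p)/(A_0/p\otimes_\kappa (B^+)^\flat)}$ to be a finite projective $B^+/p$-module placed in degree $1$, so the surjection is cut out by a Koszul-regular sequence, hence is a local complete intersection. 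Now~\Cref{dR when relatively perfect}(4) identifies $\dR^\an_{B^+/A_0\wh\otimes_{\cO_k}A_{inf}(B^+)}$, with its Hodge filtration, as the $p$-completed divided power envelope of $A_0\wh\otimes_{\cO_k}A_{inf}(B^+)$ along $\ker(w^\sharp\otimes\theta)$, with its $p$-completed PD filtration; combining with the filtered isomorphism above proves~(2), and~(3) in both cases.

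\emph{Proof of~(4).} Given two admissible choices $A_0$ and $A_0'$, the fact that $\scrX$ is separated makes $\Spf(A_0)\cap\Spf(A_0')$ an affine open $\Spf(A_0'')$, whose generic fiber (the intersection of the generic fibers of $\Spf(A_0)$ and $\Spf(A_0')$) still contains the image of $U$. The morphisms $A_0\ra A_0''$ and $A_0'\ra A_0''$ come from open immersions of formal schemes, so $A_0/p\ra A_0''/p$ and $A_0'/p\ra A_0''/p$ are \'etale, in particular relatively perfect; hence by~\Cref{dR when relatively perfect}(2), together with the vanishing of the relevant analytic cotangent complexes (which, via the formula for graded pieces, upgrades the comparison to a filtered isomorphism), the natural maps $\dR^\an_{B^+/A_0}\ra\dR^\an_{B^+/A_0''}$ and $\dR^\an_{B^+/A_0'}\ra\dR^\an_{B^+/A_0''}$ are filtered isomorphisms. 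This gives the asserted independence, and licenses the notation $\dR^\an_{B^+/\scrX}$.

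The only genuinely non-formal step above is the cotangent-complex computation establishing the local complete intersection property of $A_0/p\otimes_\kappa (B^+)^\flat\to B^+/p$ in part~(2); the rest is bookkeeping with flatness and torsion-freeness plus invocations of~\Cref{dR when relatively perfect}.
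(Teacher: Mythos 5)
Your strategy is the paper's own: parts (1)--(3) are obtained by feeding the two triangles $\cO_k \to A_{inf}(B^+) \to B^+$ and $A_0 \to A_0 \hat{\otimes}_{\cO_k} A_{inf}(B^+) \to B^+$ into~\Cref{dR when relatively perfect}(4), with relative perfectness supplied by the perfectness of $\kappa = \cO_k/p$ (the paper's footnote about unramifiedness), and part (4) is reduced via separatedness to an \'{e}tale comparison; your use of~\Cref{dR when relatively perfect}(2) together with the vanishing of $\LL^\an_{A_0''/A_0}$ on Hodge-graded pieces is interchangeable with the paper's appeal to~\Cref{smooth formal Poincare} in the \'{e}tale case, and your flatness/torsion-freeness bookkeeping is exactly what is needed (and left implicit) in the paper.

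The one step that does not hold up as written is your verification of the local complete intersection hypothesis in (2). From the transitivity triangles you correctly get that $\LL_{(B^+/p)/(A_0/p\otimes_\kappa (B^+)^\flat)}$ is a finite projective module placed in degree $1$, but the inference ``hence the surjection is cut out by a Koszul-regular sequence'' is the converse direction of the lci criterion, and for these non-Noetherian rings it is not formal: the cotangent complex gives you that $I/I^2$ is finite projective and that there is no higher homotopy, but without Noetherian or adic hypotheses you cannot lift generators of $I/I^2$ to generators of $I$ (no Nakayama available), so Koszul-regular generation of the kernel does not follow from this computation alone; the Avramov-type converse invoked elsewhere in the paper is a Noetherian statement. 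The fix is direct and is what the paper implicitly relies on: factor the surjection as the quotient by $\bar\xi$ (still a nonzerodivisor in $A_0/p\otimes_\kappa(B^+)^\flat$ since $\kappa$ is a field) followed by $A_0/p\otimes_\kappa (B^+/p) \to B^+/p$; Zariski-locally on $\Spf(A_0)$ choose a framing, i.e.\ an \'{e}tale map $\kappa[T_1^{\pm1},\ldots,T_d^{\pm1}] \to A_0/p$, so that $A_0/p\otimes_\kappa B^+/p$ is \'{e}tale, in particular flat, over $(B^+/p)[T_1^{\pm1},\ldots,T_d^{\pm1}]$, where the kernel is generated by the regular sequence $(T_i - t_i)_{1\le i\le d}$ ($t_i$ the image of $T_i$ in $B^+/p$); regular sequences persist under flat base change, so the total kernel is locally generated by the regular sequence $(\bar\xi, T_1\otimes 1 - 1\otimes t_1, \ldots, T_d\otimes 1 - 1\otimes t_d)$, exactly as in~\Cref{cotangent complex of perfectoid circle} and~\Cref{dR of perfectoid circle}. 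With this replacement the rest of your argument is correct and matches the paper's proof.
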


\begin{remark}
\label{integral well-defined remark}
In particular, (1) and (2) tells us that these derived de Rham complexes are actually
quasi-isomorphic to an honest algebra viewed as a complex supported on cohomological degree $0$;
(4) tells us that sending $U=\Spa(B,B^+) \in X_{\pe/\scrX}^\omega$ to 
$\dR_{B^+/\scrX}^\an$ gives a well-defined presheaf on $X_{\pe/\scrX}^\omega$.
\end{remark}

\begin{proof}[Proof of~\Cref{integral local calculation}]
Applying~\Cref{dR when relatively perfect}.(4) to the triangles 
$$\cO_k \to A_{inf}(B^+) \to B^+
\text{ and } A_0 \to A_0 \hat{\otimes}_{\cO_k} A_{inf}(B^+) \to B^+$$ 
proves (1) and (2) 
respectively and (3)\footnote{Here we use the unramifiedness of $\cO_k$ to verify the relatively
perfectness assumption in~\Cref{dR when relatively perfect}.}.
As for (4), using separatedness of $\scrX$, we reduce to the situation where
image of $U$ in $X$ is in a smaller open $w^{-1}(\Spf(A_1)) \subset w^{-1}(\Spf(A_0))$.
It suffices to show the natural map $\dR_{B^+/A_0}^\an \to \dR_{B^+/A_1}^\an$ is a
filtered isomorphism, which follows from~\Cref{smooth formal Poincare}
as $A_0/p \to A_1/p$ is \'{e}tale.
\end{proof}

Recall that the subcategory $X_{\pe/\scrX}^\omega \subset X_\pe$ gives a basis
for the topology on $X_\pe$.
Hence any presheaf on $X_{\pe/\scrX}^\omega$ can be sheafified to a sheaf on $X_\pe$.

We define the analytic de Rham sheaf for $\wh\cO_X^+$ over 
$\cO_k$ and $w^{-1}\cO_\scrX$ as follows:
\begin{construction}[$\dR_{\wh\cO_X^+/\cO_k}^\an$ and $\dR_{\wh\cO_X^+/\cO_\scrX}^\an$]
The \emph{analytic de Rham sheaf of} $\wh\cO_X^+/\cO_k$, denoted as $\dR_{\wh\cO_X^+/\cO_k}^\an$,
is the $p$-adic completion of the unfolding
of the presheaf on $X_{\pe/\scrX}^\omega$ which assigns each $U=\Spa(B,B^+)$
the algebra
\(
\dR_{B^+/\cO_k}^\an
\).
We equip it with the decreasing Hodge filtration $Fil^r_\mathrm{H}$ given by 
the image of $p$-completion of the unfolding of the presheaf 
assigning each $U=\Spa(B,B^+)$ the $r$-th Hodge filtration in $\dR_{B^+/\cO_k}^\an$.

The \emph{analytic de Rham sheaf of} $\wh\cO_X^+/\cO_\scrX$, denoted as
$\dR_{\wh\cO_X^+/\cO_\scrX}^\an$, is the $p$-adic completion of the unfolding
of the presheaf on $X_{\pe/\scrX}^\omega$ which assigns each $U=\Spa(B,B^+)$
the filtered algebra
\(
\dR_{B^+/\scrX}^\an
\).
Similarly we equip it with the decreasing Hodge filtration
$Fil^r_H$ given by the image of $p$-completion of the unfolding of the presheaf
whose value on each $U=\Spa(B,B^+)$ is the $r$-th Hodge filtration in 
$\dR_{B^+/\scrX}^\an$.
\end{construction}

The fact that these definitions/constructions make sense follows 
from~\Cref{integral local calculation} and~\Cref{integral well-defined remark}.

One may also define the corresponding mod $p^n$ version of these sheaves.
Since sheafifying commutes with arbitrary colimit,
the $p$-adic completion of the sheafification of a presheaf $F$
is the same as the inverse limit over $n$ of the sheafification of 
presheaves $F/p^n$.
Therefore we have $\dR_{\wh\cO_X^+/\cO_k}/p^n$ is the same as the sheafification
of the presheaf $\dR_{B^+/\cO_k}/p^n$.
Its $r$-th Hodge filtration agrees with the sheafification of the presheaf 
$Fil^r_H(\dR_{B^+/\cO_k}/p^n)$, as sheafifying is an exact functor.
Similar statements can be made for the mod $p^n$ version of
$\dR_{\wh\cO_X^+/\cO_\scrX}^\an$ and its Hodge filtrations.

Now the strict exact Katz--Oda filtration obtained in the~\Cref{smooth formal Poincare}
gives us the following:
\begin{corollary}[Crystalline Poincar\'{e} lemma]
\label{Crystalline Poincare sequence}
There is a functorial $\dR_{\wh\cO_X^+/\cO_k}^\an$-linear 
strict exact sequence of filtered sheaves on $X_\pe$:
\[
0 \to \dR_{\wh\cO_X^+/\cO_k}^\an \to \dR_{\wh\cO_X^+/\cO_\scrX}^\an
\xrightarrow{\nabla} \dR_{\wh\cO_X^+/\cO_\scrX}^\an \otimes_{w^{-1}\cO_\scrX }
\st_1(w^{-1}\Omega^{1,\an}_\scrX) \xrightarrow{\nabla} \cdots
\]
\[
\cdots \xrightarrow{\nabla} 
\dR_{\wh\cO_X^+/\cO_\scrX}^\an \otimes_{w^{-1}\cO_\scrX }
\st_d(w^{-1}\Omega^{d,\an}_\scrX) \to 0,
\]
where $d$ is the relative dimension of $\scrX/\cO_k$.
\end{corollary}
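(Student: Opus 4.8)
The plan is to obtain the asserted sequence by unfolding (equivalently, in our situation, sheafifying) the local strict exact Katz--Oda filtration produced in~\Cref{smooth formal Poincare}, and then $p$-completing.

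First I would work on the basis $X_{\pe/\scrX}^\omega$. Let $U = \Spa(B,B^+) \in X_{\pe/\scrX}^\omega$ with image in $X$ contained in $w^{-1}(\Spf(A_0))$ for an affine open $\Spf(A_0) \subset \scrX$, and apply~\Cref{smooth formal Poincare} to the triangle $\cO_k \to A_0 \to B^+$ of $p$-complete flat $\mathbb{Z}_p$-algebras. Since $\scrX$ is formally smooth over $\cO_k$ of relative dimension $d$, the reduction $A_0/p$ is smooth over $\cO_k/p = \kappa$ of relative equidimension $d$, so we obtain a $p$-adic Katz--Oda filtration on $\dR_{B^+/\cO_k}^\an$ that is strict exact and witnesses
\[
0 \to \dR_{B^+/\cO_k}^\an \to \dR_{B^+/A_0}^\an \xrightarrow{\nabla} \dR_{B^+/A_0}^\an \otimes_{A_0} \st_1(\Omega^{1,\an}_{A_0/\cO_k}) \xrightarrow{\nabla} \cdots \xrightarrow{\nabla} \dR_{B^+/A_0}^\an \otimes_{A_0} \st_d(\Omega^{d,\an}_{A_0/\cO_k}) \to 0.
\]
By~\Cref{integral local calculation}(4) the term $\dR_{B^+/A_0}^\an$ equals $\dR_{B^+/\scrX}^\an$, independent of $A_0$; and for a smaller open $\Spf(A_1) \subset \Spf(A_0)$ with $A_0/p \to A_1/p$ \'etale one has $\Omega^{i,\an}_{A_1/\cO_k} \cong \Omega^{i,\an}_{A_0/\cO_k} \otimes_{A_0} A_1$, so the twisted terms also agree, $\dR_{B^+/A_0}^\an \otimes_{A_0} \Omega^{i,\an}_{A_0/\cO_k} \cong \dR_{B^+/A_1}^\an \otimes_{A_1} \Omega^{i,\an}_{A_1/\cO_k}$. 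Together with the functoriality of the Katz--Oda filtration in the triple (\Cref{Katz--Oda filtration}), this shows the local data assemble into a well-defined sequence of presheaves on $X_{\pe/\scrX}^\omega$, and the connecting maps are $\dR_{B^+/\cO_k}^\an$-linear and satisfy Newton--Leibniz by~\Cref{properties of KO filtration}(2).

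Second, I would unfold this sequence of presheaves to $X_\pe$ and $p$-complete. By~\Cref{integral local calculation} and~\Cref{integral well-defined remark} every presheaf involved is valued in honest modules concentrated in cohomological degree $0$, so unfolding coincides with ordinary sheafification; in particular $\dR_{\wh\cO_X^+/\cO_k}^\an$, $\dR_{\wh\cO_X^+/\cO_\scrX}^\an$, and the twists $\dR_{\wh\cO_X^+/\cO_\scrX}^\an \otimes_{w^{-1}\cO_\scrX} \st_i(w^{-1}\Omega^{i,\an}_\scrX)$ are precisely the sheafifications of the corresponding local assignments, using that sheafification commutes with the colimits defining the mixed filtration and tensor product of~\Cref{mixed filtration} and with formation of graded pieces. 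Since sheafification is exact the sequence stays exact, and the identification $\Gr^i_{\mathrm{KO}} \cong \dR_{\wh\cO_X^+/\cO_\scrX}^\an \otimes_{w^{-1}\cO_\scrX} \st_i(w^{-1}\Omega^{i,\an}_\scrX)$ follows from~\Cref{properties of KO filtration}(1). For strict exactness in the sense of~\Cref{Filtration convention}, I use that $\cO_k \to A_0$ is smooth of equidimension $d$, so $\Fil^i_{\mathrm{KO}}\Fil^j_{\mathrm{H}} = 0$ for $i > d$ by~\Cref{properties of KO filtration}(4); hence the Katz--Oda filtration is finite in each Hodge degree $j$, and this completeness is preserved both by sheafification and by derived $p$-completion, since all terms are already $p$-complete ($\Omega^{i,\an}_{A_0/\cO_k}$ is finite flat over $A_0$ and $\dR_{B^+/A_0}^\an$ is $p$-complete) and the sequence has bounded length.

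I expect the main obstacle to be purely organizational bookkeeping: verifying that unfolding genuinely commutes with the bifiltered structure — that the Day-convolution/Bar-type colimits defining $\Fil^i_{\mathrm{KO}}$ and its internal Hodge filtration in~\Cref{mixed filtration} are correctly computed on the basis and then unfolded — and that derived $p$-completion does not disturb strictness. Each point reduces to exactness of sheafification, its commutation with colimits, $p$-completeness of all terms, and finiteness of the filtrations, but assembling these into a clean proof of strict exactness of filtered \emph{sheaves} (rather than just on a basis) is where the care is needed.
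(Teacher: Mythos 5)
Your proposal is correct and follows essentially the same route as the paper: reduce to the basis $X_{\pe/\scrX}^\omega$, where by \Cref{integral local calculation} everything is an honest $p$-complete algebra concentrated in degree $0$ with filtrations given by actual submodules, and then invoke the strict exact Katz--Oda filtration of \Cref{smooth formal Poincare}, with exactness of sheafification handling the passage to sheaves. The only difference is organizational: the paper packages your final bookkeeping about $p$-completion via the observation made just before the corollary that the $p$-completed unfolded sheaves agree mod $p^n$ with the sheafifications of the mod $p^n$ presheaves, so strict exactness is checked mod $p^n$ and one then takes inverse limits.
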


\begin{proof}
Using the discussion before this Corollary, we reduce to checking
this at the level of presheaves on $X_{\pe/\scrX}^\omega$.
Since now everything in sight are supported cohomologically in degree $0$
with filtrations given by submodules because 
of~\Cref{integral local calculation},
the strict exact Katz--Oda filtration in~\Cref{smooth formal Poincare} implies what we want.
\end{proof}

\begin{remark}
\label{integral separated remark}
We can drop the separatedness assumption on $\scrX$ as follows.
Since any formal scheme is covered by affine ones, and affine formal schemes are
automatically separated,
we may define all these de Rham sheaves on each slice subcategory
of the pro-\'{e}tale site of the rigid generic fiber of affine opens of $\scrX$.
Similar to the proof of~\Cref{integral local calculation}.(4),
we can show these de Rham sheaves satisfy the base change formula with respect
to maps of affine opens of $\scrX$
(by appealing to~\Cref{smooth formal Poincare} again),
hence these sheaves on the slice subcategories glue to a global one.
The Crystalline Poincar\'{e} lemma obtained above holds verbatim
as exactness of a sequence of sheaves may be checked locally.
\end{remark}

\subsection{Comparing with Tan--Tong's crystalline period sheaves}
Lastly we shall identify the two de Rham sheaves defined 
above with two period sheaves
that show up in the work of Tan--Tong~\cite{TT19}.
We refer readers to Definitions 2.1.~and 2.9.~of loc.~cit.~for the
meaning of period sheaves $\mathbb{A}_{\cry}$ and $\cO\mathbb{A}_{\cry}$
and their PD filtrations.


We look at the triangle of sheaves of rings:
\[
\cO_k \to w^{-1}(\cO_\scrX) \hat{\otimes}_{\cO_k} \mathbb{A}_{inf} 
\xrightarrow{w^\sharp \hat{\otimes} \theta} \wh\cO_X^+.
\]

\begin{theorem}
\label{crystalline period comparison}
The triangle above induces a filtered isomorphism of sheaves:
$\dR_{\wh\cO_X^+/\cO_k} \cong \mathbb{A}_{\cry}$
and
$\dR_{\wh\cO_X^+/\cO_\scrX} \cong \cO\mathbb{A}_{\cry}$.

Moreover, under this identification, 
the Crystalline Poincar\'{e} sequence in~\Cref{Crystalline Poincare sequence}
agrees with the one obtained in~\cite[Corollary 2.17]{TT19}.
\end{theorem}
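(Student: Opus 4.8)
The plan is to deduce both statements from the local computations of \Cref{integral local calculation}, by matching them with Tan--Tong's description of $\mathbb{A}_{\cry}$ and $\cO\mathbb{A}_{\cry}$ on the affinoid perfectoid basis of $X_\pe$.

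First I would unwind \cite[Definitions 2.1 and 2.9]{TT19}: both $\mathbb{A}_{\cry}$ and $\cO\mathbb{A}_{\cry}$, together with their PD filtrations, are the $p$-adic completions of the sheafifications of presheaves on the affinoid perfectoid objects $U=\Spa(B,B^+)$ of $X_\pe$ whose values are, respectively, the $p$-completed divided power envelope of $\theta\colon A_{inf}(B^+)\twoheadrightarrow B^+$ and --- for $U$ lying over an affine open $\Spf(A_0)\subset\scrX$ --- of $w^\sharp\hat{\otimes}\theta\colon A_0\hat{\otimes}_{\cO_k}A_{inf}(B^+)\twoheadrightarrow B^+$, with the PD filtrations. (For $\cO\mathbb{A}_{\cry}$, whose definition in loc.\ cit.\ is phrased intrinsically via $\mathbb{A}_{\cry}$ and $\cO_\scrX$, this local model and the independence of the choice of $A_0$ are among the structural results of loc.\ cit., mirroring our \Cref{integral local calculation}.(4).) But \Cref{integral local calculation}.(1)--(3) says precisely that this is the presheaf on $X_{\pe/\scrX}^\omega$ whose $p$-completed unfolding defines $\dR_{\wh\cO_X^+/\cO_k}^\an$, resp.\ $\dR_{\wh\cO_X^+/\cO_\scrX}^\an$, together with its Hodge filtration; moreover the comparison maps are the $\mathscr{C}omp$-isomorphisms of \Cref{dR when relatively perfect}.(4) built locally in the proof of \Cref{integral local calculation}, induced by applying $\dR$-functoriality to the triangle of the statement together with its relative-perfectness identifications. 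Since $X_{\pe/\scrX}^\omega$ is a basis, a (hyper)sheaf is determined by its restriction to it (\Cref{unfolding}), and since all the local objects are $p$-torsion free --- so that derived and classical $p$-completion agree and the Hodge-filtered pieces remain honest submodules --- the unfolded sheaves coincide with Tan--Tong's. (Here one also uses that Tan--Tong's period sheaves are acyclic on affinoid perfectoids, so that our unfolding genuinely recovers their ordinary sheaves.) This gives the filtered isomorphisms $\dR_{\wh\cO_X^+/\cO_k}^\an\cong\mathbb{A}_{\cry}$ and $\dR_{\wh\cO_X^+/\cO_\scrX}^\an\cong\cO\mathbb{A}_{\cry}$.

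For the comparison of Poincar\'e sequences, both \Cref{Crystalline Poincare sequence} and \cite[Corollary 2.17]{TT19} are $\mathbb{A}_{\cry}$-linear resolutions of $\mathbb{A}_{\cry}$ by the complex $\cO\mathbb{A}_{\cry}\otimes_{w^{-1}\cO_\scrX}w^{-1}\Omega^{\bullet,\an}_\scrX$, whose terms match under the isomorphism just constructed; it remains to identify the differentials. Our $\nabla$ is, by \Cref{properties of KO filtration}.(2) applied to $\cO_k\to w^{-1}\cO_\scrX\to\wh\cO_X^+$, an $\mathbb{A}_{\cry}$-linear connection satisfying the Leibniz rule; as the period rings in sight are $p$-torsion free (indeed $\mathbb{Z}$-torsion free), such a connection automatically commutes with divided powers and is therefore determined by its restriction to $w^{-1}\cO_\scrX\subset\cO\mathbb{A}_{\cry}$, which by the explicit local model of \Cref{cotangent complex of perfectoid circle} and \Cref{dR of perfectoid circle} is the de Rham differential $f\mapsto 1\otimes df$ of $\scrX/\cO_k$. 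Tan--Tong's $\nabla$ is constructed to satisfy the same three properties, so the two connections --- hence the two sequences --- agree.

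The step I expect to be the main obstacle is this last identification of connections: verifying that the abstractly defined Katz--Oda connection restricts \emph{exactly} (with the correct sign) to $d\colon w^{-1}\cO_\scrX\to w^{-1}\Omega^{1,\an}_\scrX$, and that the divided-power uniqueness argument is legitimate in the derived $p$-complete setting. This is where \Cref{cotangent complex of perfectoid circle} and \Cref{dR of perfectoid circle} earn their keep: on the perfectoid torus $R=\mathbb{Z}_p\langle T_1^{\pm1},\ldots,T_n^{\pm1}\rangle\to R_\infty$ they supply an isomorphism under which the PD generator $T_i-S_i\in\ker\theta$ corresponds to $1\otimes dT_i$ in $\Gr^1$, so that the Katz--Oda connection becomes visibly $\sum_i (\partial/\partial T_i)\otimes dT_i$ on the relevant PD polynomial algebra, matching Tan--Tong's; \'etale base change (as in the proof of \Cref{integral local calculation}.(4)) transports this to a general $\scrX$, and the unfolding/sheafification of \Cref{Integral de Rham sheaves} concludes.
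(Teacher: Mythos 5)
Your proposal matches the paper's proof in substance: both arguments identify $\dR_{\wh\cO_X^+/\cO_k}^\an$ and $\dR_{\wh\cO_X^+/\cO_\scrX}^\an$ with Tan--Tong's $\mathbb{A}_{\cry}$ and $\cO\mathbb{A}_{\cry}$ by recognizing the two sides as (completions of) the sheafification of the very same PD-envelope presheaf on the basis $X_{\pe/\scrX}^\omega$, using \Cref{integral local calculation}, and both match the Poincar\'{e} sequences by $\mathbb{A}_{\cry}$-linearity plus the Leibniz rule, reducing to the explicit check that the image of $T_i-S_i$ is sent to $1\otimes dT_i$ on either side (via \Cref{cotangent complex of perfectoid circle} and \Cref{dR of perfectoid circle}). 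The only difference is technical bookkeeping: the paper runs the comparison modulo $p^n$ and passes to the inverse limit, which is how it handles the base change of PD envelopes and the uncompleted-versus-completed tensor in Tan--Tong's $\cO\mathbb{A}_{inf}$, whereas you invoke $p$-torsion-freeness and Tan--Tong's acyclicity/structure results to compare the $p$-complete objects directly.
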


\begin{proof}
We check these isomorphisms modulo $p^n$ for any $n$.
For both cases, the de Rham sheaf and
the crystalline period sheaf are both
unfoldings of the same PD envelope presheaf (with its PD filtrations)
on $X_{\pe/\scrX}^\omega$:
for the de Rham sheaves this statement follows 
from~\Cref{integral local calculation} and base change formula of PD envelope
(note that taking PD envelope is a left adjoint functor, hence commutes with
colimit, in particular, it commutes with modulo $p^n$ for any $n$),
for the crystalline period sheaf this follows from the definition
(note that although the $\cO\mathbb{A}_{inf}$ defined in Tan--Tong's work
uses uncompleted tensor of $w^{-1}(\cO_\scrX)$ and $\mathbb{A}_{inf}$
instead of the completed tensors we are using here,
the difference goes away when we modulo any power of $p$ and restricts
to the basis of affinoid perfectoid objects).

Therefore for both cases,
we have natural isomorphisms modulo $p^n$ for any $n$,
taking inverse limit gives the result we want
as all sheaves are $p$-adic completion of their modulo $p^n$ versions.

The claim about matching Poincar\'{e} sequences follows by unwinding definitions.
Indeed we need to check that $\nabla$ defined in these two sequences
agree, but since $\nabla$ is linear over 
$\dR_{\wh\cO_X^+/\cO_k} \cong \mathbb{A}_{\cry}$,
it suffices to check that $\nabla$ agrees on $u_i$
which is the image of $T_i - S_i$ 
(notation from loc.~cot.~and~\Cref{dR of perfectoid circle} respectively)
by functoriality of the Poincar\'{e} sequence.
One checks that in both cases their image under $\nabla$ is $1 \otimes dT_i$.
\end{proof}

\section{Rational Theory}
\label{Rational Theory}

For the rest of this article, we shall study a rational version
of the previous derived de Rham complex.
Let us spell out the setup by recalling the following notation: $k$ is a $p$-adic field
with ring of integers denoted by $\cO_k$ and $X$ is a
separated\footnote{Just like~\Cref{integral separated remark} suggests, 
we can remove the separatedness assumption in the end.} rigid space
over $k$ which we view as an adic space over $\Spa(k,\cO_k)$.

\subsection{Affinoid construction}
In this subsection, we recall the construction of the analytic cotangent complex and
give the construction of the analytic derived de Rham complex, 
for a map of Huber rings over a $k$.
For a detailed discussion of the analytic cotangent complex 
(for topological finite type algebras), we refer readers to \cite[Section 7.1-7.3]{GR03}.

Let $f \colon (A,A^+)\ra (B,B^+)$ be a map of complete Huber rings over $k$. 
Denote by $\cC_{B/A}$ the filtered category of pairs $(A_0,B_0)$, 
where $A_0$ and $B_0$ are rings of definition of $(A,A^+)$ and $(B,B^+)$ separately, 
such that $f(A_0)\subset B_0$.

\begin{construction}[Analytic cotangent complex, affinoid]
\label{cot construction aff}
For each $(A_0,B_0) \in \cC_{B/A}$, denote by $\LL_{B_0/A_0}^\an$ 
the integral analytic cotangent complex of $A_0 \ra B_0$ as in the~\Cref{int construction}.
The \emph{analytic cotangent complex of $f \colon (A,A^+)\ra (B,B^+)$}, denoted by $\LL_{B/A}^\an$,
is defined as the filtered colimit
\[
\LL_{B/A}^\an \coloneqq \underset{(A_0,B_0)\in \cC_{B/A}}{\colim} 
\LL_{B_0/A_0}^\an[\frac{1}{p}].
\]
\end{construction}

For the convenience of readers, let us list a few properties of analytic cotangent complex
for a morphism of rigid affinoid algebras obtained by Gabber--Romero.

\begin{theorem}
\label{GR statements}
Let $A \to B$ be a morphism of $k$-affinoid algebras, then we have:
\begin{enumerate}
\item~\cite[Theorem 7.1.33.(i)]{GR03} 
$\LL_{B/A}^\an$ is in $\scrD^{\leq 0}(B)$ and is pseudo-coherent over $B$;
\item~\cite[Lemma 7.1.27.(iii) and Equation 7.2.36]{GR03} 
the $0$-th cohomology of the analytic cotangent complex is given by
the analytic relative differential:
$\rmH_0(\LL_{B/A}^\an) \simeq \Omega^\an_{B/A}$;
\item~\cite[Theorem 7.2.42.(ii)]{GR03} if $A \to B$ is smooth, then 
$\LL^\an_{B/A} \simeq \Omega^\an_{B/A}[0]$;
\item~\cite[Lemma 7.2.46.(ii)]{GR03} 
if $A \to B$ is surjective, then the analytic cotangent complex agrees with the classical cotangent complex:
$\LL_{B/A} \simeq \LL^\an_{B/A}$.
\end{enumerate}
\end{theorem}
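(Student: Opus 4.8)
The statement is essentially a dictionary between our \Cref{cot construction aff} and the results of Gabber--Romero, so the plan is first to reconcile the two definitions of the analytic cotangent complex and then to quote the four cited results. The only genuine content lies in the reconciliation; everything else is a citation.

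\textbf{Step 1: matching the definitions.} First I would check that $\LL_{B/A}^\an$ as defined in \Cref{cot construction aff} agrees with the topological (``analytic'') cotangent complex of \cite[Section 7]{GR03}. The key point is that for an affinoid $k$-algebra $A$, a choice of surjection $k\langle T_1,\dots,T_n\rangle \twoheadrightarrow A$ produces a ring of definition given by the image of $\cO_k\langle T_1,\dots,T_n\rangle$, and pairs of this ``standard'' shape are cofinal in $\cC_{B/A}$; hence the filtered colimit in \Cref{cot construction aff} may be computed along such a cofinal system. For the transition maps in this system, the base-change triangle of cotangent complexes together with the fact that derived $p$-completion commutes with the relevant filtered colimits shows that the derived $p$-completed algebraic cotangent complexes $\LL_{B_0/A_0}^\an$ (after inverting $p$) assemble into the continuous cotangent complex used by Gabber--Romero. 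I would also record here that, since $B$ is (eventually) pseudo-coherent and everything is concentrated in non-positive degrees, inverting $p$ commutes with taking cohomology, so assertions about $\rmH_0$ and about coherence transfer between the two models without loss.

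\textbf{Step 2: the four assertions.} With Step 1 in hand, (1) is exactly \cite[Theorem 7.1.33.(i)]{GR03}; (2) combines \cite[Lemma 7.1.27.(iii)]{GR03}, which computes $\rmH_0$ of the cotangent complex, with the identification of the continuous module of K\"ahler differentials \cite[Equation 7.2.36]{GR03}; (3) is \cite[Theorem 7.2.42.(ii)]{GR03}; and (4) is \cite[Lemma 7.2.46.(ii)]{GR03}. For (4) I would additionally remark that when $A \to B$ is surjective the analytic cotangent complex is concentrated in degrees $\le 0$ with each cohomology module finitely generated over $B$, so the derived $p$-completion implicit in the analytic formalism is already a no-op on it --- which is what makes the comparison $\LL_{B/A}\simeq \LL_{B/A}^\an$ of \cite[Lemma 7.2.46.(ii)]{GR03} possible in the first place.

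\textbf{Expected main obstacle.} The hard part is Step 1: one must be careful that the naive recipe ``take the filtered colimit of the derived $p$-completed algebraic cotangent complexes and then invert $p$'' genuinely reproduces Gabber--Romero's object, which is constructed through a different formalism (simplicial deformations computing the continuous cotangent complex). The verification is routine once one restricts to a cofinal system of standard rings of definition and uses exactness of derived $p$-completion on the pseudo-coherent complexes in play, but it is the step where the two sets of conventions have to be pinned down precisely, and it is the only place where anything beyond bookkeeping happens.
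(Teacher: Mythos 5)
Your proposal is correct and follows essentially the same route as the paper: the paper gives no argument for this statement beyond the four citations to Gabber--Romero, exactly as in your Step 2. Your Step 1, reconciling the colimit-over-rings-of-definition construction of \Cref{cot construction aff} with Gabber--Romero's analytic cotangent complex, is left implicit in the paper (which simply refers readers to \cite[Sections 7.1--7.3]{GR03}), so spelling it out is reasonable extra diligence rather than a genuinely different approach.
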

	
\begin{construction}[Analytic derived de Rham complex, affinoid]
\label{ddR construction aff}
Let $f \colon (A,A^+) \ra (B,B+)$ be a map of complete Huber rings over $k$.
For each $(A_0,B_0)\in \cC_{B/A}$, 
by the \Cref{int construction} we could define the integral analytic derived de Rham complex $\dR_{B_0/A_0}^\an$, 
as an object in $\CAlg(\DF(A_0))$.
Then the \emph{analytic derived de Rham complex $\dR_{B/A}^\an$} of 
$(B,B^+)$ over $(A,A^+)$, as an object in $\CAlg(\DF(A))$, 
is defined to be the filtered colimit
\[
\dR_{B/A}^\an \coloneqq \underset{(A_0,B_0)\in \cC_{B/A}}{\colim}\dR_{B_0/A_0}^\an[\frac{1}{p}].
\]

Moreover, the \emph{(Hodge) completed analytic derived de Rham complex $\wh\dR_{B/A}^\an$}
of $(B,B^+)$ over $(A,A^+)$, as an object in $\CAlg(\wh\DF(A))$, 
is defined as the derived filtered completion of $\dR_{B/A}^\an$.
\end{construction}

By the construction, the graded pieces of the filtered complete $A$-complex $\wh\dR_{B/A}^\an$ is given by
\begin{align*}
\tag{\epsdice{2}}
\label{graded pieces of ddR}
\Gr^i(\wh\dR_{B/A}^\an)&\cong \underset{(A_0,B_0)\in \cC_{B/A}}{\colim}\Gr^i(G(A_0,B_0))\\
&\cong \underset{(A_0,B_0)\in \cC_{B/A}}{\colim}(\rmL\wedge^i\LL_{B_0/A_0}^\an[\frac{1}{p}])[-i]\\
&\cong (\rmL\wedge^i\LL_{B/A}^\an)[-i],
\end{align*}
due to the fact that the functor $\Gr^i$ preserves filtered colimits.

\begin{remark}[Complexity of the construction]
The two rational constructions above involve colimits among all rings of definitions and seem to be very complicated.
A naive attempt would be taking the usual cotangent/derived de Rham complex of $A^+\ra B^+$, 
apply the derived $p$-adic completion and invert $p$ 
(and do the filtered completion, for the derived de Rham complex case) directly.
This would \emph{not} give us the expected answer in general, 
which is essentially due to the possible existence of nilpotent elements in $(A,A^+)$ and $(B,B^+)$.

Take the map $(k,\cO_k)\ra (B,B^+)$ for $B=k\langle \epsilon\rangle/(\epsilon^2)$ as an example.
Then a ring of definition $B_0$ of $B$ could be $\cO_k\langle \epsilon\rangle/(\epsilon^2)$, 
while there is only one open integral subring of $B$ that contains $\cO_k$, 
namely $\cO_k \oplus k\cdot \epsilon$.
In this case, it is easy to see that the derived $p$-completion of cotangent complexes 
$\LL_{B^+/\cO_k}$ and $\LL_{B_0/\cO_k}$ are different,
and remain so after inverting $p$.
\end{remark}

\begin{remark}[Simplified construction for uniform Huber pairs]
\label{sim construction}
Assume both of the Huber pairs $(A,A^+)\ra (B,B^+)$ are \emph{uniform}; 
namely the subrings of power bounded elements $A^\circ$ and $B^\circ$ are bounded in $A$ and $B$ separately.
Then both $A^+$ and $B^+$ are rings of definition of $A$ and $B$ separately.
In particular, the~\Cref{cot construction aff} and the~\Cref{ddR construction aff} can be simplified as follows:
\begin{align*}
\LL_{B/A}^\an&=\LL_{B^+/A^+}^\an[\frac{1}{p}],\\
\wh\dR_{B/A}^\an&=filtered~completion~of~((derived~p-completion~of~\dR_{B^+/A^+})[\frac{1}{p}]),
\end{align*}
where we recall that $\LL_{B^+/A^+}^\an$ is the derived 
$p$-completion of the classical cotangent complex $\LL_{B^+/A^+}$, 
and $\dR_{B^+/A^+}$ is the classical derived de Rham complex of $B^+/A^+$, 
as in~\cite[Examples 5.11-5.12]{BMS2}.

Examples of uniform Huber pairs include
reduced affinoid algebras over discretely valued or algebraically closed non-Archimedean
fields~\cite[Theorem 3.5.6]{FvdP},
and perfectoid affinoid algebras~\cite[Theorem 6.3]{Sch12}.
\end{remark}

An arithmetic example of the Hodge-completed analytic derived de Rham complex
has been worked out by Beilinson.

\begin{example}[{\cite[Proposition 1.5]{Bei12}}]
\label{Beilinson Colmez example}
We have a filtered isomorphism:
\[
B_{\dR}^+ \cong \wh\dR^\an_{\overline{\mathbb{Q}_p}/\mathbb{Q}_p}.
\]
\end{example}

Next we work out a geometric example.
Let us compute
the Hodge-completed analytic derived de Rham complex of a
perfectoid torus over a rigid analytic torus.
Following the notation in~\Cref{cotangent complex of perfectoid circle},
let $R = \mathbb{Z}_p \langle T_1^{\pm 1}, \ldots, T_n^{\pm 1} \rangle$, and 
$R_{\infty} = \mathbb{Z}_p \langle T_1^{\pm 1/p^{\infty}}, 
\ldots, T_n^{\pm 1/p^{\infty}} \rangle = 
R \langle S_1^{1/p^{\infty}}, \ldots, S_n^{1/p^{\infty}} \rangle/(T_i - S_i; 
1 \leq i \leq n)$.
\begin{example}
\label{rational torus example}
Continue with~\Cref{dR of perfectoid circle}.
After inverting $p$ and completing along Hodge filtrations, we see that
$\wh\dR^\an_{R_{\infty}[1/p]/R[1/p]}$ is given by the completion of
$\mathbb{Q}_p \langle T_i^{\pm 1}, S_i^{1/p^\infty} \rangle$ along $\{T_i-S_i; 1 \leq i \leq n\}$.
Here we use~\Cref{sim construction} to relate
$\wh\dR^\an_{R_{\infty}/R}$ and $\wh\dR^\an_{R_{\infty}[1/p]/R[1/p]}$.
A more explicit presentation is
\[\wh\dR^\an_{R_{\infty}[1/p]/R[1/p]} = 
\mathbb{Q}_p \langle S_1^{\pm 1/p^\infty}, \ldots, S_n^{\pm 1/p^\infty} \rangle [\![X_1, \ldots, X_n]\!]
\]
via change of variable $T_i = X_i + S_i$ (hence $T_i^{-1} = S_i^{-1} \cdot (1 + S_i^{-1}X_i)^{-1}$),
c.f.~the notation before \cite[Proposition 6.10]{Sch13}.
\end{example}

We need to understand the output of these constructions for general perfectoid affinoid
algebras relative to affinoid algebras.
The following tells us that in this situation, the Hodge completed analytic derived de Rham complex
can be computed with any ring of definition inside the affinoid algebra.

\begin{lemma}
Let $(A,A^+)$ be a topologically finite type complete Tate ring over $(k,\cO_k)$,
with $A_0 \subset A^+$ being a ring of definition.
Let $(B,B^+)$ be a perfectoid algebra over $(A,A^+)$.
Then we have:
\begin{enumerate}
\item The analytic cotangent complex $\LL_{B/A}^\an \cong \LL_{B^+/A_0}^\an[1/p]$.
\item The Hodge completed analytic derived de Rham complex 
$\wh\dR_{B/A}^\an \cong \wh{\dR_{B^+/A_0}^\an[1/p]}$, 
where the latter is the Hodge completion of $\dR_{B^+/A_0}^\an[1/p]$.
\end{enumerate}
\end{lemma}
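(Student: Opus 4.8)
The key point is that the Huber pair $(B, B^+)$ being perfectoid forces $B^+$ to be (almost) bounded, so $B^+$ itself is essentially the unique ring of definition up to the kind of discrepancy that disappears after inverting $p$. More precisely, the strategy is to show that the index category $\cC_{B/A}$ appearing in~\Cref{cot construction aff} and~\Cref{ddR construction aff} is, for the purposes of the colimit, ``dominated'' by the single pair $(A_0, B^+)$: every pair $(A_0', B_0') \in \cC_{B/A}$ is contained in a pair where the second coordinate is commensurable with $B^+$, and passing through these commensurable rings of definition changes $\LL^\an$ and $\wh\dR^\an$ only by a $p$-power-torsion quantity which becomes trivial after $[\frac1p]$.

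First I would prove (1). By~\Cref{cot construction aff}, $\LL_{B/A}^\an = \colim_{(A_0',B_0') \in \cC_{B/A}} \LL_{B_0'/A_0'}^\an[\frac1p]$. Since $(B,B^+)$ is perfectoid, $B^+$ is bounded (it is open and integrally closed in the uniform ring $B$, and $B^\circ = B^+$ up to boundedness issues that are harmless here), hence $B^+$ is itself a ring of definition; moreover any ring of definition $B_0'$ of $B$ satisfies $p^N B^+ \subseteq B_0' \subseteq p^{-N} B^+$ for some $N$, i.e.\ $B_0'[\frac1p] = B^+[\frac1p] = B$ and $B_0'$ and $B^+$ are commensurable as $\cO_k$-lattices. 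The pair $(A_0, B^+)$ is cofinal enough: given any $(A_0', B_0')$, the pair $(A_0 \cdot A_0', B_0' \cdot B^+)$ (the subring generated by both) lies in $\cC_{B/A}$ and receives maps from both. Now one needs the transition maps in the colimit to be isomorphisms after inverting $p$. For a commensurable inclusion of rings of definition $B_0' \hookrightarrow B_0''$ over a fixed $A_0'$, the cofiber of $\LL^\an_{B_0'/A_0'} \to \LL^\an_{B_0''/A_0'}$ is controlled by $\LL^\an_{B_0''/B_0'}$, and since $B_0' \to B_0''$ becomes an isomorphism after inverting $p$ (both being lattices in $B$) and both are $p$-complete, this relative analytic cotangent complex is killed by a power of $p$ — here one invokes that the analytic cotangent complex of a map that is a $p$-adic-equivalence-after-$[\frac1p]$ between $p$-complete flat algebras vanishes rationally, which can be checked reducing mod $p$ or via the conjugate filtration as in the proof of~\Cref{dR when relatively perfect}. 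Similarly, changing $A_0'$ among rings of definition of $A$ inside $A^+$ is handled by the base-change triangle and the fact that, up to commensurability, all such $A_0'$ contain $A_0$. So the colimit collapses to $\LL^\an_{B^+/A_0}[\frac1p]$.

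For (2), the argument is formally the same, carried out at the level of the filtered $E_\infty$-algebra $\dR^\an_{B_0'/A_0'}$ rather than $\LL^\an$: by~\Cref{ddR construction aff}, $\dR_{B/A}^\an = \colim \dR_{B_0'/A_0'}^\an[\frac1p]$, and one checks the transition maps are filtered isomorphisms after $[\frac1p]$ by checking on graded pieces, where by the formula~\eqref{graded pieces of ddR} (or rather its integral analogue) we have $\Gr^i = (\rmL\wedge^i \LL^\an_{B_0'/A_0'})[-i]$, reducing us precisely to the statement proved in (1). Hence $\dR_{B/A}^\an \cong \dR^\an_{B^+/A_0}[\frac1p]$ as filtered algebras, and taking derived Hodge completion of both sides gives $\wh\dR_{B/A}^\an \cong \wh{\dR^\an_{B^+/A_0}[\frac1p]}$; completion commutes with the identification because it is a functorial operation on $\DF(A)$ and we already have a filtered isomorphism before completing.

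**Main obstacle.** The essential difficulty is the boundedness/commensurability input and the vanishing (after $[\frac1p]$) of $\LL^\an_{B_0''/B_0'}$ for commensurable rings of definition; this is where perfectoidness of $(B,B^+)$ is genuinely used (an arbitrary uniform pair would already be covered by~\Cref{sim construction}, but here $A$ is only topologically finite type, not uniform, so $A^+$ need not be bounded and one must work with an honest ring of definition $A_0$). One should be slightly careful that $B^+$ is a ring of definition — for a perfectoid Tate ring this follows since $B$ is uniform so $B^\circ$ is bounded and $B^+$ is commensurable with $B^\circ$ — and that the colimit over $\cC_{B/A}$ can be computed along the cofinal (up to commensurability) subsystem of pairs $(A_0', B_0')$ with $A_0' \supseteq A_0$ and $B_0' \supseteq B^+$. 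Everything else is bookkeeping with base-change triangles and the fact that inverting $p$ kills the bounded-$p$-torsion cofibers.
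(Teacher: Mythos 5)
Your overall strategy (reduce by cofinality to pairs of the form $(A_0',B^+)$, using that perfectoid implies uniform so that $B^+$ is itself a ring of definition, and then show the transition maps become isomorphisms after inverting $p$ via the transitivity triangle) is the same as the paper's. But the key step is justified incorrectly. The vanishing you actually need is $\LL^\an_{A_0'/A_0}[\frac{1}{p}]=0$ for two rings of definition $A_0\subset A_0'$ of $A$, and you propose to get it (and the analogous statement for commensurable rings of definition of $B$) from the claim that ``the analytic cotangent complex of a map that is an equivalence after inverting $p$ between $p$-complete flat algebras vanishes rationally, which can be checked reducing mod $p$ or via the conjugate filtration as in the proof of \Cref{dR when relatively perfect}.'' This argument does not work: modulo $p$ the map $A_0/p\to A_0'/p$ is neither an isomorphism nor relatively perfect (relative perfectness is exactly the hypothesis that makes the conjugate-filtration argument in \Cref{dR when relatively perfect} run), so reducing mod $p$ gives nothing. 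The real subtlety is that derived $p$-completion does not commute with inverting $p$: the uncompleted $\LL_{A_0'/A_0}$ is indeed rationally zero (localization), i.e.\ has $p$-power-torsion cohomology, but unless that torsion is bounded degreewise the completion $\LL^\an_{A_0'/A_0}$ can acquire nonzero rational cohomology. The boundedness is a finiteness statement, and this is precisely where the paper's proof uses Gabber--Romero: since $A_0,A_0'$ are topologically of finite type over $\cO_k$, $\LL_{A_0'/A_0}$ is pseudo-coherent (\cite[Theorem 7.1.33]{GR03}, also used to see that the transitivity triangle needs no extra completion of the tensor), and $\LL^\an_{A_0'/A_0}[\frac1p]\cong\Omega^{1,\an}_{A_0'[1/p]/A_0[1/p]}=\Omega^{1,\an}_{A/A}=0$ by \cite[Theorem 7.2.42]{GR03} (cf.\ \Cref{GR statements}). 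Your proposal never invokes any such finiteness input, so the central vanishing is unproved.

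Two further remarks. First, the portion of your argument devoted to transition maps in the $B$-direction (commensurable rings of definition $B_0'\subset B_0''$) is both unnecessary and the place where your blanket claim is least plausible: by the cofinality you yourself observe, every pair maps to one with second coordinate $B^+$, so only the $A$-direction transitions need to be isomorphisms; and for the huge (non-Noetherian, non-pseudo-coherent) rings on the perfectoid side there is no finiteness available to control the torsion, so the asserted rational vanishing of $\LL^\an_{B_0''/B_0'}$ would require a genuinely different argument than the one you sketch. Second, in part (2) you conclude an uncompleted filtered isomorphism $\dR^\an_{B/A}\cong\dR^\an_{B^+/A_0}[\frac1p]$ from an isomorphism on graded pieces; graded pieces only detect isomorphisms after (or for) complete filtered objects, so you should argue on the quotients $\dR^\an/\Fil^n$ or, as the paper does, phrase the reduction directly for the Hodge-completed objects, which is all that statement (2) requires.
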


In the proof below we will show a stronger statement: 
the transition morphisms of the colimit process computing left hand side
in~\Cref{cot construction aff} and~\Cref{ddR construction aff} are all isomorphisms.

\begin{proof}
Let $A_0' \subset A^+$ be another ring of definition containing $A_0$.
It suffices to show that $\LL_{B^+/A_0}^\an[1/p] \cong \LL_{B^+/A_0'}^\an[1/p]$
and similarly for their Hodge completed analytic derived de Rham complexes.
Since Hodge completed analytic derived de Rham complex of both sides are derived complete with respect to
the Hodge filtration, whose graded pieces, by~\Cref{graded pieces of ddR}, are
derived wedge product of relevant analytic cotangent complexes,
we see that the statement about Hodge completed analytic derived de Rham complex
follows from the statement about analytic cotangent complex.

To show $\LL_{B^+/A_0}^\an[1/p] \cong \LL_{B^+/A_0'}^\an[1/p]$,
we appeal to the fundamental triangle of (analytic) cotangent complexes:
\[
\LL_{A_0'/A_0}^\an {\otimes_{A_0'}} B^+ \rra \LL_{B^+/A_0}^\an \rra \LL_{B^+/A_0'}^\an.
\]
Here the tensor product does not need an extra $p$-completion as $\LL_{A_0'/A_0}$ is pseudo-coherent, 
see~\cite[Theorem 7.1.33]{GR03}.
By~\cite[Theorem 7.2.42]{GR03},
the $p$-complete cotangent complex $\LL_{A_0'/A_0}^\an$ satisfies
\[
\LL_{A_0'/A_0}^\an[\frac{1}{p}]=\Omega_{A_0'[\frac{1}{p}]/A_0[\frac{1}{p}]}^{1, \an},
\]
which vanishes as $A_0'[\frac{1}{p}]$ and $A_0[\frac{1}{p}]$ are both equal to $A$.
Therefore the natural map 
\[
\LL_{B^+/A_0}^\an[\frac{1}{p}]\rra \LL_{B^+/A_0'}^\an[\frac{1}{p}]
\]
induced by $A_0 \ra A_0'$ is a quasi-isomorphism.
\end{proof}

We can understand the associated graded algebra of analytic de Rham complex of perfectoid affinoid
algebras over affinoid algebras via the following~\Cref{ddR aff computation}.
Let $K$ be a perfectoid field extension of $k$ that contains $p^n$-roots of unity for all $n\in \NN$.

\begin{theorem}
\label{ddR aff computation}
Let $(A,A^+)$ be a topologically finite type complete Tate ring over $(k,\cO_k)$.
Assume $(B,B^+)$ is a perfectoid algebra containing both $(K,\cO_K)$ and $(A,A^+)$.
Then the graded algebra $\Gr^\ast(\wh\dR_{B/A}^\an)$ 
admits a natural graded quasi-isomorphism to the derived divided power algebra 
$\rmL\Gamma_B^\ast(\Gr^1(\wh\dR_{B/A}^\an))$, 
where the first graded piece fits into a distinguished triangle:
\[
B(1) \rra \Gr^1(\wh\dR_{B/A}^\an) \cong \LL_{B/A}^\an[-1] \rra B\otimes_A \LL_{A/k}^\an,
\]
which is functorial in $(B,B^+)/(A,A^+)$.
In particular, the graded pieces are $B$-pseudo-coherent.
\end{theorem}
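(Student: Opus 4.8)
The plan is to bootstrap from the known description of the Hodge-graded pieces, Equation~\eqref{graded pieces of ddR}, together with the transitivity triangle of analytic cotangent complexes and Illusie's décalage isomorphism. The first and main geometric input I would establish is the computation $\LL_{B/k}^\an \simeq B(1)[1]$, naturally in $B$. Since $B$ (resp.\ $K$) is perfectoid hence uniform \cite[Theorem~6.3]{Sch12}, \Cref{sim construction} lets me compute these analytic cotangent complexes from the chosen rings of definition; feeding $\cO_k \to \cO_K \to B^+$ into the transitivity triangle, the term $\LL_{B^+/\cO_K}^\an$ vanishes because $\cO_K/p \to B^+/p$ is relatively perfect (as for any map of perfectoid rings), so that \Cref{dR when relatively perfect}.(1) applies. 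Hence $\LL_{B/k}^\an \simeq B \otimes_K \LL_{K/k}^\an$, and since $K$ contains all $p$-power roots of unity, $\LL_{K/k}^\an \simeq K(1)[1]$ — this is classical (Fontaine; see also \cite[Section~1]{Bei12} and \cite{GR03}), a generator on the right being $d\log[\varepsilon]$. Base changing along $K \to B$ gives $\LL_{B/k}^\an \simeq B(1)[1]$.

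Next I would feed $k \to A \to B$ into the transitivity triangle of analytic cotangent complexes:
\[
B \otimes_A \LL_{A/k}^\an \longrightarrow \LL_{B/k}^\an \longrightarrow \LL_{B/A}^\an \xrightarrow{\ +1\ }.
\]
Here $\LL_{A/k}^\an \in \scrD^{\le 0}(A)$ by \Cref{GR statements}.(1) (as $A$ is $k$-affinoid), so $B \otimes_A \LL_{A/k}^\an \in \scrD^{\le 0}(B)$; together with $\LL_{B/k}^\an \simeq B(1)[1] \in \scrD^{\le -1}(B)$, the triangle exhibits $\LL_{B/A}^\an$ as the cofiber of a map between connective $B$-complexes, hence $\LL_{B/A}^\an \in \scrD^{\le 0}(B)$ is connective. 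Shifting the triangle by $[-1]$ and rotating, and using $\Gr^1(\wh\dR_{B/A}^\an) \simeq \LL_{B/A}^\an[-1]$ (Equation~\eqref{graded pieces of ddR}) together with $\LL_{B/k}^\an[-1] \simeq B(1)$, I obtain exactly the asserted distinguished triangle
\[
B(1) \longrightarrow \Gr^1(\wh\dR_{B/A}^\an) \longrightarrow B \otimes_A \LL_{A/k}^\an \xrightarrow{\ +1\ },
\]
functorial in $(B,B^+)/(A,A^+)$ since both the transitivity triangle and the identification $\LL_{B/k}^\an \simeq B(1)[1]$ are.

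For the graded-algebra statement, Equation~\eqref{graded pieces of ddR} gives an equivalence of graded $E_\infty$-$B$-algebras $\Gr^\ast(\wh\dR_{B/A}^\an) \simeq \bigoplus_{i \ge 0}(\rmL\wedge^i_B \LL_{B/A}^\an)[-i]$. I would then invoke Illusie's décalage identity \cite[Chapter~I]{Ill72}: for a connective $N \in \scrD(B)$ there is a natural equivalence of graded $E_\infty$-$B$-algebras $\bigoplus_{i\ge0}(\rmL\wedge^i_B N)[-i] \simeq \rmL\Gamma_B^\ast(N[-1])$ (the graded divided-power algebra of $N[-1]$, the right-hand side being the non-abelian derived functor of $F \mapsto \wedge^\ast_B(F[1])$ on finite free $B$-modules, where the exterior algebra of a shifted free module is classically the shifted divided-power algebra). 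Applied to $N = \LL_{B/A}^\an$ — connective by the previous paragraph, so that $\Gr^1(\wh\dR_{B/A}^\an) = N[-1]$ — this yields the natural graded quasi-isomorphism onto $\rmL\Gamma_B^\ast(\Gr^1(\wh\dR_{B/A}^\an))$. Pseudo-coherence then follows from the triangle above: $\LL_{A/k}^\an$ is $A$-pseudo-coherent by \Cref{GR statements}.(1), so $B \otimes_A \LL_{A/k}^\an$ is $B$-pseudo-coherent, and $B(1)[1]$ is perfect, whence $\LL_{B/A}^\an$ is a connective $B$-pseudo-coherent complex; therefore each $\rmL\wedge^i_B \LL_{B/A}^\an$, and hence each $\Gr^i(\wh\dR_{B/A}^\an)$, is $B$-pseudo-coherent.

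The two places where something beyond formal bookkeeping is needed are: the input $\LL_{K/k}^\an \simeq K(1)[1]$ and the vanishing $\LL_{B/K}^\an = 0$ (the fussiest point being the relative perfectness of $\cO_K/p \to B^+/p$, i.e.\ locating the cleanest reference for the vanishing of the $p$-complete cotangent complex of a map of perfectoid rings), and the upgrade of the décalage from an equivalence of underlying objects to one of graded $E_\infty$-algebras — routine via left Kan extension from finite free modules, but it has to be set up carefully so that the multiplicative structures match. I expect the first of these to be the real obstacle, everything else being a formal consequence of the transitivity triangle and Equation~\eqref{graded pieces of ddR}.
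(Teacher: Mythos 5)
Your argument is correct and its skeleton matches the paper's: identify $\Gr^1$ via \eqref{graded pieces of ddR}, compute $\LL_{B/k}^\an \simeq B(1)[1]$, run the transitivity triangle for $k \to A \to B$, and get the higher graded pieces from Illusie's d\'ecalage $\rmL\wedge^i(N[1])[-i] \simeq \rmL\Gamma^i(N)$. The one place you genuinely diverge is the key computation $\LL_{B/k}^\an \simeq B(1)[1]$: the paper factors through $W \to \cO_k \to B^+$ and gets $\LL_{B^+/W}^\an \cong B^+(1)[1]$ from its integral results (\Cref{integral local calculation}, i.e.\ \Cref{dR when relatively perfect} applied to $W \to A_{\mathrm{inf}}(B^+) \to B^+$, where relative perfectness mod $p$ is over the perfect residue field and hence automatic), then kills $\LL_{\cO_k/W}^\an[1/p]$ because $k/W[1/p]$ is finite \'etale; you instead factor through $\cO_k \to \cO_K \to B^+$, quoting the vanishing of the $p$-complete cotangent complex of the perfectoid map $\cO_K \to B^+$ plus Fontaine's computation $\LL_{K/k}^\an \simeq K(1)[1]$ and base changing along $K \to B$. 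Both routes work; the paper's has the advantage of staying entirely inside its own toolkit (and of being manifestly functorial via $\theta$ and $A_{\mathrm{inf}}(B^+)$), while yours outsources two inputs you correctly flag: the assertion that $\cO_K/p \to B^+/p$ is relatively perfect ``as for any map of perfectoid rings'' is true but is not covered by anything in the paper and itself is most cleanly proved via tilting/$A_{\mathrm{inf}}$ (at which point you are essentially redoing the paper's argument), and the identification $\LL_{K/k}^\an \simeq K(1)[1]$ for a general perfectoid $K \supset \mu_{p^\infty}$ over a possibly ramified $k$ is exactly the special case $B = K$ of what is being proved, so citing ``Fontaine'' should be replaced by the $W$-d\'evissage or a precise reference. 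Two further points where your write-up is actually more careful than the paper's: you cannot invoke \Cref{GR statements} for connectivity of $\LL_{B/A}^\an$ since $B$ is not topologically of finite type, and your derivation of connectivity from the triangle (needed as the ``bounded above input'' for d\'ecalage) plus the explicit pseudo-coherence argument via the triangle are correct and fill in steps the paper leaves implicit.
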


Here $B(1)$ denote $\ker(\theta)/\ker(\theta)^2$
where $\theta \colon A_{inf}(B^+)[1/p] \twoheadrightarrow B$ is Fontaine's $\theta$ map.
Our assumption of $(B,B^+)$ containing $(K,\cO_K)$ ensures that this is
(non-canonically) isomorphic to $B$ itself, see~\cite[Lemma 6.3]{Sch13}.
After sheafifying everything, it corresponds to a suitable Tate twist of $B$.

\begin{proof}
The identification $\Gr^1(\wh\dR_{B/A}^\an) \cong \LL_{B/A}^\an[-1]$
is already spelled out by~\Cref{graded pieces of ddR}.

Let us fix a single choice of pair of rings of definition $(A_0,B^+)$ in $\cC_{B/A}$.
Here $A_0$ is topologically finitely presented over $\cO_k$, 
and $B^+$ contains $\cO_K$ for 
$K$ a perfectoid field containing all $p^n$-th roots of unity.

Consider the following triple:
\(
\cO_k \rra A_0 \rra B^+,
\)
it induces the following triangle
\[
\LL_{A_0/\cO_k}^\an \otimes_{A_0} B^+ \rra \LL_{B^+/\cO_k}^\an \rra \LL_{B^+/A_0}^\an.
\]
Here we again have used the pseudo-coherence~\cite[Theorem 7.1.33]{GR03} of $\LL_{A_0/\cO_k}^\an$.
We need to show $\LL_{B^+/\cO_k}^\an[1/p] \cong B(1)[1]$.
To that end, let $W$ be the Witt ring of the residue field of $\cO_k$.
By looking at the triple $W \to \cO_k \to B^+$, we get another sequence
\[
\LL_{B^+/W}^\an \cong B^+(1)[1] \rra \LL_{B^+/\cO_k}^\an \rra \LL_{\cO_k/W}^\an \otimes_{\cO_k} B^+[1],
\]
where the first identification follows from~\Cref{integral local calculation},
and the tensor product does not an extra completion again by coherence of $\LL_{\cO_k/W}^\an$.
Since $k/W[1/p]$ is finite \'{e}tale, we conclude that $\LL_{\cO_k/W}^\an[1/p] = 0$
by~\cite[Theorem 7.2.42]{GR03}.
This ends the proof of the structure of $\LL_{B/A}^\an$.

Now we turn to the higher graded piece.
The $i$-th graded pieces $\Gr^i(\wh\dR_{B/A}^\an)$ is quasi-isomorphic to
$(\rmL\wedge^i \LL_{B/A}^\an)[-i]$, which by rewriting in terms of the first graded piece is
\[
(\rmL\wedge^i(\Gr^1(\wh\dR_{B/A}^\an)[1]))[-i].
\]
So by the relation between the derived wedge product and the derived divided power funcotr 
(with bounded above input, see~\cite[V.4.3.5]{Ill71}), we get
\[
\Gr^i(\wh\dR_{B/A}^\an)\cong \rmL\Gamma_B^i(\Gr^1(\wh\dR_{B/A})),
\]
and we get the divided power algebra structure of the graded algebra $\Gr^\ast(\wh\dR_{B/A}^\an)$.
\end{proof}

Consequently we get cohomological bounds for perfectoid affinoid algebras over
various types of affinoid algebras.
The notion of local complete intersection and embedded codimension
(in the situation that we are working with) is discussed in the Appendix.

\begin{corollary}
\label{cohomological bound aff}
Let $(B,B^+)/(A,A^+)$ be as in the statement of~\Cref{ddR aff computation}.
Then we have
\begin{enumerate}
    \item $\wh\dR_{B/A}^\an \in \scrD^{\leq 0}(A)$;
    \item if $A/k$ is smooth, then $\wh\dR_{B/A}^\an \in \scrD^{[0,0]}(A)$;
    \item if $A/k$ is local complete intersection with embedded codimension $c$,
    then $\wh\dR_{B/A}^\an \in \scrD^{[-c,0]}(A)$.
\end{enumerate}
\end{corollary}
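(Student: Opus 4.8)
The plan is to deduce all three cohomological bounds from the structure of the graded pieces established in~\Cref{ddR aff computation}, together with the fact that $\wh\dR_{B/A}^\an$ is derived complete with respect to the Hodge filtration. Since $\wh\dR_{B/A}^\an = \lim_i (\wh\dR_{B/A}^\an/\Fil^{i+1}_{\mathrm{H}})$ and the completion is a limit of a tower whose successive fibers are the graded pieces $\Gr^i(\wh\dR_{B/A}^\an)$, it suffices to control the cohomological amplitude of each $\Gr^i(\wh\dR_{B/A}^\an)$ uniformly in $i$ and then observe that the $\lim^1$-type contributions do not worsen the bound. Concretely, if each $\Gr^i(\wh\dR_{B/A}^\an) \in \scrD^{[a,b]}(A)$ with $a,b$ independent of $i$, then each finite-stage quotient lies in $\scrD^{[a,b]}(A)$, and the derived limit lies in $\scrD^{[a,b+1]}(A)$ in general; but here $b=0$ in all cases and one checks the extra $+1$ does not appear because the transition maps on $\rmH^0$ are surjective (the tower is essentially the Hodge tower of a cdga concentrated in the right range), so in fact $\wh\dR_{B/A}^\an \in \scrD^{[a,0]}(A)$.

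First I would record, from~\Cref{ddR aff computation}, that $\Gr^i(\wh\dR_{B/A}^\an) \cong \rmL\Gamma_B^i(\LL_{B/A}^\an[-1])$, and that $\LL_{B/A}^\an[-1]$ sits in the functorial triangle $B(1) \to \LL_{B/A}^\an[-1] \to B\otimes_A \LL_{A/k}^\an$. For (1): by~\Cref{GR statements}.(1), $\LL_{A/k}^\an \in \scrD^{\leq 0}(A)$, hence $B\otimes_A\LL_{A/k}^\an \in \scrD^{\leq 0}(B)$, and $B(1)$ is in degree $0$, so $\LL_{B/A}^\an[-1] \in \scrD^{\leq 0}(B)$; the derived divided power functor $\rmL\Gamma_B^i$ preserves connective objects, so every $\Gr^i \in \scrD^{\leq 0}(A)$, giving $\wh\dR_{B/A}^\an \in \scrD^{\leq 0}(A)$. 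For (2): if $A/k$ is smooth then $\LL_{A/k}^\an \simeq \Omega^{1,\an}_{A/k}[0]$ is a finite projective $A$-module in degree $0$ by~\Cref{GR statements}.(3), so $\LL_{B/A}^\an[-1]$ is a finite projective $B$-module in degree $0$ (extension of projectives), whence $\rmL\Gamma_B^i$ of it is again projective in degree $0$; thus each $\Gr^i \in \scrD^{[0,0]}$ and the Hodge tower has surjective transition maps on $\rmH^0$, so $\wh\dR_{B/A}^\an \in \scrD^{[0,0]}(A)$.

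For (3): if $A/k$ is a local complete intersection with embedded codimension $c$, then by the discussion in the Appendix the analytic cotangent complex $\LL_{A/k}^\an$ is supported in cohomological degrees $[-1,0]$ — more precisely, locally it is represented by a two-term complex $[\,N \to \Omega\,]$ with $N$ the conormal module of rank $\leq c$ and $\Omega$ finite projective. Then $\LL_{B/A}^\an[-1]$ lies in $\scrD^{[-1,0]}(B)$, and I would compute $\rmL\Gamma_B^i$ of a complex in amplitude $[-1,0]$: writing $\LL_{B/A}^\an[-1]$ (locally) as a shifted cone, one gets a décalage/Koszul-type filtration on $\rmL\Gamma_B^i$ whose graded pieces are $\rmL\Gamma_B^{i-j}(P)\otimes \rmL\wedge_B^j(Q[1])$ for $P$ in degree $0$ and $Q$ the degree-$(-1)$ part, i.e. $\rmL\wedge_B^j(Q[1]) = (\Gamma_B^j Q)[j]$; since $Q$ has rank $\leq c$, $\Gamma^j_B Q = 0$ for $j > c$, so the total object is supported in degrees $[-c,0]$. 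Hence each $\Gr^i(\wh\dR_{B/A}^\an) \in \scrD^{[-c,0]}(A)$, uniformly in $i$, and therefore $\wh\dR_{B/A}^\an \in \scrD^{[-c,0]}(A)$ after passing to the completed limit.

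The main obstacle I anticipate is the bookkeeping in step (3): identifying $\rmL\Gamma_B^i$ of a $[-1,0]$-amplitude complex and checking that the resulting filtration is finite with the claimed vanishing range (this is where the rank bound $c$ and the l.c.i.\ hypothesis genuinely enter via $\Gamma^j$ of a finite projective module vanishing above its rank). A secondary subtlety is verifying that passing from the uniform bound on graded pieces to the bound on the Hodge-completed object does not lose a degree on the top — this needs the observation that the transition maps in the Hodge tower are surjective on $\rmH^0$ (clear from the cdga model), so that $\rmH^0$ is a genuine limit with no $\lim^1$, and connectivity of the fiber terms handles the bottom.
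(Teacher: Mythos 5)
Your overall strategy is the same as the paper's: reduce to the graded pieces of the Hodge filtration using completeness, identify $\Gr^i(\wh\dR_{B/A}^\an) \cong \rmL\Gamma_B^i(\Gr^1)$ via~\Cref{ddR aff computation}, and feed in the two-term local presentation of $\LL^\an_{A/k}$ supplied by the l.c.i.\ hypothesis (\Cref{embedded codimension control}). Your extra care that the top cohomological degree does not increase when passing to the completed limit (surjectivity on $\rmH^0$ along the Hodge tower, hence no $\mathrm{R}^1\lim$ contribution) is a point the paper leaves implicit, and your direct argument for (2) is a harmless variant of the paper's ``(2) follows from (3)''.

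However, step (3) as written contains a genuine error. In the filtration of $\rmL\Gamma_B^i$ of the extension $P \to \Gr^1 \to Q[1]$ the graded pieces are $\rmL\Gamma_B^{i-j}(P)\otimes_B \rmL\Gamma_B^j(Q[1])$ --- the same functor appears in both factors, not $\rmL\wedge_B^j(Q[1])$ --- and, more importantly, your vanishing claim ``$\Gamma^j_B Q = 0$ for $j>c$'' is false: divided powers of a nonzero finite projective module are nonzero in every degree $j$; it is the exterior power $\wedge^j_B Q$ that vanishes above the rank. Taken literally, your d\'ecalage $\rmL\wedge_B^j(Q[1]) \simeq (\Gamma_B^j Q)[j]$ produces nonvanishing terms in cohomological degree $-j$ for all $j$, so no lower bound would follow at all. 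The repair is exactly the paper's maneuver: since $B \supset \mathbb{Q}$ one has $\rmL\Gamma_B^j \simeq \rmL\Symm_B^j$, and the correct d\'ecalage $\rmL\Symm_B^j(Q[1]) \simeq (\rmL\wedge_B^j Q)[j]$ then gives the vanishing for $j>c$. (The paper packages this by using the extension filtration to reduce to $\rmL\Symm_B^j(B\otimes_A\LL^\an_{A/k}) \cong B\otimes_A \rmL\Symm_A^j(\LL^\an_{A/k})$ and quoting the Tor-amplitude bound of~\Cref{embedded codimension control}, which also spares you from having to make your ``locally over $B$'' reduction precise.)
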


\begin{proof}
Since the out put of $\wh\dR^\an$ is always derived complete with respect to its Hodge filtration,
it suffices to show these statements for the graded pieces of Hodge filtration.

For (1), this follows from the fact that $\LL_{B/A}^\an \in \scrD^{\leq 0}(B)$.
(2) follows from (3) as smooth affinoid algebra has embedded codimension $0$.

As for (3), we check the graded pieces of Hodge filtration in this case is in $\scrD^{[-c,0]}$.
In fact, we shall show that the graded pieces, as objects in $\scrD(B)$, 
have Tor amplitude $[-c,0]$.
First since $B$ contains $\mathbb{Q}$, we have
\[
\Gr^i(\wh\dR_{B/A}^\an) \cong \rmL\Gamma_B^i(\Gr^1(\wh\dR_{B/A}^\an))
\cong \rmL\Symm_B^i(\Gr^1(\wh\dR_{B/A}^\an)).
\]
Using the triangle in~\Cref{ddR aff computation}, it suffices to show
$\rmL\Symm_B^j(B\otimes_A \LL_{A/k}^\an)$ have Tor amplitude $[-c,0]$ for all $j$.
Since $\rmL\Symm_B^j(B\otimes_A \LL_{A/k}^\an) \cong B \otimes_A \rmL\Symm_A^j(\LL_{A/k}^\an)$,
we are done by~\Cref{embedded codimension control}.
\end{proof}

\subsection{Poincar\'{e} sequence}

In this subsection we explain the Poincar\'{e} sequence for Hodge completed 
de Rham complexes.

\begin{lemma}
\label{algebraic Poincare mod Filn}
Let $B \to C$ be an $A$-algebra morphism.
Then for every $j \in \mathbb{N}$, the Katz--Oda filtration on $\dR_{C/A}$
induces a functorial strict exact filtration on $\dR_{C/A}/\Fil^j_{\mathrm{H}}$,
witnessing the following sequence:
\[
\dR_{C/A}/\Fil^j \to \dR_{C/B}/\Fil^j \xrightarrow{\nabla} \dR_{C/B}/\Fil^{j-1}
\otimes_B \st_1(\LL_{B/A})
\xrightarrow{\nabla} \cdots \xrightarrow{\nabla} 
\dR_{C/B}/\Fil^1 \otimes_B \st_{j-1}(\rmL\wedge^{j-1}\LL_{B/A}).
\]
Here $\dR_{C/A}$ and $\dR_{C/B}$ are equipped with Hodge filtrations.

Moreover $\Fil^i_{\mathrm{KO}}(\dR_{C/A}/\Fil^j_{\mathrm{H}}) = 0$ whenever $i > j$.
\end{lemma}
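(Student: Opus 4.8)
The plan is to deduce the whole statement by applying the quotient functor $(-)/\Fil^j_{\mathrm{H}}$ to the structures already produced in~\Cref{properties of KO filtration}. Recall from~\Cref{Katz--Oda filtration} that the Katz--Oda filtration on $\dR_{C/A}$ is an object $G(\bullet,\bullet)\in\Fun((\NN\times\NN)^{\op},\scrD(A))$, bifiltered by the Katz--Oda index $i$ (first variable) and by a Hodge index (second variable): $\Fil^i_{\mathrm{KO}}(\dR_{C/A})=G(i,0)$, $G(0,\bullet)$ is the Hodge filtration on $\dR_{C/A}$, and each $\Fil^i_{\mathrm{KO}}(\dR_{C/A})$ carries its own (mixed) Hodge filtration $G(i,\bullet)$. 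First I would define the induced Katz--Oda filtration on $\dR_{C/A}/\Fil^j_{\mathrm{H}}$ by truncating the Hodge direction of $G$ at level $j$, namely
\[
\Fil^i_{\mathrm{KO}}\bigl(\dR_{C/A}/\Fil^j_{\mathrm{H}}\bigr)\;:=\;\mathrm{cofib}\bigl(G(i,j)\ra G(i,0)\bigr),
\]
equipped with its residual Hodge filtration; this is manifestly functorial in the $A$-algebra map $B\ra C$, and at $i=0$ it recovers $\dR_{C/A}/\Fil^j_{\mathrm{H}}$.

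Next I would compute the graded pieces. Applying the octahedral axiom to the square with horizontal arrows $G(i+1,j)\ra G(i+1,0)$ and $G(i,j)\ra G(i,0)$, and using that formation of Katz--Oda graded pieces (a finite colimit in the first variable) commutes with restriction of the Hodge filtration, one gets a filtered identification $\Gr^i_{\mathrm{KO}}(\dR_{C/A}/\Fil^j_{\mathrm{H}})\cong\Gr^i_{\mathrm{KO}}(\dR_{C/A})/\Fil^j_{\mathrm{H}}$. Plugging in the filtered isomorphism $\Gr^i_{\mathrm{KO}}(\dR_{C/A})\cong\dR_{C/B}\otimes_B\st_i\bigl((\rmL\wedge^i\LL_{B/A})[-i]\bigr)$ of~\Cref{properties of KO filtration}.(1) and reading off its mixed Hodge filtration by a direct computation of the Day convolution (equivalently via $\Gr^\ast(M\otimes_R N)\cong\Gr^\ast M\otimes_{\Gr^\ast R}\Gr^\ast N$): since $\st_i$ concentrates $\rmL\wedge^i\LL_{B/A}[-i]$ in Hodge weight $i$, the $m$-th Hodge step of the right-hand side is $\Fil^{m-i}_{\mathrm{H}}(\dR_{C/B})\otimes_B\rmL\wedge^i\LL_{B/A}[-i]$, where $\Fil^r_{\mathrm{H}}(\dR_{C/B})$ means $\dR_{C/B}$ for $r\leq 0$. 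Taking the cofiber along $\Fil^j_{\mathrm{H}}$ and shifting by $[i]$ yields, for $i<j$,
\[
\Gr^i_{\mathrm{KO}}\bigl(\dR_{C/A}/\Fil^j_{\mathrm{H}}\bigr)[i]\;\cong\;\bigl(\dR_{C/B}/\Fil^{j-i}_{\mathrm{H}}\bigr)\otimes_B\st_i\bigl(\rmL\wedge^i\LL_{B/A}\bigr),
\]
and $0$ for $i\geq j$. Combined with~\Cref{properties of KO filtration}.(2) — which identifies the connecting maps of the Katz--Oda filtration of $\dR_{C/A}$ as the $\dR_{C/A}$-linear, Newton--Leibniz-satisfying operators $\nabla$ — and the fact that the connecting maps of the quotient filtration are just the cofibers of those along $\Fil^j_{\mathrm{H}}$, this shows that our filtration witnesses exactly the claimed sequence (with $\nabla$ now $\dR_{C/A}/\Fil^j_{\mathrm{H}}$-linear).

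It remains to get the vanishing and strictness, and this is where the truncation pays off. I would observe that $\Fil^b_{\mathrm{H}}(\dR_{C/A})/\Fil^j_{\mathrm{H}}$ carries a \emph{finite} Hodge filtration with graded pieces $\Gr^{b'}_{\mathrm{H}}(\dR_{C/A})$ for $b\leq b'\leq j-1$, and that by~\Cref{properties of KO filtration}.(3) each such piece is killed by $\Fil^i_{\mathrm{KO}}$ whenever $i>b'$. Since $i\geq j$ forces $i>b'$ for every $b'\leq j-1$, the Katz--Oda filtration annihilates every Hodge-graded piece of $\dR_{C/A}/\Fil^j_{\mathrm{H}}$ (and of each $\Fil^b_{\mathrm{H}}(\dR_{C/A})/\Fil^j_{\mathrm{H}}$); as the Hodge filtration in play is finite, this forces $\Fil^i_{\mathrm{KO}}(\dR_{C/A}/\Fil^j_{\mathrm{H}})=0$ for $i\geq j$, a fortiori for $i>j$. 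Hence the Katz--Oda filtration on $\dR_{C/A}/\Fil^j_{\mathrm{H}}$ is a finite filtration in each Hodge degree, hence complete, hence strict exact in the sense of~\Cref{Filtration convention}.

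I expect the only delicate step to be the index bookkeeping in the second paragraph: checking, at the level of \emph{filtered} objects rather than just underlying complexes, that the $i$-th graded piece picks up precisely the quotient $\dR_{C/B}/\Fil^{j-i}_{\mathrm{H}}$, with the weight shift coming from $\st_i$. If the Day-convolution computation is awkward to run directly, the clean fallback is to reduce the entire statement, by left Kan extension, to the case where $B$ is a polynomial $A$-algebra and $C$ a polynomial $B$-algebra (as in the proofs of~\Cref{ddR base change formula} and~\Cref{properties of KO filtration}), using that $(-)/\Fil^j_{\mathrm{H}}$ commutes with the relevant colimits; there $\dR_{C/A}/\Fil^j_{\mathrm{H}}$ is the brutally truncated de Rham complex $\Omega^{\bullet}_{C/A}/\Omega^{\geq j}_{C/A}$ with the classical Katz--Oda filtration, and all the assertions are transparent.
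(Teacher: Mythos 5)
Your proposal is correct and follows essentially the same route as the paper: pass to the induced Katz--Oda filtration on $\dR_{C/A}/\Fil^j_{\mathrm{H}}$, deduce the vanishing $\Fil^i_{\mathrm{KO}}=0$ for $i\geq j$ from \Cref{properties of KO filtration}~(3) together with the finiteness of the remaining Hodge filtration, and conclude strict exactness from the resulting completeness. The only difference is that you spell out the graded-piece bookkeeping (identifying $\Gr^i_{\mathrm{KO}}$ of the quotient with $\dR_{C/B}/\Fil^{j-i}\otimes_B\st_i(\rmL\wedge^i\LL_{B/A})[-i]$), which the paper leaves implicit as a consequence of \Cref{properties of KO filtration}~(1)--(2).
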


\begin{proof}
We consider the induced Katz--Oda filtration on
$\dR_{C/A}/\Fil^i_{\mathrm{H}}$. 
Since we have mod out Hodge filtration, the~\Cref{properties of KO filtration} (3)
implies the desired vanishing of the $\Fil^i_{\mathrm{KO}}$ when $i > j$,
and this in turn implies the strict exactness of these filtrations.
\end{proof}

Specializing to the $p$-adic situation, we get the following:
\begin{lemma}
\label{padic Poincare mod Filn}
Let $(A,A^+) \to (B,B^+) \to (C,C^+)$ be a triangle of complete Huber rings
over $k$. Then for each $j \in \NN$,
we have a functorial strict exact filtration on $\dR_{C/A}^\an/\Fil^j$,
still denoted by $\Fil^i_{\mathrm{KO}}$, witnessing the following sequence:
\[
\dR_{C/A}^\an/\Fil^j \to \dR_{C/B}^\an/\Fil^j \xrightarrow{\nabla} 
\dR_{C/B}^\an/\Fil^{j-1}
\wh\otimes_B \st_1(\LL_{B/A}^\an)
\xrightarrow{\nabla} \cdots \xrightarrow{\nabla} 
\dR_{C/B}^\an/\Fil^1 \wh\otimes_B \st_{j-1}(\rmL\wedge^{j-1}\LL_{B/A}^\an).
\]
Here $\dR_{C/A}^\an/\Fil^j$ and $\dR_{C/B}^\an/\Fil^j$ are equipped with Hodge filtrations.

Moreover $\Fil^i_{\mathrm{KO}}(\dR_{C/A}^\an/\Fil^j) = 0$ whenever $i > j$.
\end{lemma}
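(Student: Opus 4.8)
The plan is to deduce this $p$-adic statement from the already-established algebraic version, \Cref{algebraic Poincare mod Filn}, by passing to the colimit over rings of definition and then taking a suitable $p$-completion, exactly as the analytic constructions in \Cref{cot construction aff} and \Cref{ddR construction aff} are defined. First I would fix a compatible system of pairs of rings of definition: given the triangle $(A,A^+) \to (B,B^+) \to (C,C^+)$, for each triple $(A_0,B_0,C_0)$ with $A_0 \to B_0 \to C_0$ rings of definition compatible with the maps, \Cref{algebraic Poincare mod Filn} furnishes a functorial strict exact Katz--Oda filtration on $\dR_{C_0/A_0}/\Fil^j_{\mathrm{H}}$ witnessing the sequence there, together with the vanishing $\Fil^i_{\mathrm{KO}}(\dR_{C_0/A_0}/\Fil^j) = 0$ for $i>j$. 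These triples form a filtered category cofinal in the relevant indexing categories, so I can invert $p$ and take the filtered colimit; since $\Gr^i$ and the formation of the Katz--Oda filtration commute with filtered colimits (as does inverting $p$), the colimit of the strict exact filtrations is again a strict exact filtration, now on $\dR_{C/A}^\an/\Fil^j$ (before $p$-completion), witnessing the colimit of the displayed sequences.

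Next I would identify the terms of that colimit sequence. The term $\dR_{C/B}^\an/\Fil^{j-i} \otimes_B \st_i(\rmL\wedge^i \LL_{B/A})$ at the level of rings of definition becomes, after inverting $p$ and taking colimits, $\dR_{C_0/B_0}/\Fil^{j-i} \otimes_{B_0} \st_i(\rmL\wedge^i\LL_{B_0/A_0})$ colimited and $p$-inverted; by \eqref{graded pieces of ddR} and \Cref{cot construction aff} this is $\dR_{C/B}^\an/\Fil^{j-i} \otimes_B \st_i(\rmL\wedge^i \LL_{B/A}^\an)$, with the analytic cotangent complex appearing. The connecting maps $\nabla$, being $\dR_{C/A}$-linear and satisfying the Newton--Leibniz rule by \Cref{properties of KO filtration}(2), pass through the colimit, so I obtain the desired sequence over the uncompleted analytic derived de Rham complexes, still strict exact, still with $\Fil^i_{\mathrm{KO}} = 0$ for $i > j$.

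Finally I would apply derived $p$-completion. Here the key point is that each $\dR_{C/A}^\an/\Fil^j$ is already built from finitely many Hodge graded pieces, each of which is a (shifted) derived wedge power of an analytic cotangent complex, and the tensor factors $\st_i(\rmL\wedge^i\LL_{B/A}^\an)$ are likewise bounded; completing a strict exact filtration with finitely many steps (we know $\Fil^i_{\mathrm{KO}}$ vanishes for $i>j$, so this is a finite filtration) preserves strict exactness, since a finite limit/colimit commutes with derived completion. So the $p$-completed Katz--Oda filtration remains strict exact in the sense of \Cref{Filtration convention} and witnesses the asserted sequence; the decoration $\wh\otimes$ is precisely what records that we have taken the completed tensor product compatibly with these finite filtrations. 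The vanishing statement $\Fil^i_{\mathrm{KO}}(\dR_{C/A}^\an/\Fil^j) = 0$ for $i>j$ is inherited verbatim from the algebraic case. The main obstacle I anticipate is bookkeeping: making sure the colimit over rings of definition is taken over genuinely cofinal compatible triples and that the ``$\wh\otimes$'' appearing in the statement really is the completion-of-the-tensor-product matching \Cref{mixed filtration}, rather than some a priori different completion — but this is a matter of unwinding the definitions of \Cref{ddR construction aff} and \Cref{mixed filtration} rather than a substantive difficulty, since everything in sight is built from bounded (pseudo-coherent) pieces.
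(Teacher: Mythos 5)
Your overall strategy is the paper's: deduce the lemma from \Cref{algebraic Poincare mod Filn} by applying the operations that define the analytic objects, note that these operations are exact so the vanishing $\Fil^i_{\mathrm{KO}}=0$ for $i>j$ is inherited, and get strict exactness from that finiteness (a terminating filtration is automatically complete). The problem is the order in which you apply the operations, and taken literally two of your steps fail. The analytic constructions (\Cref{cot construction aff}, \Cref{ddR construction aff}) derived $p$-complete the integral objects \emph{for each triple of rings of definition first}, then invert $p$, then take the filtered colimit. You instead invert $p$ and colimit first and postpone ``derived $p$-completion'' to the end: at that point $p$ acts invertibly, so derived $p$-completion annihilates the object (and if you mean some other completion, none is needed, since the Katz--Oda filtration here has finitely many steps). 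More substantively, your intermediate identification of the graded pieces is not available in your order: without the integral $p$-completion, the colimit of $\dR_{C_0/B_0}/\Fil^{j-i}\otimes_{B_0}\st_i(\rmL\wedge^i\LL_{B_0/A_0})[1/p]$ produces $\colim\,\rmL\wedge^i\LL_{B_0/A_0}[1/p]$, not $\rmL\wedge^i\LL_{B/A}^\an$; these differ in general (this is precisely why the analytic cotangent complex is introduced), so the displayed sequence with $\LL_{B/A}^\an$ does not drop out of your colimit.

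The repair is exactly the paper's order of operations: for each triangle of rings of definition $A_0\to B_0\to C_0$, derived $p$-complete the Katz--Oda filtration furnished by \Cref{algebraic Poincare mod Filn}, invert $p$, and then take the colimit over all such triples. All three operations are (derived-)exact, so the vanishing $\Fil^i_{\mathrm{KO}}=0$ for $i>j$ persists, and this vanishing gives completeness of the induced filtrations on the Hodge-graded pieces, i.e.\ strict exactness in the sense of \Cref{Filtration convention}; the $\wh\otimes_B$ in the statement then records the $p$-completed tensor performed at the integral level, not a completion carried out after $p$ has been inverted. With that reordering your argument coincides with the paper's proof; your cofinality and functoriality bookkeeping and your use of \Cref{properties of KO filtration}(2) for the properties of $\nabla$ are fine.
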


\begin{proof}
For any triangle of rings of definition $A_0 \to B_0 \to C_0$,
we $p$-complete the filtration from~\Cref{algebraic Poincare mod Filn}
and invert $p$, 
then we take the colimit over all triangles of such triples of rings of definition
to get the filtration sought after.
Since all the operations involved are (derived-)exact, 
the resulting filtration still has vanishing: $\Fil^i_{\mathrm{KO}} = 0$ whenever $i > j$,
and this again implies the strict exactness.
\end{proof}

In the setting of the above Lemma,
after taking limit with $j$ going to $\infty$, 
we get the following:
\begin{corollary}[Poincare Lemma]
\label{Poincare Lemma}
Let $(A,A^+) \to (B,B^+) \to (C,C^+)$ be a triangle of complete Huber rings
over $k$.
Then there is a functorial strict exact filtration on $\wh\dR_{C/A}^\an$
witnessing the following sequence
\[
\label{true Poincare sequence}
\tag{\epsdice{3}}
\wh\dR_{C/A}^\an \rra \wh\dR_{C/B}^\an \xrightarrow{\nabla} 
\wh\dR_{C/B}^\an \hat{\otimes}_B \st_1(\LL_{B/A}^\an) \to \cdots.
\]

The $\nabla$'s are $\wh\dR_{C/A}^\an$-linear and satisfy Newton--Leibniz rule.
\end{corollary}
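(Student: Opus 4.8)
The plan is to deduce the statement by passing to the limit $j\to\infty$ in the strict exact Katz--Oda filtrations produced by \Cref{padic Poincare mod Filn}. First I would recall that, by definition of the Hodge completion (\Cref{ddR construction aff}), the underlying object of $\wh\dR_{C/A}^\an$ is $\lim_j \dR_{C/A}^\an/\Fil^j_{\mathrm{H}}$, a limit formed inside $\wh\DF(A)$, which is closed under limits. The filtrations $\Fil^\bullet_{\mathrm{KO}}$ on the quotients $\dR_{C/A}^\an/\Fil^j_{\mathrm{H}}$ from \Cref{padic Poincare mod Filn} are functorial in $j$ and compatible with the transition maps $\dR_{C/A}^\an/\Fil^{j+1}_{\mathrm{H}}\to\dR_{C/A}^\an/\Fil^{j}_{\mathrm{H}}$, being obtained via \Cref{mixed filtration} as Hodge truncations of a single bifiltered object. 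Hence $\lim_j$ produces a filtration $\Fil^\bullet_{\mathrm{KO}}$ on $\wh\dR_{C/A}^\an$ with $\Fil^0_{\mathrm{KO}}=\wh\dR_{C/A}^\an$, and this is the candidate filtration.

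Next I would identify its graded pieces. Since $\Gr^i_{\mathrm{KO}}$, being a cofiber (hence a shifted fiber) in a stable setting, commutes with the limit in $j$, \Cref{padic Poincare mod Filn} gives
\[
\Gr^i_{\mathrm{KO}}(\wh\dR_{C/A}^\an)\;\simeq\;\lim_j\Bigl(\dR_{C/B}^\an/\Fil^{j-i}_{\mathrm{H}}\;\hat{\otimes}_B\;\st_i\bigl((\rmL\wedge^i\LL_{B/A}^\an)[-i]\bigr)\Bigr)\;\simeq\;\wh\dR_{C/B}^\an\;\hat{\otimes}_B\;\st_i\bigl((\rmL\wedge^i\LL_{B/A}^\an)[-i]\bigr),
\]
where the last step uses that $\st_i$ is a finite filtration, so that completing the mixed-filtration tensor product of \Cref{mixed filtration} amounts to Hodge-completing $\dR_{C/B}^\an$. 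These are exactly the terms of the sequence in the statement; the connecting maps are the limits of the $\nabla$'s of \Cref{padic Poincare mod Filn}, and their $\wh\dR_{C/A}^\an$-linearity and the Newton--Leibniz rule follow formally from the multiplicativity of $\Fil^\bullet_{\mathrm{KO}}$ (preserved by Hodge completion), exactly as in the proof of \Cref{properties of KO filtration}(2).

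It then remains to verify strict exactness in the sense of \Cref{Filtration convention}: for each $j\in\NN$ the object $\Fil^j_{\mathrm{H}}(\wh\dR_{C/A}^\an)$ should be complete for the induced Katz--Oda filtration. The decisive input is that the Katz--Oda filtration on each Hodge graded piece $\Gr^j_{\mathrm{H}}(\wh\dR_{C/A}^\an)$ vanishes in KO-degrees $>j$ --- the analytic analogue of \Cref{properties of KO filtration}(3), which survives the colimit over rings of definition and the $p$-completion and is already visible in the vanishing $\Fil^i_{\mathrm{KO}}(\dR_{C/A}^\an/\Fil^{j}_{\mathrm{H}})=0$ for $i>j$ recorded in \Cref{padic Poincare mod Filn} --- so that the Katz--Oda filtration is \emph{finite} on every Hodge graded piece. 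A d\'{e}vissage along the (complete) Hodge filtration, graded piece by graded piece, then yields the required completeness. All constructions in sight being functorial in the triple $(A,A^+)\to(B,B^+)\to(C,C^+)$, the resulting filtration and sequence are functorial as claimed.

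The step I expect to be the main obstacle is this last one: interchanging the Hodge-completion limit (in $j$) with the Katz--Oda limit (in $i$) while keeping strict exactness. The sequence does not terminate in the $i$-direction in general, since the exterior powers of $\LL_{B/A}^\an$ need not vanish, so there is no global finiteness available, and one must genuinely exploit that the Katz--Oda filtration becomes finite only after restricting to an individual Hodge graded piece.
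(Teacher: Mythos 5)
Your proposal is correct and follows essentially the same route as the paper: pass to the limit in $j$ of the strict exact Katz--Oda filtrations on $\dR_{C/A}^\an/\Fil^j$ from \cref{padic Poincare mod Filn}, identify $\Gr^i_{\mathrm{KO}}$ of the limit with $\wh\dR_{C/B}^\an\,\wh\otimes_B\,\st_i((\rmL\wedge^i\LL_{B/A}^\an)[-i])$, and get linearity and the Newton--Leibniz rule for $\nabla$ from multiplicativity as in \cref{properties of KO filtration}(2). Your final completeness/dévissage step is just a slightly more spelled-out version of the paper's remark that an inverse limit of complete filtrations is complete, so the anticipated obstacle is not a real one.
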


\begin{proof}
Take limit in $j$ of the Katz--Oda filtrations on $\dR_{C/A}^\an/\Fil^j$
in~\Cref{padic Poincare mod Filn} gives the desired filtration.
Indeed, inverse limit of complete filtrations is again complete.
Moreover we have
\[
\Gr^i_{\mathrm{KO}}(\wh\dR_{C/A}^\an) \cong \lim_j \Gr^i_{\mathrm{KO}}(\dR_{C/A}^\an/\Fil^j)
\cong \lim_j \left(\dR_{C/B}^\an/\Fil^{j-i} 
\wh\otimes_B \st_{i}(\rmL\wedge^{i}\LL_{B/A}^\an)[-i]\right)
\cong \wh\dR_{C/B}^\an \wh\otimes_B \st_{i}(\rmL\wedge^{i}\LL_{B/A}^\an)[-i],
\]
so we get the statement about the sequence
that this filtration is witnessing.

Lastly the statement about $\nabla$ is the consequence of a general statement about
multiplicative filtrations on $E_\infty$-algebras, 
see the proof of~\Cref{properties of KO filtration} (2).
\end{proof}

\begin{remark}
In fact, the discussion of the Poincar'e sequence above could be obtained via a product formula
\[
\wh\dR_{C/A} \wh\otimes_{\wh\dR_{B/A}} B \cong \wh\dR_{C/B},
\]
similar to the discussion in subsection \Cref{ddR for a triple}.
Here the formula can be obtained via a filtered completion, by $p$-completing the formula
in~\Cref{ddR base change formula} and inverting $p$.

We mention that this formula could also be proved by applying the symmetric monoidal functor
$\Gr^*$ and checking the graded pieces, where the claim is reduced to the distinguished
triangle of analytic cotangent complexes for a triple of Huber pairs.
\end{remark}

\subsection{Rational de Rham sheaves}
In this subsection, we shall apply the construction of the 
(Hodge completed) analytic derived de Rham complexes to the triangle of sheaves of Huber rings 
$(k,\cO_k) \ra (\nu^{-1}\cO_{X}, \nu^{-1}\cO_{X}^+)\ra (\wh\cO_X, \wh\cO_X^+)$
on the pro-\'etale site,
where $\nu \colon X_{\pe} \to X$ is the standard map of sites.
The procedure is similar to what we did in~\Cref{Integral de Rham sheaves},
except now we allow $X$ to be locally complete 
intersection~\footnote{See Appendix for the notion of local complete intersection that we are using here.}
over $k$, and we shall use the unfolding
as discussed in~\Cref{unfolding}.

Let $K$ be a perfectoid field extension of $k$ that contains $p^n$-roots of unity for all $n\in \NN$.
There is a subcategory $X_{\pe}^\omega \subset X_\pe$ consisting of affinoid
perfectoid objects $U = \Spa(B,B^+) \in X_{K,\pe}$ whose image in $X$ is contained in
an affinoid open $\Spa(A,A^+) \subset X$.
The class of such objects form a basis for the pro-\'{e}tale topology
by (the proof of)~\cite[Proposition 4.8]{Sch13}.

\begin{proposition}
\label{hypersheaf properties}
Let $U = \Spa(B,B^+) \in X_{\pe}^\omega$,
choose $\Spa(A,A^+) \subset X$ such that the image of $U$ in $X$ is contained in $\Spa(A,A^+)$.
Then
\begin{enumerate}
    \item the natural surjection $\theta \colon A_{inf}(B^+)[1/p] \twoheadrightarrow B$
    exhibits $\wh\dR_{B/k}^\an = B_{\dR}^+(B)$, 
    and the Hodge filtrations are identified with the $\ker(\theta)$-adic filtrations;
    \item the presheaf defined by sending $U$ to 
    $\Gr^i(\wh\dR_{B/k}^\an)$ is a hypersheaf;
    \item the assignment sending $U$ to $\dR_{B/A}^\an/\Fil^n$ is independent of the choice
    of $\Spa(A,A^+)$, hence so is the assignment sending $U$ to $\wh\dR_{B/A}^\an$,
    we denote it as $\wh\dR_{B/X}^\an$;
    \item assuming $X/k$ is a local complete intersection, then the presheaf assigning
    $U$ to $\Gr^i(\wh\dR_{B/X}^\an)$ is a hypersheaf.
\end{enumerate}
\end{proposition}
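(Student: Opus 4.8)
The plan is to handle the four parts in the order (3), (1), (2), (4), since part (3) is what makes $\wh\dR_{B/X}^\an$ — and hence part (4) — well-posed. For \emph{part (3)}: since $X$ is separated, any two affinoid opens of $X$ through which the image of $U$ factors contain a common affinoid open still containing that image, so the poset of admissible choices of $\Spa(A,A^+)$ is cofiltered; it therefore suffices to show $\dR_{B/A}^\an/\Fil^n\xrightarrow{\sim}\dR_{B/A'}^\an/\Fil^n$ along an inclusion $\Spa(A',(A')^+)\subseteq\Spa(A,A^+)$ of affinoid opens of $X$. For this I would plug the triple $A\to A'\to B$ into~\Cref{padic Poincare mod Filn}: the strict exact Katz--Oda sequence it yields has every term beyond the first built out of $\st_m(\rmL\wedge^m\LL_{A'/A}^\an)$ with $m\geq1$, and $\LL_{A'/A}^\an\cong\Omega_{A'/A}^{1,\an}[0]=0$ because an open immersion of affinoids is smooth of relative dimension $0$ (\Cref{GR statements}(3)). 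Hence $\dR_{B/A}^\an/\Fil^n\cong\dR_{B/A'}^\an/\Fil^n$, and passing to the limit over $n$ gives $\wh\dR_{B/A}^\an\cong\wh\dR_{B/A'}^\an$; this value is independent of $A$, so we may write $\wh\dR_{B/X}^\an$.

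For \emph{part (1)}: a perfectoid Huber pair is uniform, so by~\Cref{sim construction} $\wh\dR_{B/k}^\an$ is the Hodge completion of $\dR_{B^+/\cO_k}^\an[1/p]$, and I would compute $\dR_{B^+/\cO_k}^\an$ integrally. Writing $\cO_k=W[\pi]/(E(\pi))$ over $W=W(\kappa)$ and lifting the image of $\pi$ in $B^+$ to $A_{inf}(B^+)$, one presents $B^+$ as the quotient of $\widetilde B\coloneqq\cO_k\,\hat{\otimes}_W A_{inf}(B^+)$ by the regular sequence consisting of a generator $\xi$ of $\ker(\theta)$ together with the Eisenstein relation; moreover $\cO_k/p\to\widetilde B/p$ is relatively perfect, being a base change of the relatively perfect map $\kappa\to B^{+,\flat}$. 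So~\Cref{dR when relatively perfect}(4) identifies $\dR_{B^+/\cO_k}^\an$ with the $p$-completed PD envelope of $\widetilde B$ along that ideal and its Hodge filtration with the PD filtration. Inverting $p$ collapses all divided powers ($\mathbb{Q}$-algebras), turning the PD envelope into the trivial one and the PD filtration into the adic filtration; Hodge-completing then contracts this to the $\ker(\theta)$-adic completion of $A_{inf}(B^+)[1/p]$ — the ``$\pi$'' direction disappearing because $E'$ of the lift of $\pi$ becomes a unit in the completion — i.e. to $B_{\dR}^+(B)$ with its $\ker(\theta)$-adic filtration. This is the relative form of Beilinson's computation,~\Cref{Beilinson Colmez example}.

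\emph{Parts (2) and (4)} both reduce to computing graded pieces and recognizing them as hypersheaves. By~\Cref{graded pieces of ddR} the presheaf in question sends $U$ to $(\rmL\wedge^i\LL^\an)[-i]$, described functorially in $U$ by~\Cref{ddR aff computation} — with base $k$ for (2), and (using part (3)) with a single local affinoid $A$ for (4). For (2) the triangle of~\Cref{ddR aff computation} degenerates ($\LL_{k/k}^\an=0$) to $\Gr^1(\wh\dR_{B/k}^\an)\cong B(1)$, and since $B$ is a $\mathbb{Q}$-algebra and $B(1)$ is invertible, $\Gr^i(\wh\dR_{B/k}^\an)\cong\rmL\Gamma_B^i(B(1))\cong B(i)$: the presheaf is a Tate twist of $\wh\cO_X$. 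For (4) the l.c.i.\ hypothesis ensures, via the Appendix (cf.~\Cref{embedded codimension control} and~\Cref{cohomological bound aff}), that $\LL_{A/k}^\an$ is a perfect $A$-complex of Tor-amplitude $[-c,0]$; applying $\rmL\Symm^i=\rmL\Gamma^i$ to the functorial triangle $B(1)\to\Gr^1(\wh\dR_{B/X}^\an)\to B\otimes_A\LL_{A/k}^\an$ puts a finite filtration on $\Gr^i(\wh\dR_{B/X}^\an)$ whose graded pieces are $B(j)\otimes_B\bigl(B\otimes_A\rmL\Symm^{i-j}_A\LL_{A/k}^\an\bigr)$ — Tate twists of $\wh\cO_X$ tensored with a fixed perfect complex pulled back from the base of the hypercover. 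To conclude in both cases I would invoke Scholze's acyclicity computations for $\wh\cO_X$ on affinoid perfectoids~\cite{Sch13}: these make $\wh\cO_X$, hence any finite colimit or retract of its Tate twists tensored with a perfect complex, a hypersheaf on the basis $X_\pe^\omega$ (a sheaf with cohomology in a bounded range is automatically hypercomplete), and hypersheaves are stable under finite limits and extensions, which finishes (2) and (4).

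The subtle step I anticipate is the hyperdescent in part (4): one must make sure the perfect complexes $\rmL\Symm^{i-j}_A\LL_{A/k}^\an$ entering the graded pieces are genuinely constant (pulled back) along the chosen hypercover, so that descent for $\Gr^i(\wh\dR_{B/X}^\an)$ really reduces to descent for $\wh\cO_X$ — this is precisely why part (3) must be in hand first, so that one may work over a single local affinoid $A$, and why the l.c.i.\ hypothesis enters only through perfectness and finite Tor-amplitude of $\LL_{A/k}^\an$ (using here that $- \otimes_A P$ commutes with the relevant limits when $P$ is perfect). The nested completions in part (1) also require care, but they become routine once $B^+$ is exhibited as a regular quotient of $\widetilde B$.
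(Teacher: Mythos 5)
Your handling of (2), (3) and (4) is essentially the paper's own argument: for (3) the paper likewise reduces, using separatedness, to an inclusion of affinoid opens and kills all higher Katz--Oda terms because the analytic cotangent complex of the inclusion vanishes (it quotes the integral argument of \Cref{integral local calculation}(4)); for (2) and (4) it likewise reads off the graded pieces from \Cref{ddR aff computation}, puts the finite filtration on $\Gr^i(\wh\dR_{B/X}^\an)$ with graded pieces $(\rmL\wedge^j\LL_{A/k}^\an)\otimes_A B(i-j)$ (your $\rmL\Symm$-version is the same thing in characteristic $0$), and concludes from perfectness of $\LL_{A/k}^\an$ together with Scholze's acyclicity of $B(m)$ on affinoid perfectoids, with (3) guaranteeing that one may work over a single $A$ --- exactly your reasoning.

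Part (1) is where you diverge, and where there is a genuine (though repairable) misstep. The paper reduces to the unramified situation: since $k/W(\kappa)[1/p]$ is finite \'etale one has $\LL^\an_{\cO_k/W(\kappa)}[1/p]=0$, hence $\wh\dR^\an_{B/k}\cong\wh\dR^\an_{B/W(\kappa)[1/p]}$, and then \Cref{dR when relatively perfect}(4) applied to $W(\kappa)\to A_{inf}(B^+)\to B^+$ (as in \Cref{integral local calculation}(1)) gives $A_{\cry}(B^+)$, whose rationalization and Hodge completion is $B_{\dR}^+(B)$ with the $\ker(\theta)$-adic filtration. You instead work over the possibly ramified $\cO_k$ via $\widetilde B=\cO_k\hat{\otimes}_{W(\kappa)}A_{inf}(B^+)$; relative perfectness of $\cO_k/p\to\widetilde B/p$ is indeed a base change of $\kappa\to B^{+\flat}$, but your description of $\ker(\widetilde B\twoheadrightarrow B^+)$ as generated by a length-two regular sequence ``a generator $\xi$ of $\ker(\theta)$ together with the Eisenstein relation'' is false: choosing a lift $\tilde\pi\in A_{inf}(B^+)$ of $\pi$ for which $E(\tilde\pi)$ generates $\ker(\theta)$, the kernel is the principal ideal $(\pi\otimes 1-1\otimes\tilde\pi)$, generated by a nonzerodivisor, and $\xi=E(\tilde\pi)=-(\pi\otimes1-1\otimes\tilde\pi)\cdot Q$ already lies in it; no two-element generating sequence of this ideal can be (Koszul-)regular, since the conormal module is $\Gr^1\cong B(1)$, of rank one. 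This is precisely the l.c.i.\ input you need for \Cref{dR when relatively perfect}(4), so as written its justification fails; once corrected to the principal-generator statement, your subsequent computation (the $\pi$-direction collapsing because $E'(\tilde\pi)$ becomes a unit after inverting $p$ and completing) does recover $B_{\dR}^+(B)$ with its $\ker(\theta)$-adic filtration --- or you can sidestep the issue entirely by the paper's reduction to the base $W(\kappa)$.
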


\begin{proof}
(1) and (3) follows from the same proof of~\Cref{integral local calculation}~(1) and (4) respectively.

Now we prove (2). 
The $i$-th graded piece of $\wh\dR_{B/k}^\an$ is isomorphic to $B(i)$ 
by~\Cref{ddR aff computation} (with $(A,A^+)$ there being $(k,\cO_k)$).
These are hypersheaves as they are supported in cohomological degree $0$ and satisfy higher acyclicity
by~\cite[Lemma 4.10]{Sch13}.

Lastly we we turn to (4). 
The graded pieces of $\wh\dR_{B/X}^\an$, by (2), is the same as $\wh\dR_{B/A}^\an$
for any choice of $A$.
Notice that, by~\Cref{ddR aff computation},
the $\Gr^i(\dR_{B/A}^\an)$ has a finite step filtration with graded pieces given by
$(\rmL\wedge^j\LL_{A/k}^\an) \otimes_A B(i-j)$.
Since hypersheaf property satisfies two-out-of-three principle in a triangle,
it suffices to show that the assignment sending 
\[
\Spa(B,B^+)=U \mapsto \left(\rmL\wedge^j\LL_{A/k}^\an\right) \otimes_A B(i-j)
\]
is a hypersheaf. 
This follows from the fact that $\LL_{A/k}^\an$
is a perfect complex (as $X$ is assumed to be a local complete intersection over $k$) and, again,
that sending $U$ to $B(m)$ is a hypersheaf for any $m \in \mathbb{Z}$.
\end{proof}

In particular,~\Cref{hypersheaf properties} tells us that the presheaves given by 
\[
\Spa(B,B^+)= U \in X_{\pe}^\omega \mapsto 
\begin{cases}
\wh\dR_{B/k}^\an/\Fil^n \text{ or }\\
\wh\dR_{B/k}^\an \text{ or }\\
\wh\dR_{B/X}^\an/\Fil^n \text{ or }\\
\wh\dR_{B/X}^\an
\end{cases},
\]
are all hypersheaves on $X_{\pe}^\omega$ 
(assuming $X/k$ is a local complete intersection for the latter two), 
using the fact that the hypersheaf property is preserved under taking limit, 
so we may unfold them to get a hypersheaf on $X_\pe$.

The authors believe that the conclusion of~\Cref{hypersheaf properties} (4)
(or a variant) should still hold for general rigid spaces instead of
only the local complete intersection ones.
Hence we ask the following:
\begin{question}
Given any rigid space $X/k$, is it true that
the presheaf assigning $U$ to $\Gr^i(\wh\dR_{B/X}^\an)$
is always a hypersheaf?
\end{question}

The subtlety is that a pro-\'{e}tale map of affinoid perfectoid algebras
need not be flat. 

Now we are ready to define the hypersheaf version of the relative de Rham cohomology.

\begin{definition}
The \emph{Hodge-completed analytic derived de Rham complex} of $X_{\pe}$ over $k$,
denoted by $\wh\dR_{X_\pe/k}^\an$,
is defined to be the unfolding of the hypersheaf on $X_{\pe}^\omega$
whose value at $U = \Spa(B,B^+) \in X_{\pe}^\omega$ is $\wh\dR_{B/k}^\an$.

Similarly we define a filtration on $\wh\dR_{X_\pe/k}^\an$ by unfolding
the Hodge filtration on $\wh\dR_{B/k}^\an$.
Since values of unfolding are computed by derived limits,
we see immediately that $\wh\dR_{X_\pe/k}^\an$ is derived complete
with respect to the filtration.
\end{definition}

This construction is related to Scholze's period sheaf $\mathbb{B}_{\dR}^+$ 
(see~\cite[Definition 6.1.(ii)]{Sch13})
by the following:
\begin{proposition}
\label{BdR+ comparison}
On $X_{\pe}^\omega$ we have a filtered isomorphism 
$\wh\dR_{X_\pe/k}^\an \simeq \mathbb{B}_{\dR}^+$ of hypersheaves.
Consequently, the $0$-th cohomology sheaf of $\wh\dR_{X_\pe/k}^\an$
is identified with the sheaf $\mathbb{B}_{\dR}^+$
as filtered sheaves on $X_\pe$.
\end{proposition}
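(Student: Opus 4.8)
The plan is to prove the isomorphism first on the basis $X_\pe^\omega$, where both sides are genuine (degree-$0$) presheaves, and then promote it to all of $X_\pe$ using the unfolding equivalence of~\Cref{unfolding}. The essential computation on the basis is already contained in~\Cref{hypersheaf properties}~(1): for $U = \Spa(B,B^+) \in X_\pe^\omega$, the map $\theta \colon A_{inf}(B^+)[1/p] \twoheadrightarrow B$ identifies $\wh\dR_{B/k}^\an$, as a filtered ring, with $B_\dR^+(B) = \varprojlim_n A_{inf}(B^+)[1/p]/\ker(\theta)^n$ equipped with the $\ker(\theta)$-adic filtration. By~\cite[Definition 6.1.(ii)]{Sch13} this filtered ring is precisely the value of $\BB_\dR^+$ at $U$ with its filtration. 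Moreover $\BB_\dR^+$ admits a finite exhaustive filtration whose graded pieces are the Tate twists $\wh\cO_X(i)$, and $\wh\cO_X$ is acyclic on affinoid perfectoid objects by~\cite[Lemma 4.10]{Sch13}; hence the restriction of $\BB_\dR^+$ (viewed as the hypersheaf $U \mapsto \rmR\Gamma(U,\BB_\dR^+)$ via~\Cref{Rmk, equiv}) to $X_\pe^\omega$ is the presheaf $U \mapsto B_\dR^+(B)$ concentrated in cohomological degree $0$, while the restriction of $\wh\dR_{X_\pe/k}^\an$ to $X_\pe^\omega$ is $U \mapsto \wh\dR_{B/k}^\an$ by the defining property of unfolding.

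What remains is to check that the identification $\wh\dR_{B/k}^\an \cong B_\dR^+(B)$ is natural in $U \in X_\pe^\omega$, so that it assembles into an isomorphism of filtered presheaves on $X_\pe^\omega$; since both presheaves are hypersheaves there (the left one by~\Cref{hypersheaf properties}~(2), the right one by the acyclicity just recalled), this is the same as a filtered isomorphism of hypersheaves. For this I would revisit the construction of the comparison map: the filtered map $\mathscr{C}omp$ of~\Cref{dR when relatively perfect}~(4), built from~\cite[Proposition 3.25]{Bha12} and~\cite[Th\'{e}or\`{e}me V.2.3.2]{Ber74}, is functorial in the surjection of rings, and the remaining steps producing $\wh\dR_{B/k}^\an$ from the integral de Rham complexes --- the filtered colimit over rings of definition (which~\Cref{hypersheaf properties} shows has isomorphic transition maps in this situation), inverting $p$, and Hodge completion --- are functorial as well. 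This upgrades the objectwise statement to an isomorphism of filtered hypersheaves on $X_\pe^\omega$, which is the first assertion.

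Finally, to pass to $X_\pe$: both $\wh\dR_{X_\pe/k}^\an$ and the hypersheaf $U \mapsto \rmR\Gamma(U,\BB_\dR^+)$ are hypersheaves on $X_\pe$ agreeing on the basis $X_\pe^\omega$, so the unfolding equivalence $\Sh^{\mathrm{hyp}}(X_\pe^\omega,-) \simeq \Sh^{\mathrm{hyp}}(X_\pe,-)$ makes them isomorphic as filtered hypersheaves on $X_\pe$. Since $\wh\dR_{B/k}^\an$ and all its Hodge truncations $\wh\dR_{B/k}^\an/\Fil^n$ are concentrated in cohomological degree $0$ on the basis, so is $\wh\dR_{X_\pe/k}^\an$ --- its cohomology sheaves are the sheafifications of presheaf cohomology, which vanishes outside degree $0$ on the basis --- and therefore $\wh\dR_{X_\pe/k}^\an$ is identified with its $0$-th cohomology sheaf, while $\rmH^0$ of $\rmR\Gamma(-,\BB_\dR^+)$ is the sheaf $\BB_\dR^+$. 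Because the filtration on each side is by actual subsheaves (the graded pieces being the $\wh\cO_X(i)$, sheaves in degree $0$, by~\Cref{hypersheaf properties}~(2) together with~\Cref{ddR aff computation}), the resulting identification is one of filtered sheaves on $X_\pe$. I expect the only real work to lie in the functoriality bookkeeping of the second paragraph; the rest is a formal assembly of~\Cref{hypersheaf properties}, the unfolding formalism, and the cohomological vanishing statements of~\cite{Sch13}.
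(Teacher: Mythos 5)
Your proposal is correct and follows essentially the same route as the paper: the filtered identification $\wh\dR^\an_{B/k}\cong B_{\dR}^+(B)$ on the basis $X_\pe^\omega$ from \Cref{hypersheaf properties}~(1), unfolding via the equivalence of \Cref{unfolding}, and the observation that the $0$-th cohomology sheaf of the unfolding of a hypersheaf concentrated in degree $0$ on a basis is the associated ordinary sheaf, which here is Scholze's $\mathbb{B}_{\dR}^+$ (your extra care about functoriality of the comparison map is a point the paper leaves implicit, and is handled as you suggest). One small imprecision: the $\ker(\theta)$-adic filtration on $\mathbb{B}_{\dR}^+$ is complete rather than finite, so the degree-$0$ claim on affinoid perfectoids should be run through the finite filtrations on each $\mathbb{B}_{\dR}^+/\Fil^n$ (graded pieces $\wh\cO_X(i)$, acyclic by \cite[Lemma 4.10]{Sch13}) together with the vanishing of the relevant $\mathrm{R}^1\varprojlim$ — this is exactly Scholze's acyclicity statement and does not affect your argument.
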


Before the proof,
we want to mention that under the equivalence 
$\scrD(X,k)\cong \Sh^{\mathrm{hyp}}(X,k)$ 
and its filtered version (c.f.~Remark \ref{Rmk, equiv}), 
this Proposition implies that the derived de Rham complex 
$\wh \dR_{X_\pe/k}^\an$
is represented by the ordinary sheaf $\mathbb{B}_{\dR}^+$.
Here the induced filtration on $\scrH^0(\wh\dR_{X_\pe/k}^\an)$ is given by
$\scrH^0(\Fil^*\wh\dR_{X_\pe/k}^\an)$.

\begin{proof}
The first sentence follows from~\Cref{hypersheaf properties} (1).

Given a hypersheaf $F$ supported in cohomological degree $0$ on a basis of a site $S$,
it also defines an ordinary sheaf on $S$ (by taking the $0$-th cohomology).
The unfolding of $F$ is a hypersheaf in $\scrD^{\geq 0}$, and its
$0$-th cohomological sheaf is the ordinary sheaf one obtains.

In our situation, we have the basis $X_{\pe}^\omega$ of the site $X_{\pe}$,
and Scholze's $\mathbb{B}_{\dR}^+$ (and its filtrations) are defined
as the ordinary sheaf obtained from $\mathbb{B}_{\dR}^+(\hat{\cO}^+_X)$ 
(and its $\ker(\theta)$-adic filtrations).
Now previous paragraph and the first statement give us the second statement.
\end{proof}

\begin{definition}
Let $X$ be a local complete intersection rigid space over $k$. 
Then the \emph{Hodge-completed analytic derived de Rham complex} of $X_{\pe}$ over $X$,
denoted by $\wh\dR_{X_\pe/X}^\an$,
is defined to be the unfolding of the hypersheaf on $X_{\pe}^\omega$
whose value at $U = \Spa(B,B^+) \in X_{\pe}^\omega$ is $\wh\dR_{B/X}^\an$.

Similarly we define a filtration on $\wh\dR_{X_\pe/X}^\an$ by unfolding
the Hodge filtration on $\wh\dR_{B/X}^\an$.
So $\wh\dR_{X_\pe/X}^\an$ is also derived complete
with respect to the filtration.
\end{definition}

If $X$ is a local complete intersection rigid space over $k$ with embedded codimension $c$.
Then by~\Cref{cohomological bound aff} (3), we see that
$\wh\dR_{X_\pe/X}^\an$ lives in $\Sh^{{\mathrm{hyp}}}(X_{\pe},\scrD^{\geq -c}(k))$.

The Poincar\'{e} Lemma obtained in the previous subsection now immediately
yields the following:
\begin{theorem}
\label{rational Poincare sequence}
Let $X$ be a local complete intersection rigid space over $k$. 
Then there is a functorial strict exact filtration on $\wh\dR_{X_{\pe}/k}^\an$
witnessing the following:
\[
\wh\dR_{X_{\pe}/k}^\an \rra \wh\dR_{X_{\pe}/X}^\an \xrightarrow{\nabla} 
\dR_{X_{\pe}/X}^\an \otimes_{\nu^{-1}\cO_X} \st_1(\nu^{-1}(\LL_{X/k}^\an))
\xrightarrow{\nabla} \cdots
\]

If $X$ is further assumed to be smooth over $k$ of equidimension $d$, 
then the following $\wh\dR_{X_{\pe}/k}^\an$-linear sequence
\begin{align*}
0 \to \wh\dR_{X_{\pe}/k}^\an \rra \wh\dR_{X_{\pe}/X}^\an \xrightarrow{\nabla} 
\dR_{X_{\pe}/X}^\an \otimes_{\nu^{-1}\cO_X} \st_1(\nu^{-1}(\LL_{X/k}^\an)) \xrightarrow{\nabla} \cdots \\
\cdots \xrightarrow{\nabla} \wh\dR_{X_{\pe}/X}^\an \otimes_{\nu^{-1}\cO_X} 
\st_d(\nu^{-1}(\rmL\wedge^d\LL_{X/k}^\an)) \to 0
\end{align*}
is strict exact.
\end{theorem}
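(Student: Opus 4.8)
The plan is to deduce the statement from the affinoid Poincar\'{e} Lemma~\Cref{Poincare Lemma} by building the filtration on the basis $X_\pe^\omega$ and then unfolding. First I would fix $U = \Spa(B,B^+) \in X_\pe^\omega$ together with an affinoid open $\Spa(A,A^+) \subset X$ containing the image of $U$, and apply~\Cref{Poincare Lemma} to the triangle of complete Huber rings $(k,\cO_k) \to (A,A^+) \to (B,B^+)$. This produces a functorial strict exact Katz--Oda filtration $\Fil^\bullet_{\mathrm{KO}}$ on $\wh\dR_{B/k}^\an$ witnessing $\wh\dR_{B/k}^\an \to \wh\dR_{B/A}^\an \xrightarrow{\nabla} \wh\dR_{B/A}^\an \wh\otimes_A \st_1(\LL_{A/k}^\an) \xrightarrow{\nabla} \cdots$ with graded pieces $\Gr^i_{\mathrm{KO}} \cong \wh\dR_{B/A}^\an \wh\otimes_A \st_i(\rmL\wedge^i\LL_{A/k}^\an)[-i]$; by~\Cref{hypersheaf properties}(3) we may write $\wh\dR_{B/A}^\an = \wh\dR_{B/X}^\an$.

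Next I would check this datum is independent of the auxiliary $\Spa(A,A^+)$ and glues over $X_\pe^\omega$. Passing to a smaller affinoid open with coordinate ring $A'$ still containing the image of $U$, the map of triples $(k\to A\to B)\to(k\to A'\to B)$ induces, by functoriality of the Katz--Oda construction, a map of filtered objects $(\wh\dR_{B/k}^\an,\Fil^\bullet_{\mathrm{KO}})$, which is a filtered isomorphism since on graded pieces it reduces --- using $\LL^\an_{A'/A}=0$ and~\Cref{hypersheaf properties}(3) --- to an identity, exactly as in the proof of~\Cref{integral local calculation}(4). Because $X/k$ is a local complete intersection, $\LL_{X/k}^\an$, hence each $\rmL\wedge^i\LL_{X/k}^\an$, is a perfect complex, so tensoring with it commutes with the limits computing unfolding; combined with~\Cref{hypersheaf properties}(2) and~(4) this shows that $U\mapsto(\wh\dR_{B/k}^\an,\Fil^\bullet_{\mathrm{KO}})$, its graded terms, and all its mod-Hodge truncations are (filtered) hypersheaves on $X_\pe^\omega$. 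Unfolding as in~\Cref{unfolding} then transports $\Fil^\bullet_{\mathrm{KO}}$ to a filtration on $\wh\dR_{X_\pe/k}^\an$ with $\Gr^i_{\mathrm{KO}} \cong \wh\dR_{X_\pe/X}^\an \otimes_{\nu^{-1}\cO_X} \st_i(\nu^{-1}(\rmL\wedge^i\LL_{X/k}^\an))[-i]$, i.e.\ the sequence in the statement; the $\nabla$'s retain $\wh\dR_{X_\pe/k}^\an$-linearity and the Newton--Leibniz rule from~\Cref{Poincare Lemma}. Strict exactness descends because unfolding is computed by derived limits, which preserve the completeness conditions of~\Cref{Filtration convention}; concretely, the vanishing $\Fil^i_{\mathrm{KO}}(\wh\dR_{B/k}^\an/\Fil^j_{\mathrm{H}})=0$ for $i>j$ from~\Cref{padic Poincare mod Filn} persists after unfolding.

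For the smooth case, each auxiliary $A/k$ is smooth of equidimension $d$, so $\LL_{A/k}^\an=\Omega^{1,\an}_{A/k}[0]$ is finite locally free of rank $d$ and $\rmL\wedge^i\LL_{A/k}^\an=0$ for $i>d$; hence $\Gr^i_{\mathrm{KO}}=0$ for $i>d$, and completeness of $\Fil^\bullet_{\mathrm{KO}}$ then forces $\Fil^{d+1}_{\mathrm{KO}}=0$. Moreover $\wh\dR_{X_\pe/k}^\an$, $\wh\dR_{X_\pe/X}^\an$ and the tensor terms all lie in cohomological degree $0$ by~\Cref{hypersheaf properties}(1) and~\Cref{cohomological bound aff}(2). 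Thus the Katz--Oda filtration is a finite filtration with degree-$0$ graded pieces, and the sequence it witnesses is an honest bounded sequence of filtered sheaves whose strict exactness (in the sense of~\Cref{Filtration convention}) is precisely the strict exactness of $0 \to \wh\dR_{X_\pe/k}^\an \to \wh\dR_{X_\pe/X}^\an \to \cdots \to \wh\dR_{X_\pe/X}^\an \otimes_{\nu^{-1}\cO_X} \st_d(\nu^{-1}(\rmL\wedge^d\LL_{X/k}^\an)) \to 0$.

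I expect the real obstacle to be the gluing in the second paragraph: verifying that the Katz--Oda filtration, a priori depending on the affinoid $\Spa(A,A^+)$ over which $U$ lives, is canonically independent of that choice and varies functorially in $U$, so that it genuinely assembles into a filtered hypersheaf on $X_\pe^\omega$. This is precisely where the local complete intersection hypothesis enters --- to guarantee that $\rmL\wedge^i\LL_{X/k}^\an$ is perfect, so that the graded terms are hypersheaves and tensoring with them is compatible with unfolding --- and where the base-change isomorphisms of~\Cref{padic Poincare mod Filn} must be patched together coherently.
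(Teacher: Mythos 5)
Your proposal is correct and takes essentially the same route as the paper: unfold the affinoid Poincar\'{e} Lemma of \Cref{Poincare Lemma} over the basis $X_\pe^\omega$, using the local complete intersection hypothesis to make $\rmL\wedge^i\LL^\an_{X/k}$ perfect (so the completed tensor is a plain tensor and the graded terms are hypersheaves compatible with unfolding), and in the smooth case use the degree-$0$ support results to truncate at degree $d$ and conclude strict exactness. The gluing/independence-of-$\Spa(A,A^+)$ issue you single out as the main obstacle is exactly what \Cref{hypersheaf properties}(3) already provides, so it needs no argument beyond what you outline.
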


Note that as $X/k$ is assumed to be local complete intersection,
these wedge powers of the analytic cotangent complex are (locally) perfect complexes,
hence the completed tensor is the same as just tensor.

\begin{proof}
Since both of unfolding and taking $\Gr^i$ commute with taking limit, the above follows 
from unfolding~\Cref{Poincare Lemma},
and the fact that the completed tensor in~\Cref{Poincare Lemma}
is the same as tensor for local complete intersections $X/k$.

When $X$ is smooth over $k$, everything in sight
(on the basis of affinoid perfectoids in $X_{\pe}^\omega$)
are supported cohomologically in
degree $0$ with filtrations given by submodules because 
of~\Cref{ddR aff computation},~\Cref{cohomological bound aff}, 
and~\Cref{hypersheaf properties},
the strict exact Katz–Oda filtration gives what we want.
\end{proof}

\subsection{Comparing with Scholze's de Rham period sheaf}
In this subsection we show that when $X$ is smooth, 
the de Rham sheaf $\wh\dR_{X_{\pe}/X}^\an$ defined above is related to Scholze's 
de Rham period sheaf $\cO\mathbb{B}_{\dR}^+$. 
We refer readers to~\cite[part (3)]{SchCor} for the its definition.
Following notation of loc.~cit.,
let $\Spa(R_i, R_i^+)$ be an affinoid perfectoid in $X_{\pe}$
with $\Spa(R_0, R_)^+)$ an affinoid open in $X$.
Then for any $i$, we have maps
\[
R_i^+ \ra \dR_{R^+/R_i^+}^\an \text{ and } 
\mathbb{A}_{inf}(R^+) = \dR_{R^+/W(\kappa)}^\an \ra \dR_{R^+/R_i^+}^\an,
\]
which is compatible with maps to $R^+$,
here $\kappa$ denotes the residue field of $k$.
The equality above is deduced from~\Cref{dR when relatively perfect} (1).
Therefore we get an induced map 
\[
R_i^+ \hat{\otimes}_{W(\kappa)} \mathbb{A}_{inf}(R^+) \ra \dR_{R^+/R_i^+}^\an \ra \wh\dR_{R/R_i}^\an.
\]
Taking the composition map above,
inverting $p$ and completing along the kernel of the surjection onto $R$
(note that $\wh\dR_{R/R_i}^\an$ lives in cohomological degree $0$ by~\Cref{cohomological bound aff} (2)
and is already complete with respect to this filtration), 
we get a natural arrow:
\[
\left( (R_i^+ \hat{\otimes}_{W(\kappa)} \mathbb{A}_{inf}(R^+))[1/p] \right)^{\wedge}
\rra  \wh\dR_{R/R_i}^\an \cong \wh\dR_{R/R_0}^\an,
\]
here we apply~\Cref{Poincare Lemma} to $(R_0, R_0^+) \to (R_i, R_i^+) \to (R, R^+)$
to see the filtered isomorphism above.
This arrow is compatible with index $i$, hence after taking colimit,
we get the following map of sheaves on $X_{\pe}^\omega$ 
(see the discussion before~\Cref{hypersheaf properties} for the meaning of $X_{\pe}^\omega$):
\[
f \colon \cO\mathbb{B}_{\dR}^+\mid_{X_{\pe}^\omega} \rra \wh\dR_{X_{\pe}/X}^\an,
\]
which is compatible with maps to $\hat{\cO}_{X_\pe}$
and maps from $\wh\dR_{X_\pe/k}^\an \simeq \mathbb{B}_{\dR}^+$.

\begin{theorem}
\label{OBdR+ comparison}
The map $f$ above induces a filtered isomorphism of sheaves on $X_{\pe}^\omega$.
Hence we get that $\cO\mathbb{B}_{\dR}^+$ is the $0$-th
cohomology sheaf of the hypersheaf $\wh\dR_{X_{\pe}/X}^\an$ on $X_{\pe}$.
\end{theorem}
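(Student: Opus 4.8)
The plan is to verify that $f$ is an isomorphism object by object on the basis $X_{\pe}^\omega$, where for $U=\Spa(B,B^+)$ with $\Spa(A,A^+)\subset X$ an affinoid open containing the image of $U$ (which we are free to shrink), $f$ becomes the filtered algebra map $\cO\mathbb{B}_{\dR}^+(U)\to \wh\dR_{B/X}^\an\cong\wh\dR_{B/A}^\an$. Since the source is complete with respect to its filtration (by construction of $\cO\mathbb{B}_{\dR}^+$ in~\cite{Sch13}) and the target is Hodge-complete by construction, and $f$ is a map of filtered algebras, it suffices to show that $\Gr^\ast(f)$ is an isomorphism of graded algebras. Note $\Gr^0(f)$ is an isomorphism: both graded pieces are $\hat\cO_X$, realized via $\theta$, and $f$ is compatible with the maps to $\hat\cO_{X_\pe}$.

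Next I would reduce to the first graded piece. On the target, \Cref{ddR aff computation} gives $\Gr^\ast\wh\dR_{B/A}^\an\cong\rmL\Gamma^\ast_B(\Gr^1\wh\dR_{B/A}^\an)$; since $X/k$ is smooth, the same theorem shows $\Gr^1\wh\dR_{B/A}^\an$ is a finite free $B$-module in degree $0$, sitting in $0\to B(1)\to\Gr^1\wh\dR_{B/A}^\an\to B\otimes_A\Omega^{1,\an}_{A/k}\to 0$, and as $B\supset\mathbb{Q}$ we have $\rmL\Gamma^\ast_B=\Symm^\ast_B$, so $\Gr^\ast\wh\dR_{B/A}^\an$ is the polynomial $B$-algebra on its degree-one part. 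On the source, choosing an étale framing $k\langle T_1^{\pm 1},\ldots,T_d^{\pm 1}\rangle\to A$, Scholze's local description (see~\cite[Proposition 6.10]{Sch13}) exhibits $\cO\mathbb{B}_{\dR}^+(U)$ as a power series ring over $\mathbb{B}_{\dR}^+(B)$ in variables $X_i$, whence $\Gr^\ast\cO\mathbb{B}_{\dR}^+$ is the polynomial $\hat\cO_X$-algebra on $\Gr^1\cO\mathbb{B}_{\dR}^+$, which contains $\Gr^1\mathbb{B}_{\dR}^+=\hat\cO_X(1)$ together with the classes of the $X_i$. Since $f$ is an algebra map inducing the identity on $\Gr^0$, and both $\Gr^\ast$ are symmetric algebras on their degree-one parts, it is enough to prove $\Gr^1(f)$ is an isomorphism.

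For $\Gr^1(f)$ I would treat the two pieces of the above extension separately. The construction of $f$ is compatible with $\mathbb{B}_{\dR}^+\hookrightarrow\cO\mathbb{B}_{\dR}^+$ and with $\wh\dR_{X_\pe/k}^\an\simeq\mathbb{B}_{\dR}^+\to\wh\dR_{X_\pe/X}^\an$ (using \Cref{BdR+ comparison}), and on $\Gr^1$ the latter is, by the triangle in \Cref{ddR aff computation}, the inclusion $B(1)\hookrightarrow\Gr^1\wh\dR_{B/A}^\an$; hence $\Gr^1(f)$ restricts to the identity $\Gr^1\mathbb{B}_{\dR}^+=B(1)\to B(1)$. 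On the quotient $B\otimes_A\Omega^{1,\an}_{A/k}$ I would use base change: as $k\langle T_i^{\pm 1}\rangle\to A$ is étale, $\LL^\an_{A/k\langle T_i^{\pm 1}\rangle}=0$, so $\wh\dR^\an_{A/k\langle T_i^{\pm1}\rangle}=A$, and the product formula in \Cref{Poincare Lemma} gives $\wh\dR_{B/A}^\an\cong\wh\dR^\an_{B/k\langle T_i^{\pm 1}\rangle}$; this reduces the computation of $\Gr^1(f)$ modulo $B(1)$ to the perfectoid torus over the rigid torus analyzed in \Cref{rational torus example}. There, unwinding the definition of $f$, Scholze's variable $X_i$ (the image of $T_i-[T_i^\flat]$, in the notation recalled before~\cite[Proposition 6.10]{Sch13}) is sent to the class of $T_i-S_i$, which by \Cref{cotangent complex of perfectoid circle} and \Cref{dR of perfectoid circle} equals $1\otimes dT_i\in B\otimes_A\Omega^{1,\an}_{A/k}$; since the $dT_i$ form a $B$-basis by smoothness and the $[T_i^\flat]$-twists lie in the $\mathbb{B}_{\dR}^+$-part already handled, $\Gr^1(f)$ is an isomorphism. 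The last clause of the theorem then follows from \Cref{hypersheaf properties} and \Cref{unfolding} as in \Cref{BdR+ comparison}: $\cO\mathbb{B}_{\dR}^+$ is the ordinary sheaf attached to the degree-$0$ hypersheaf on the basis, i.e.\ $\scrH^0$ of its unfolding.

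The main obstacle is precisely this last identification on the perfectoid torus: one must check that $f$ is genuinely filtered — reconciling Scholze's filtration on $\cO\mathbb{B}_{\dR}^+$ (which in coordinates assigns weight one both to the Tate parameter and to each $X_i$) with our PD-envelope / Hodge filtration description, via \Cref{dR when relatively perfect}(4) (PD filtration $=$ Hodge filtration, which after inverting $p$ is the $\ker(\theta)$-adic filtration) — and that it carries $X_i$ to $1\otimes dT_i$ on $\Gr^1$. Once this base case is pinned down, everything else is functoriality, the symmetric-algebra bookkeeping, and the formal completeness reductions above.
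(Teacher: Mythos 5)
Your proposal is correct and follows essentially the same route as the paper's proof: reduce to graded pieces, identify both graded algebras as symmetric algebras on their first graded pieces, use compatibility with $\mathbb{B}_{\dR}^+\simeq\wh\dR^\an_{X_\pe/k}$ to handle the $\hat{\cO}_{X_\pe}(1)$-part and reduce to the induced $\hat{\cO}_{X_\pe}$-linear map on $\hat{\cO}_{X_\pe}\otimes_{\cO_X}\Omega^\an_X$, then verify on the torus that $u_i$ and the class of $T_i-S_i$ both map to $1\otimes dT_i$ via~\Cref{rational torus example},~\Cref{cotangent complex of perfectoid circle} and~\Cref{dR of perfectoid circle}. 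The only cosmetic difference is that you reduce to the torus via the étale framing and the base-change formula $\wh\dR^\an_{B/A}\cong\wh\dR^\an_{B/k\langle T_i^{\pm1}\rangle}$, whereas the paper simply invokes that the statement is étale local and assumes $X=\mathbb{T}^n$.
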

Similar to~\Cref{BdR+ comparison}, 
under the equivalence $\scrD(X,k)\cong \Sh^{\mathrm{hyp}}(X,k)$ 
and its filtered version (c.f.~Remark \ref{Rmk, equiv}), 
this Theorem implies that
the derived de Rham complex $\wh\dR_{X_{\pe}/X}^\an$ 
is represented by the ordinary sheaf $\cO\mathbb{B}_{\dR}^+$.

\begin{proof}
The second sentence follows from the first sentence, due to the same argument
in the proof of the second statement of~\Cref{BdR+ comparison}.
So it suffices to show the first statement.

On both sheaves, there are natural filtrations:
on $\cO\mathbb{B}_{\dR}^+$ we have the $\ker(\theta)$-adic filtration
where $\theta \colon \cO\mathbb{B}_{\dR}^+ \to \hat{\cO}_{X_\pe}$
and on $\wh\dR_{X_{\pe}/X}^\an$ we have the Hodge filtration with
the first Hodge filtration being kernel of 
$\wh\dR_{X_{\pe}/X}^\an \twoheadrightarrow \hat{\cO}_{X_\pe}$.
Since $f$ is compatible with maps to $\hat{\cO}_{X_\pe}$ and the Hodge filtration is multiplicative,
it suffices to show that $f$ induces an isomorphism on their graded pieces.
Now locally on $X_{\pe}^\omega$, we have that
$\Gr^*(\cO\mathbb{B}_{\dR}^+) \cong \Symm^*_{\hat{\cO}_{X_\pe}}(\Gr^1\cO\mathbb{B}_{\dR}^+)$
by~\cite[Proposition 6.10]{Sch13} and similarly
$\Gr^*(\wh\dR_{X_{\pe}/X}^\an) \cong \Symm^*_{\hat{\cO}_{X_\pe}}(\Gr^1\wh\dR_{X_{\pe}/X}^\an)$
by~\Cref{ddR aff computation} (note that in characteristic $0$ divided powers are the same as symmetric powers).
Therefore we have reduced ourselves to showing that $f$ induces an isomorphism on the first graded pieces.
Their first graded pieces admits a common submodule given by the first graded pieces of
$\wh\dR_{X_\pe/k}^\an \simeq \mathbb{B}_{\dR}^+$ which is $\hat{\cO}_{X_\pe}(1)$.

Now we get the following diagram:
\[
\xymatrix{
\hat{\cO}_{X_\pe}(1) \ar^{\cong}[d] \ar[r] & \Gr^1\cO\mathbb{B}_{\dR}^+ \ar^{\Gr^1 f}[d] \ar[r] & 
\hat{\cO}_{X_\pe} \otimes_{\cO_X} \Omega^\an_X \ar^{g}[d] \\
\hat{\cO}_{X_\pe}(1) \ar[r] & \Gr^1\wh\dR_{X_{\pe}/X}^\an \ar[r] &
\hat{\cO}_{X_\pe} \otimes_{\cO_X} \Omega^\an_X 
}
\]
with both rows being short exact 
(by~\cite[Corollary 6.14]{Sch13} and~\Cref{ddR aff computation} respectively)
and the left vertical arrow being an isomorphism as $f$ is compatible with the maps from
$\wh\dR_{X_\pe/k}^\an \simeq \mathbb{B}_{\dR}^+$,
which is why we get the induced arrow $g$.
Moreover $f$ is linear over $\wh\dR_{X_\pe/k}^\an \simeq \mathbb{B}_{\dR}^+$,
which implies that $g$ is linear over $\hat{\cO}_{X_\pe}$.
Therefore it suffices to show that $g$ induces an isomorphism.

As the statement is \'etale local, we may assume that
$X = \mathbb{T}^n = 
\Spa(k\langle T_1^{\pm1}, \ldots, T_n^{\pm1} \rangle, \cO_k\langle T_1^{\pm1}, \ldots, T_n^{\pm1} \rangle)$.
Denote $\mathbb{T}^n_{\infty}$
the pro-finite-\'{e}tale tower above $\mathbb{T}^n$
given by adjoining $p$-power roots of the coordinates $T_i$.
We have the following diagram
\[
\xymatrix{
\mathbb{Z}_p \langle T_i^{\pm 1}, S_i^{1/p^\infty} \rangle = 
\mathbb{Z}_p \langle T_i^{\pm 1} \rangle \hat{\otimes}_{\mathbb{Z}_p} 
\mathbb{Z}_p \langle S_i^{1/p^\infty} \rangle \ar^-{\alpha}[r] \ar^-{\beta}[d] &
\cO\BB_{\dR}^{+}\mid_{\mathbb{T}^n_{\infty}} \ar^-{f}[d] \\
\mathbb{Q}_p \langle S_i^{\pm 1/p^\infty} \rangle [\![X_i]\!] \ar^-{\gamma}[r] &
\wh\dR^\an_{X_\pe/X}\mid_{\mathbb{T}^n_{\infty}}.
}
\]
Here the arrow $\beta$ is given by sending $T_i$ to $X_i + S_i$,
and $S_i$ is sent to $1 \otimes [T_i^\flat]$ under $\alpha$.
The element $\alpha(T_i - S_i)$ is $u_i \in \Fil^1\cO\BB_{\dR}^{+}$
whose image in $\hat{\cO}_{X_\pe} \otimes_{\cO_X} \Omega^\an_X $
is $1 \otimes dT_i$, see the discussion before~\cite[Proposition 6.10]{Sch13}.
On the other hand, the element $\beta(T_i - S_i)$ is $X_i$,
and the image of $\gamma(X_i)$ in $\hat{\cO}_{X_\pe} \otimes_{\cO_X} \Omega^\an_X $
is also $1 \otimes dT_i$ by~\Cref{rational torus example},~\Cref{cotangent complex of perfectoid circle}
and~\Cref{dR of perfectoid circle}.
Therefore we get that $g(1 \otimes dT_i) = 1 \otimes dT_i$, since $g$ is linear over $\hat{\cO}_{X_\pe}$
and $\Omega^\an_X$ is generated by $dT_i$'s, we see that $g$ is an isomorphism, hence finishes the proof.
\end{proof}

\begin{remark}
\label{rational Poincare matches}
In the process of the proof above, we also see that under the identification
in~\Cref{BdR+ comparison} and~\Cref{OBdR+ comparison}, the Poincar\'{e} sequence
obtained in~\Cref{rational Poincare sequence} and the one in Scholze's paper~\cite[Corollary 6.13]{Sch13}
matches, c.f.~proof of the second statement of~\Cref{crystalline period comparison}.

Also the Faltings' extension (see~\cite[Corollary 6.14]{Sch13} and~\Cref{ddR aff computation}), 
being the first graded pieces of $\cO\mathbb{B}_{\dR}^+ \cong \scrH^0(\wh\dR_{X_{\pe}/X}^\an)$,
is matched up.
In some sense, our proof above reduces to identifying the Faltings' extension,
and this is a well-known fact to experts.
In fact, this project was initiated after Bhargav Bhatt explained to us how to get Faltings' extension
from the analytic cotangent complex $\LL^\an_{X_\pe/X}$.
\end{remark}

\subsection{An example}
\label{artinian example}
In this complementary subsection, we would like to compute the Hodge-completed
analytic derived de Rham complex of a perfectoid algebra over a $0$-dimensional
$k$-affinoid algebra.
Surprisingly, the underlying algebra (forgetting its filtration) one get
always lives in cohomological degree $0$, which leads us to the~\Cref{coh bound question}.

Without loss of generality, let $(K, K^+)$ be a perfectoid field
over $k$, containing all $p$-power
roots of unity, and let $A$ be an Artinian local finite $k$-algebra
with residue field being $k$ as well.
Let $(B,B^+)$ be a perfectoid affinoid algebra containing $(K,K^+)$ and
let $A \to B$ be a morphism of $k$-algebras.
Since perfectoid affinoid algebras are reduced, we get a sequence of maps
$k \to A \to k \to B$.

By the above sequence, we get natural filtered $k$-linear maps:
\[
\wh\dR^\an_{B/k} \rra \wh\dR^\an_{B/A} \rra \wh\dR^\an_{B/k} \text{ and } \wh\dR^\an_{k/A} \rra \wh\dR^\an_{B/A}.
\]
This induces a filtered map:
\[
\wh\dR^\an_{B/k} \otimes_k \wh\dR^\an_{k/A} \rra \wh\dR^\an_{B/A}
\]
where the filtration on the source comes from the symmetric monoidal structure on $\DF(k)$.
Since this map is compatible with the filtration and the target is complete with respect
to its filtration, we get an induced map:
\[
\wh\dR^\an_{B/k} \hat{\otimes}_k \wh\dR^\an_{k/A} \rra \wh\dR^\an_{B/A}.
\]

\begin{proposition}
The map $\wh\dR^\an_{B/k} \hat{\otimes}_k \wh\dR^\an_{k/A} \rra \wh\dR^\an_{B/A}$
above is a filtered isomorphism.
\end{proposition}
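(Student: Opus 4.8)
The plan is to verify that the filtered map $f\colon \wh\dR^\an_{B/k}\hat{\otimes}_k\wh\dR^\an_{k/A}\rra\wh\dR^\an_{B/A}$ of the Proposition induces an isomorphism on the graded pieces of the Hodge filtrations. Since both sides are derived complete with respect to their filtrations and $f$ is filtered, this suffices. As $\Gr^*$ is symmetric monoidal and completion along a filtration does not change graded pieces, we have $\Gr^*\big(\wh\dR^\an_{B/k}\hat{\otimes}_k\wh\dR^\an_{k/A}\big)\cong\Gr^*(\wh\dR^\an_{B/k})\otimes_k\Gr^*(\wh\dR^\an_{k/A})$, so what remains is to describe the three graded algebras and to see that $\Gr^*(f)$ is the obvious multiplication between them.

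First I would record the shapes of the graded algebras. By~\Cref{ddR aff computation} applied with base $k$, and using $\LL^\an_{k/k}=0$, we get $\Gr^*(\wh\dR^\an_{B/k})\cong\rmL\Gamma^*_B(\LL^\an_{B/k}[-1])\cong\rmL\Gamma^*_B(B(1))$, a free divided power $B$-algebra on the line $B(1)$; by the same theorem $\Gr^*(\wh\dR^\an_{B/A})\cong\rmL\Gamma^*_B(\LL^\an_{B/A}[-1])$. The ring $k$ is not perfectoid, so~\Cref{ddR aff computation} does not apply to $\wh\dR^\an_{k/A}$, but~\Cref{graded pieces of ddR} still gives $\Gr^i(\wh\dR^\an_{k/A})\cong(\rmL\wedge^i_k\LL^\an_{k/A})[-i]$, hence, $k$ having characteristic zero, $\Gr^*(\wh\dR^\an_{k/A})\cong\rmL\Gamma^*_k(\LL^\an_{k/A}[-1])$. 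Using base change of derived divided powers along $k\to B$ together with the fact that $\rmL\Gamma^*$ carries a direct sum to a relative tensor product, the source of $\Gr^*(f)$ is rewritten as $\rmL\Gamma^*_B\big(B(1)\oplus(B\otimes_k\LL^\an_{k/A}[-1])\big)$. Thus both source and target of $\Gr^*(f)$ are free divided power $B$-algebras on their weight-one parts, and it suffices to show that $\Gr^1(f)$ is an isomorphism.

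The crux is a splitting of a transitivity triangle. Applied to the triple $A\to k\to B$, the fundamental triangle of analytic cotangent complexes reads, after shifting,
\[
B\otimes_k\LL^\an_{k/A}[-1]\rra\LL^\an_{B/A}[-1]\rra\LL^\an_{B/k}[-1]\cong B(1),
\]
exhibiting $\Gr^1(\wh\dR^\an_{B/A})$ as an extension of $B(1)$ by $B\otimes_k\LL^\an_{k/A}[-1]$. By hypothesis the composite of ring maps $k\to A\to k$ is the identity of $k$, so by functoriality the composite $\wh\dR^\an_{B/k}\to\wh\dR^\an_{B/A}\to\wh\dR^\an_{B/k}$ is the identity; passing to $\Gr^1$ shows that the natural map $B(1)=\Gr^1(\wh\dR^\an_{B/k})\to\LL^\an_{B/A}[-1]$ is a section of the quotient map in the triangle above. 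Hence the triangle splits, $\LL^\an_{B/A}[-1]\cong B(1)\oplus(B\otimes_k\LL^\an_{k/A}[-1])$, and unwinding the definition of $f$ as the product of the two structure maps $\wh\dR^\an_{B/k}\to\wh\dR^\an_{B/A}$ and $\wh\dR^\an_{k/A}\to\wh\dR^\an_{B/A}$ identifies this splitting with $\Gr^1(f)$. Applying $\rmL\Gamma^*_B$ then shows $\Gr^*(f)$ is an isomorphism, so $f$ is a filtered isomorphism.

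The step I expect to be the main obstacle is the bookkeeping in the last paragraph: matching the self-retraction of $\wh\dR^\an_{B/A}$ onto $\wh\dR^\an_{B/k}$ induced by $k\to A\to k$ with the maps occurring in the fundamental triangle of analytic cotangent complexes, and checking that $\Gr^1(f)$ genuinely is the resulting splitting isomorphism rather than merely an abstract identification of the same two objects. Everything else is formal manipulation of derived divided powers in characteristic zero together with the structural inputs~\Cref{ddR aff computation} and~\Cref{graded pieces of ddR}.
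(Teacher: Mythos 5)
Your proposal is correct and follows essentially the same route as the paper: reduce to graded pieces by completeness, observe that both graded algebras are free symmetric (equivalently, divided power, as we are in characteristic zero) algebras on their first graded pieces, and deduce the $\Gr^1$ isomorphism from the splitting of the cotangent-complex transitivity triangle coming from the sequence $k \to A \to k \to B$. The paper's own proof is just a terser version of this, so the extra bookkeeping you flag (identifying $\Gr^1(f)$ with the splitting map) is a welcome but not divergent elaboration.
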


\begin{proof}
Since both are complete with respect to their filtrations, it suffices to show
the map induces an isomorphism on the graded pieces.
The graded algebra of both sides are the symmetric algebra (over $B$) on their
first graded pieces, hence it suffices to check 
$\Gr^1(\wh\dR^\an_{B/k} \hat{\otimes}_k \wh\dR^\an_{k/A}) \rra 
\Gr^1(\wh\dR^\an_{B/A})$ being an isomorphism.
This follows from the decomposition of analytic cotangent complexes
\[
\LL_{B/A}^\an \cong \LL_{B/k}^\an \oplus (\LL_{A/k}^\an \otimes_A B)
\]
which is deduced from contemplating the sequence
$k \to A \to k \to B$.
\end{proof}

We know that $\wh\dR^\an_{B/k} \cong \mathbb{B}_{\dR}^+(B)$,
a result of Bhatt tells us the underlying algebra of $\wh\dR^\an_{k/A} \cong A$,
explained in below.
Since $A \to k$ is a surjection, the analytic cotangent complex agrees with the classical cotangent complex,
hence we have a filtered isomorphism
\[
\wh\dR^\an_{k/A} \rra \wh\dR_{k/A}.
\]
Now~\cite[Theorem 4.10]{Bha12complete} implies the underlying algebra
$\wh\dR_{k/A}$ is isomorphic to the completion of $A$ along the surjection $A \to k$.
Since $A$ is an Artinian local ring, this completion is simply $A$ itself.
Therefore we get a map of the underlying algebras:
\[
\mathbb{B}_{\dR}^+(B) \otimes_k A \rra \wh\dR^\an_{B/k} \hat{\otimes}_k \wh\dR^\an_{k/A}
\]

\begin{proposition}
The map $\mathbb{B}_{\dR}^+(B) \otimes_k A \rra \wh\dR^\an_{B/k} \hat{\otimes}_k \wh\dR^\an_{k/A}$
above is an isomorphism.
Consequently we have an isomorphism
\[
\mathbb{B}_{\dR}^+(B) \otimes_k A \cong \wh\dR^\an_{B/A}.
\]
\end{proposition}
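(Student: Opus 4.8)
The plan begins by noting that, by the previous Proposition, the target of our map is filtered isomorphic to $\wh\dR^\an_{B/A}$, so it is equivalent to prove that the natural map $\mathbb{B}_{\dR}^+(B)\otimes_k A \to \wh\dR^\an_{B/A}$ is an isomorphism of underlying algebras. Unwinding the construction above, this map is the composite of the tautological identification $\mathbb{B}_{\dR}^+(B)\otimes_k A \cong \wh\dR^\an_{B/k}\otimes_k\wh\dR^\an_{k/A}$ (the underlying object of a tensor product of filtered objects being the tensor product of the underlying objects) with the completion map for the tensor--product (Day convolution) filtration. Thus the whole content is the assertion that the uncompleted tensor $\wh\dR^\an_{B/k}\otimes_k\wh\dR^\an_{k/A}$ in $\DF(k)$ --- formed from the $\ker(\theta)$-adic filtration on $\wh\dR^\an_{B/k}\cong\mathbb{B}_{\dR}^+(B)$ and the Hodge filtration on $\wh\dR^\an_{k/A}\cong\wh\dR_{k/A}$ --- is \emph{already} derived complete, i.e.\ that $\lim_n\Fil^n\bigl(\wh\dR^\an_{B/k}\otimes_k\wh\dR^\an_{k/A}\bigr)\simeq 0$.

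To see this I would leverage that $\wh\dR^\an_{k/A}\cong\wh\dR_{k/A}$ has underlying object the finite $k$-algebra $A$ (the result of Bhatt recalled above), so that $\mathbb{B}_{\dR}^+(B)\otimes_k A$ is a finite $\mathbb{B}_{\dR}^+(B)$-module and in particular $\ker(\theta)$-adically complete. The key extra input I expect to need is that the Hodge-filtration tower $\{\Fil^N\wh\dR_{k/A}\}_N$ is \emph{pro-zero}: its derived limit vanishes (as $\wh\dR_{k/A}$ is Hodge complete), and this is witnessed pro-systematically precisely because the underlying object $A$ is finite over $k$. Granting that, $\mathbb{B}_{\dR}^+(B)\otimes_k(-)$ commutes with the limit $\lim_N\bigl(\wh\dR_{k/A}/\Fil^N\bigr)$ that defines the Hodge completion of $\wh\dR_{k/A}$, so
\[
\wh\dR^\an_{B/k}\otimes_k\wh\dR^\an_{k/A}\;\simeq\;\lim_N\bigl(\wh\dR^\an_{B/k}\otimes_k(\wh\dR_{k/A}/\Fil^N)\bigr),
\]
and each term on the right, being a bounded filtration tensored against a complete one (with the bounded factor eventually having perfect underlying, again by pro-zeroness), is complete; hence so is the limit. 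This gives that the source equals its completion, i.e.\ the map of the Proposition is an isomorphism. Alternatively, once both sides are known complete, one may compare associated gradeds: by the identity $\Gr^*(M\otimes_R N)\cong\Gr^*(M)\otimes_{\Gr^*(R)}\Gr^*(N)$ with $R=k$, both sides have associated graded $\Gr^*\mathbb{B}_{\dR}^+(B)\otimes_k\Gr^*\wh\dR^\an_{k/A}$ and the map is the identity there.

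The hard part will be making the completeness of the uncompleted tensor precise. The Hodge filtration on $\wh\dR_{k/A}$ is \emph{not} bounded --- its graded pieces $\Gr^j\cong\mathrm{L}\wedge^j\LL_{k/A}[-j]$ are nonzero in every degree $j$, already for $A=k[\epsilon]/(\epsilon^2)$ --- so no naive bounded-filtration argument applies verbatim, and one must genuinely exploit the finiteness of $A$ over $k$ (equivalently, the pro-zeroness of the Hodge-filtration tower on $\wh\dR_{k/A}$) in order to push $\mathbb{B}_{\dR}^+(B)\otimes_k(-)$ past the defining limit. This is exactly the step that breaks for infinite-dimensional or non-complete inputs, which is why the statement is confined to Artinian $A$. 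Granting the Proposition, the stated consequence $\mathbb{B}_{\dR}^+(B)\otimes_k A\cong\wh\dR^\an_{B/A}$ follows at once from the previous Proposition.
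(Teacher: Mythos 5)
Your strategy is, in substance, the paper's own: identify the completed tensor with the limit of the quotients by the Day filtration and collapse that limit using Bhatt's identification of $\wh\dR_{k/A}$ with $A$ together with finiteness of $A$ over $k$ (the paper writes $\wh\dR^\an_{B/k}\hat{\otimes}_k\wh\dR^\an_{k/A}\cong\lim_n\lim_m\bigl(\mathbb{B}_{\dR}^+(B)/(\xi)^n\otimes_k\dR_{k/A}/\Fil^m\bigr)$ and collapses the inner limit, then the outer one). However, the step you yourself flag as the crux is left unjustified, and the reason you offer for it is not sufficient as stated: vanishing of the derived limit of the tower $\{\Fil^N\wh\dR_{k/A}\}_N$ (i.e.\ Hodge-completeness) does \emph{not} by itself imply that the tower is pro-zero, and ``$A$ is finite over $k$'' is not by itself a witness. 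What actually supplies this, and what the paper invokes, is the pro-statement of~\cite[Theorem 4.10]{Bha12complete}: the towers $\{\rmH^i(\dR_{k/A}/\Fil^m)\}_m$ are pro-zero for $i\neq 0$ and pro-isomorphic to $A$ for $i=0$. Alternatively you can deduce the same thing by a Mittag--Leffler argument, but then the finiteness you need is \emph{degreewise} finite-dimensionality of each $\rmH^i(\dR_{k/A}/\Fil^m)$ over $k$ (coming from coherence of $\rmL\wedge^j\LL_{k/A}$), since a tower of finite-dimensional vector spaces with vanishing limit is pro-zero; this Mittag--Leffler mechanism, applied after tensoring with $\mathbb{B}_{\dR}^+(B)/(\xi)^n$ to kill the $\mathrm{R}^1\lim$ terms, is exactly how the paper's proof runs. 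Once this input is in place, your commutation of $\mathbb{B}_{\dR}^+(B)\otimes_k(-)$ past the limit, and the final identification using finiteness of $A$ over $k$, go through and match the paper's computation.

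Two smaller points. First, your parenthetical ``the bounded factor eventually having perfect underlying'' is incorrect: $\wh\dR_{k/A}/\Fil^N$ is not a perfect complex in general --- already for $A=k[\epsilon]/(\epsilon^2)$ the map $A\to k$ is not l.c.i., so $\LL_{k/A}$ and hence these quotients are unbounded below --- but nothing is lost, since only degreewise finite-dimensionality over $k$ is needed. Second, the termwise completeness of $\mathbb{B}_{\dR}^+(B)\otimes_k(\wh\dR_{k/A}/\Fil^N)$ and the compatibility of the Day filtration with the limit over $N$ are exactly the kind of bookkeeping the paper avoids by working directly with the double limit $\lim_n\lim_m$; if you flesh out your version, that is where the remaining work lies.
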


\begin{proof}
By definition, we have
\[
\wh\dR^\an_{B/k} \hat{\otimes}_k \wh\dR^\an_{k/A} \cong 
\lim_n \lim_m \mathbb{B}_{\dR}^+(B)/(\xi)^n \otimes_k \dR_{k/A}/\Fil^m,
\]
here we have used the (filtered) identification $\wh\dR^\an_{k/A} \cong \wh\dR_{k/A}$
spelled out before this Proposition.

We claim that for any given $n$, we have an isomorphism
\[
\mathbb{B}_{\dR}^+(B)/(\xi)^n \otimes_k A \cong 
\lim_m \mathbb{B}_{\dR}^+(B)/(\xi)^n \otimes_k \dR_{k/A}/\Fil^m.
\]
Indeed for each $i \in \mathbb{Z}$, we have the following short exact sequence:
\[
\xymatrix{
0 \ar[r] & 
\mathrm{R}^1\lim_m \left(\mathbb{B}_{\dR}^+(B)/(\xi)^n \otimes_k \rmH^{i-1}(\dR_{k/A}/\Fil^m)\right) \ar[ld] & \\
 \rmH^i(\lim_m \left(\mathbb{B}_{\dR}^+(B)/(\xi)^n \otimes_k \dR_{k/A}/\Fil^m\right)) \ar[r] &
\lim_m \left(\mathbb{B}_{\dR}^+(B)/(\xi)^n \otimes_k \rmH^{i}(\dR_{k/A}/\Fil^m)\right) \ar[r] & 0 &
}
\]
Since for each $m$ and $i$, the vector space $\rmH^{i-1}(\dR_{k/A}/\Fil^m)$
is finite dimensional over $k$, we see that the inverse system 
$\mathbb{B}_{\dR}^+(B)/(\xi)^n \otimes_k \rmH^{i-1}(\dR_{k/A}/\Fil^m)$
satisfies Mittag-Leffler condition, hence the $\mathrm{R}^1 \lim$ term vanishes.
By~\cite[Theorem 4.10]{Bha12complete}, we have that the inverse system
$\{\rmH^{i}(\dR_{k/A}/\Fil^m)\}_m$ is pro-isomorphic to $0$ if $i \not= 0$
and is pro-isomorphic to $A$ (since $A$ is finite dimensional over $k$) if $i = 0$,
therefore the above short exact sequence becomes
\[
\rmH^i(\lim_m \left(\mathbb{B}_{\dR}^+(B)/(\xi)^n \otimes_k \dR_{k/A}/\Fil^m\right)) \cong
\begin{cases}
0;~i \not= 0\\
\mathbb{B}_{\dR}^+(B)/(\xi)^n \otimes_k A;~i = 0.
\end{cases}
\]

This gives us the claim above.

Now we have
\[
\wh\dR^\an_{B/k} \hat{\otimes}_k \wh\dR^\an_{k/A} \cong 
\lim_n (\lim_m \mathbb{B}_{\dR}^+(B)/(\xi)^n \otimes_k \dR_{k/A}/\Fil^m)
\cong \lim_n (\mathbb{B}_{\dR}^+(B)/(\xi)^n \otimes_k A)
\cong \mathbb{B}_{\dR}^+(B) \otimes_k A
\]
as desired, where the last identification follows from the fact that $A$ is finite over $k$.
\end{proof}

If one contemplates the example $A = k[\epsilon]/(\epsilon^2)$,
one sees that $\dR^\an_{B/A}/\Fil^i$ does not live in cohomological degree $0$
alone for any $i \geq 2$.

As a consequence of the above Proposition, 
for the $X = \mathrm{Spa}(A)$ we have an equality of presheaves
on $X_{\pe}^\omega$:
\[
\wh\dR^\an_{X_\pe/X} \cong \mathbb{B}_{\dR}^+ \otimes_k \nu^{-1}\cO_X,
\]
in particular the underlying algebra of $\wh\dR^\an_{X_\pe/X}$ pro-\'{e}tale locally
lives in cohomological degree $0$.
Motivated by this computation and results in~\cite{Bha12complete}, we end this article
by asking the following:
\begin{question}
\label{coh bound question}
In what generality shall we expect $\wh\dR^\an_{X_\pe/X}\mid_{X_{\pe}^\omega}$ 
to live in cohomological degree $0$?
And when that happens, can we re-interpret the underlying algebra
via some construction similar to Scholze's $\cO\mathbb{B}_{\dR}^+$
as in~\cite{Sch13} and~\cite{SchCor}?
\end{question}

\section{Appendix: local complete intersections in rigid geometry}
\label{lci discussion}
In this appendix we make a primitive discussion of 
local complete intersection morphisms in rigid geometry.
We remark that the results recorded here hold verbatim with $k$ being a general
complete non-Archimedean field.

In order to talk about local complete intersections,
we need to understand how being of finite Tor 
dimension\footnote{In classical literature such as~\cite{Avr99} 
this corresponds to the notion of having finite flat dimension.} 
behaves under base change
in rigid geometry.

\begin{lemma}
\label{surjection finite Tor}
Let $A$ and $ B$ be two affinoid $k$-algebras, and $A\ra B$ a morphism of Tor dimension $m$.
Let $P \coloneqq A \langle T_1, \ldots, T_n \rangle \twoheadrightarrow B$ be a surjection,
then we have 
\[
\mathrm{Tor}\dim_P(B) \leq m + n.
\]
\end{lemma}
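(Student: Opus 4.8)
The plan is to factor the surjection $P \twoheadrightarrow B$ through the Tate algebra $Q \coloneqq B\langle T_1,\ldots,T_n\rangle$: the map $P \to Q$ will be the base change of $A \to B$ along $A \to P$, and $Q \to B$ an augmentation whose kernel is generated by a regular sequence of length $n$; then I would bound $\mathrm{Tor}\dim_P(B)$ multiplicatively along the tower $P \to Q \to B$ by the standard inequality $\mathrm{Tor}\dim_P(B) \leq \mathrm{Tor}\dim_P(Q) + \mathrm{Tor}\dim_Q(B)$.

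For the first factor, I would use that $A \to P = A\langle T_1,\ldots,T_n\rangle$ is (faithfully) flat, a standard property of Tate algebras over affinoid algebras. Then $Q = B \,\hat{\otimes}_A\, P$ is the flat base change along $A \to P$ of the morphism $A \to B$, which has Tor dimension $m$, so $\mathrm{Tor}\dim_P(Q) \leq m$. Since all the algebras in sight are Noetherian and the completed tensor product agrees with the ordinary one on finitely generated modules, this reduces to the usual commutative-algebra fact that Tor dimension is preserved under flat base change. \emph{This is the one step that requires genuine care in the rigid-analytic setting; the rest is formal.}

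For the second factor, I would note that the images $b_i \in B$ of the variables $T_i$ are power-bounded, because a continuous homomorphism of affinoid algebras carries power-bounded elements (such as the $T_i$) to power-bounded elements. Hence $T_i \mapsto T_i - b_i$ defines a continuous automorphism of $Q = B\langle T_1,\ldots,T_n\rangle$, identifying it with $B\langle S_1,\ldots,S_n\rangle$ and the augmentation $Q \twoheadrightarrow B$ with the quotient by $(S_1,\ldots,S_n)$. This is a regular sequence in $B\langle S_1,\ldots,S_n\rangle$ — indeed $B\langle S_1,\ldots,S_n\rangle/(S_1,\ldots,S_j) = B\langle S_{j+1},\ldots,S_n\rangle$, in which $S_{j+1}$ is a non-zero-divisor — so the Koszul complex on $T_1 - b_1,\ldots,T_n - b_n$ is a resolution of $B$ by finite free $Q$-modules of length $n$, giving $\mathrm{Tor}\dim_Q(B) \leq n$. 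Viewing this Koszul complex as a complex of $P$-modules, each term has Tor dimension $\leq m$ over $P$ by the previous paragraph, so $\mathrm{Tor}^P_i(B,N) = 0$ for all $i > m + n$ and every $P$-module $N$; this is the tower inequality above, and it yields $\mathrm{Tor}\dim_P(B) \leq m + n$.
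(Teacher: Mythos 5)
Your second factor is fine: the images $b_i\in B$ of the $T_i$ are power-bounded, the kernel of $Q=B\langle T_1,\ldots,T_n\rangle \twoheadrightarrow B$ is generated by the regular sequence $T_1-b_1,\ldots,T_n-b_n$, and the Koszul complex gives $\mathrm{Tor}\dim_Q(B)\leq n$; the dévissage along $P\to Q\to B$ is also formal. The genuine gap is exactly the step you flag and then wave through: the claim $\mathrm{Tor}\dim_P(Q)\leq m$ for $Q=B\,\hat{\otimes}_A\,P$. Your justification, that ``the completed tensor product agrees with the ordinary one on finitely generated modules,'' does not apply: $B$ is not a finite $A$-module, so $B\,\hat{\otimes}_A\,P$ genuinely differs from $B\otimes_A P$, and ordinary flat base change along $A\to P$ only controls the Tor dimension of the latter. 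To pass from $B\otimes_A P$ to $B\,\hat{\otimes}_A\,P$ you would need something like flatness of $B\otimes_A P\to B\,\hat{\otimes}_A\,P$, or a comparison of ``analytic'' and algebraic Tor against finite $P$-modules; neither is addressed, and neither is a routine Noetherianity statement. Note also that your claim is essentially a sharpened form of \Cref{base change finite Tor}, which in the paper is \emph{deduced from} the lemma you are proving (precisely by factoring through a surjection so that all base changes involve finite modules, where completed and ordinary tensor do coincide), so appealing to that kind of statement here would be circular.

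The paper's proof takes a different and shorter route that avoids completed base change altogether: take a resolution of $B$ by finite free $P$-modules, observe that the $m$-th syzygy $M=\mathrm{Coker}(d_m)$ is a finitely generated $P$-module which is flat over $A$ (using flatness of $A\to P$ and $\mathrm{Tor}\dim_A(B)\leq m$), and then invoke the syzygy-type bound of \cite[Lemma 6.3]{Li19}, which says that a finitely generated $A$-flat module over $P=A\langle T_1,\ldots,T_n\rangle$ has projective dimension at most $n$ over $P$; splicing gives a length $m+n$ projective resolution of $B$. If you want to salvage your approach, you must either prove the completed base change statement $\mathrm{Tor}\dim_P\bigl(B\,\hat{\otimes}_A\,P\bigr)\leq \mathrm{Tor}\dim_A(B)$ honestly, or replace it by an input of the above syzygy type.
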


The following proof is suggested to us by Johan de Jong.

\begin{proof}
Choose a resolution of $B$ by finite free $P$-modules
\[
\ldots \xrightarrow{d_i} M_i \xrightarrow{d_{i-1}} M_{i-1} \ldots \xrightarrow{d_0} M_0 \twoheadrightarrow B.
\]
Since $P$ is flat over $A$, we see that $M \coloneqq \mathrm{Coker}(d_m)$ is flat over $A$ as
$A \to B$ is assumed to be of Tor dimension 
$m$~\cite[\href{https://stacks.math.columbia.edu/tag/0653}{Tag 0653}]{Sta}.
Moreover $M$ is finitely generated over $P$ since $P$ is Noetherian.
Now we use~\cite[Lemma 6.3]{Li19} to see that $M$ admits a projective resolution
over $P$ of length $n$.
Therefore we get that $B$ has a projective resolution over $P$ of length $m + n$.
\end{proof}

\begin{lemma}
\label{base change finite Tor}
Let $A$ and $ B$ be two affinoid $k$-algebras, and $A\ra B$ a morphism of finite Tor dimension.
Let $C$ be any affinoid $A$-algebra, then the base change (in the realm of rigid geometry)
$C \to B \hat{\otimes}_A C$ is also of finite Tor dimension.
\end{lemma}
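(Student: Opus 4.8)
The plan is to reduce to the affine (Tate) situation handled by \Cref{surjection finite Tor}. First I would choose a presentation of the affinoid $A$-algebra $C$ as a quotient $A\langle T_1, \ldots, T_n \rangle \twoheadrightarrow C$ for some $n$, so that $C$ is a cyclic module over the Tate algebra $P \coloneqq A\langle T_1, \ldots, T_n\rangle$. Base changing the surjection $P \twoheadrightarrow C$ along the flat map $A \to B$ (completed tensor with a Tate algebra over an affinoid is flat, since $B \hat\otimes_A P = B\langle T_1, \ldots, T_n\rangle$), we get a surjection $Q \coloneqq B\langle T_1, \ldots, T_n\rangle \twoheadrightarrow B\hat\otimes_A C$. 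The point is that the rigid-analytic base change $C \to B\hat\otimes_A C$ is obtained from the flat map $P \to Q$ by the (underived) base change along $P \to C$, i.e.\ $B\hat\otimes_A C \cong C \otimes_P Q$, and moreover this underived tensor agrees with the derived one because $Q$ is flat over $P$.

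Next I would bound the Tor dimension. Let $m \coloneqq \mathrm{Tor}\dim_A(B) < \infty$. Since $Q = B\langle T_1, \ldots, T_n\rangle$ is flat over $P = A\langle T_1, \ldots, T_n\rangle$ and $\mathrm{Tor}\dim_{A}(B) = m$, one checks $\mathrm{Tor}\dim_{P}(Q) \leq m$: any $P$-module $N$, when resolved $m$-steps by flats over $A$ and then tensored up, gives that $\mathrm{Tor}^P_{>m}(N, Q) = 0$ by the flatness of $P$ over $A$ and a change-of-rings spectral sequence (or directly: $Q \otimes_P^{\mathbb{L}} N \cong (B \otimes_A^{\mathbb{L}} P) \otimes_P^{\mathbb{L}} N \cong B \otimes_A^{\mathbb{L}} N$, and $N$ as an $A$-module has Tor amplitude controlled by $m$ against $B$). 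Now for any $(B\hat\otimes_A C)$-module $N'$, restrict along $Q \twoheadrightarrow B\hat\otimes_A C$ and compute
\[
(B\hat\otimes_A C) \otimes_{C}^{\mathbb{L}} N' \cong (C \otimes_P^{\mathbb{L}} Q) \otimes_C^{\mathbb{L}} N' \cong Q \otimes_P^{\mathbb{L}} N',
\]
so $\mathrm{Tor}\dim_{C}(B\hat\otimes_A C) \leq \mathrm{Tor}\dim_P(Q) \leq m < \infty$. Here I must be a little careful that the completed tensor products appearing really do compute the derived tensor products I want; this is where the flatness observations above are doing the work, since flat base change means no higher Tor terms intervene and the completion is harmless.

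The main obstacle I anticipate is bookkeeping around the distinction between $\hat\otimes$ in rigid geometry and the algebraic $\otimes^{\mathbb{L}}$: one needs that $B\langle T_1, \ldots, T_n\rangle$ is genuinely flat over $A\langle T_1, \ldots, T_n\rangle$ when $B$ is of finite Tor dimension over $A$ (not necessarily flat), which is false in general — so in fact I would \emph{not} claim $Q$ is flat over $P$, but rather work directly with the identity $Q \otimes_P^{\mathbb{L}} N \cong B \otimes_A^{\mathbb{L}} N$ for $P$-modules $N$, using that $Q = B \hat\otimes_A P$ and that completed tensor with the Noetherian affinoid $P$-module $N$ over the affinoid algebra $A$ agrees with the ordinary derived tensor (flatness of the completion functor on finitely generated modules over Noetherian affinoids). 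This reduces everything to the plain statement $\mathrm{Tor}\dim_A(B) = m \Rightarrow \mathrm{Tor}^A_{>m}(B, N) = 0$, and the Tate-algebra variables are along for the ride. Once that identity is in hand, the Tor-dimension bound for $C \to B\hat\otimes_A C$ follows formally as above, and in particular it is finite. (One can even extract the quantitative bound $\mathrm{Tor}\dim_C(B\hat\otimes_A C)\le \mathrm{Tor}\dim_A(B)$, though the statement only asks for finiteness, and invoking \Cref{surjection finite Tor} gives an alternative cruder bound $m+n$ if the clean argument runs into trouble.)
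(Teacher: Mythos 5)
Your reduction to the Tate algebras $P = A\langle T_1,\ldots,T_n\rangle \twoheadrightarrow C$ and $Q = B\hat{\otimes}_A P = B\langle T_1,\ldots,T_n\rangle$ is sensible, and you correctly caught that $Q$ need not be flat over $P$; but the repair you propose does not close the gap. The chain $(B\hat{\otimes}_A C)\otimes_C^{\mathbb{L}}N' \cong (C\otimes_P^{\mathbb{L}}Q)\otimes_C^{\mathbb{L}}N' \cong Q\otimes_P^{\mathbb{L}}N'$ needs precisely the identification of the \emph{underived} tensor $B\hat{\otimes}_A C \cong C\otimes_P Q$ with the \emph{derived} one $C\otimes_P^{\mathbb{L}}Q$, i.e.\ the vanishing of $\mathrm{Tor}^P_i(C,Q)$ for $i>0$; that is exactly what the retracted flatness of $Q/P$ was doing, and your substitute identity does not supply it. Moreover that substitute identity $Q\otimes_P^{\mathbb{L}}N \cong B\otimes_A^{\mathbb{L}}N$ is false as stated: already for $N=P$ the left side is $B\langle T_1,\ldots,T_n\rangle$ while the right side is the uncompleted $B\otimes_A A\langle T_1,\ldots,T_n\rangle$ (fixing this requires knowing that the map from the algebraic to the completed tensor product is flat, which is itself a nontrivial claim). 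Without Tor-independence, a bound on the amplitude of $Q\otimes_P^{\mathbb{L}}C$ says nothing about the Tor dimension of the honest $C$-algebra $Q\otimes_P C = B\hat{\otimes}_A C$. Your fallback is also unavailable: \Cref{surjection finite Tor} takes as input a map of affinoids already known to be of finite Tor dimension together with a surjection from a Tate algebra over the base; to apply it to $Q\twoheadrightarrow B\hat{\otimes}_A C$ over the base $C$ you would need the very finiteness you are trying to prove. (A minor further point: Tor dimension of $B\hat{\otimes}_A C$ over $C$ must be tested against all $C$-modules, not only $(B\hat{\otimes}_A C)$-modules.)

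For comparison, the paper argues the other way around: it presents $B$, not $C$, as a quotient of $P=A\langle T_1,\ldots,T_n\rangle$, so that \Cref{surjection finite Tor} applies and gives $\mathrm{Tor}\dim_P(B)\le m+n$; it then factors $C \to C\langle T_1,\ldots,T_n\rangle \to B\otimes_P C\langle T_1,\ldots,T_n\rangle \cong B\hat{\otimes}_A C$, with the first map flat and the second the base change along $P \to C\langle T_1,\ldots,T_n\rangle$ of the finite-Tor-dimension surjection $P\twoheadrightarrow B$, and concludes by composition. Note, however, that the step you are struggling with is the same one this route passes through, and it genuinely requires more than formal manipulation: an underived base change of a map of finite Tor dimension need not have finite Tor dimension. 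For instance, with $A=k\langle x\rangle$, $B=A/(x)$ (Tor dimension $1$ over $A$) and $C=A/(x^2)$, one has $B\hat{\otimes}_A C\cong k$, which has infinite Tor dimension over $C\cong k[x]/(x^2)$. So some flatness or Tor-independence input (or a derived formulation of the base change) is indispensable at this point, and the gap in your write-up cannot be closed by the purely formal $\otimes^{\mathbb{L}}$ bookkeeping you describe.
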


\begin{proof}
Choose a surjection $A \langle T_1, \ldots, T_n \rangle \twoheadrightarrow B$,
which again is of finite Tor dimension by~\Cref{surjection finite Tor}.
Then we have a factorization:
\[
C \to C\langle T_1, \ldots, T_n \rangle \to 
B \otimes_{A \langle T_1, \ldots, T_n \rangle} C \langle T_1, \ldots, T_n \rangle \cong B \hat{\otimes}_A C.
\]
Since the first arrow is flat and the second arrow,
being base change of an arrow of finite Tor dimension, is of finite Tor dimension,
we conclude that the composition is of finite Tor 
dimension~\cite[\href{https://stacks.math.columbia.edu/tag/066J}{Tag 066J}]{Sta}.
\end{proof}

\begin{proposition}
\label{lci properties}
Let $A \to B$ a morphism of $k$-affinoid algebras.
Then the following are equivalent:
\begin{enumerate}
\item any surjection $A\langle T_1, \ldots, T_n\rangle \twoheadrightarrow B$ is a local complete intersection;
\item there exists a surjection $A\langle T_1, \ldots, T_n\rangle \twoheadrightarrow B$ which 
is a local complete intersection;
\item $A \to B$ is of finite Tor dimension 
and the analytic cotangent complex $\LL_{B/A}^\an$ is a perfect $B$-complex.
\end{enumerate}
Moreover, any of these three equivalent conditions implies that $\LL_{B/A}^\an$
is a perfect complex with Tor amplitude in $[-1,0]$.
\end{proposition}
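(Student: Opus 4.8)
\emph{Proof proposal.} The plan is to prove the cycle of implications $(1)\Rightarrow(2)\Rightarrow(3)\Rightarrow(1)$, reading off the ``moreover'' clause from the proof of $(2)\Rightarrow(3)$. The tool used throughout is the transitivity triangle of analytic cotangent complexes for the triple $A\to P\to B$, where $P\coloneqq A\langle T_1,\dots,T_n\rangle$ is a relative Tate algebra surjecting onto $B$; such a $P$ exists because $B$ is $k$-affinoid, so $(1)\Rightarrow(2)$ is immediate.

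For $(2)\Rightarrow(3)$, fix a local complete intersection surjection $\pi\colon P\twoheadrightarrow B$ with kernel $I$. Since $A\to P$ is flat and $\pi$ is resolved, locally on $B$, by the Koszul complex on a regular sequence generating $I$ (hence has finite Tor dimension), the composite $A\to B$ has finite Tor dimension by the argument of \Cref{base change finite Tor} (\cite[Tag~066J]{Sta}). For the cotangent complex I would run
\[
\LL^\an_{P/A}\otimes_P B\longrightarrow \LL^\an_{B/A}\longrightarrow \LL^\an_{B/P}.
\]
Because $A\to P$ is smooth, \Cref{GR statements}~(3) gives $\LL^\an_{P/A}\simeq\Omega^{1,\an}_{P/A}\cong P^{\oplus n}$, so $\LL^\an_{P/A}\otimes_P B\cong B^{\oplus n}$ sits in degree $0$; because $\pi$ is surjective, \Cref{GR statements}~(4) gives $\LL^\an_{B/P}\simeq\LL_{B/P}$, which for the regular immersion $\pi$ is $(I/I^2)[1]$ with $I/I^2$ finite projective over $B$ (\cite{Sta}). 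Thus $\LL^\an_{B/A}$ is the fiber of a map $(I/I^2)[1]\to B^{\oplus n}[1]$, hence is computed by a two-term complex $[\,I/I^2\to B^{\oplus n}\,]$ of finite projective $B$-modules in cohomological degrees $-1$ and $0$; in particular it is a perfect complex with Tor amplitude in $[-1,0]$. This gives $(3)$ and the ``moreover'' statement at once.

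For $(3)\Rightarrow(1)$, let $\pi\colon P\twoheadrightarrow B$ be \emph{any} surjection from a relative Tate algebra, with kernel $I$. By \Cref{surjection finite Tor}, finite Tor dimension of $A\to B$ forces $B$ to have finite Tor dimension over $P$, equivalently finite projective dimension, $B$ being finitely generated over the Noetherian ring $P$. Feeding ``$\LL^\an_{B/A}$ perfect'' and ``$\LL^\an_{P/A}\otimes_P B\cong B^{\oplus n}$ perfect'' into the transitivity triangle shows that $\LL^\an_{B/P}$ is perfect, and by \Cref{GR statements}~(4) this is the classical cotangent complex $\LL_{B/P}$. I would then invoke Avramov's theorem settling Quillen's conjecture \cite{Avr99}: a finite type homomorphism of Noetherian rings whose cotangent complex has finite projective dimension is a local complete intersection. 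Applied to $\pi$ this shows $\pi$ is a local complete intersection; since $\pi$ was arbitrary, $(1)$ follows.

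The main obstacle is this last step: there is no soft passage from ``$\LL_{B/P}$ is perfect'' to the vanishing of the higher cotangent homology of $B/P$ (equivalently, to $I$ being locally a regular sequence) — this is precisely the hard content of Avramov's theorem. The remaining work is bookkeeping: identifying the analytic cotangent complex with the classical one via \Cref{GR statements}~(4) exactly where commutative-algebra inputs (the structure of regular immersions, and Avramov's theorem) are used, and checking that the two-term model for $\LL^\an_{B/A}$ consists of finite \emph{projective} modules so that the Tor-amplitude bound is literally $[-1,0]$.
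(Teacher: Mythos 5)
Your proposal is correct and follows essentially the same route as the paper: $(2)\Rightarrow(3)$ via the transitivity triangle for $A\to P\to B$ together with \Cref{GR statements} (3) and (4) (your explicit two-term model $[\,I/I^2\to B^{\oplus n}\,]$ is just a more concrete form of the paper's Tor-amplitude bookkeeping, and yields the same ``moreover'' clause), and $(3)\Rightarrow(1)$ via \Cref{surjection finite Tor}, transfer of perfectness through the triangle, and Avramov's theorem. The only nitpick is your paraphrase of Avramov: his Theorem 1.4 (Quillen's conjecture) also requires the homomorphism to be of finite flat dimension, which is exactly why you (and the paper) first establish finite Tor dimension of the surjection $P\twoheadrightarrow B$ before invoking it.
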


\begin{proof}
It is easy to see that (1) implies (2).

To see (2) implies (3), first of all
$A\langle T_1, \ldots, T_n\rangle \twoheadrightarrow B$ is a local complete intersection
implies that it is of finite Tor dimension.
Since $A \to A\langle T_1, \ldots, T_n\rangle$ is flat,
we see that $A \to B$ is also finite Tor dimension 
by~\cite[\href{https://stacks.math.columbia.edu/tag/0653}{Tag 0653}]{Sta}.
Next we look at the triangle $A \to A\langle T_1, \ldots, T_n\rangle \to B$,
which gives rise to a triangle of analytic cotangent complexes:
\[
\LL^\an_{A\langle T_1, \ldots, T_n\rangle/A} \otimes_A B \to \LL^\an_{B/A} \to 
\LL^\an_{B/A\langle T_1, \ldots, T_n\rangle}.
\]
Now~\Cref{GR statements} (3) gives that the first term is a perfect complex with Tor amplitude in $[0,0]$,
while condition (2) and~\Cref{GR statements}.(4) implies that the third term is 
a perfect complex with Tor amplitude in $[-1,-1]$,
hence we see that (2) implies (3) and gives the last sentence as well.

Lastly we need to show that (3) implies (1).
To that end we apply Avramov's solution of Quillen's conjecture~\cite{Avr99}.
As $A \to B$ is of finite Tor dimension,
we see that any surjection $A\langle T_1, \ldots, T_n\rangle \twoheadrightarrow B$
has finite Tor dimension by~\Cref{surjection finite Tor}.
The previous paragraph shows that $\LL_{B/A}^\an$ being a perfect complex
is equivalent to the classical cotangent complex 
$\LL_{B/A\langle T_1, \ldots, T_n\rangle}$ being a perfect complex.
Now we use Avramov's result~\cite[Theorem 1.4]{Avr99} to conclude that 
$A\langle T_1, \ldots, T_n\rangle \twoheadrightarrow B$ is a local complete intersection.
\end{proof}

\begin{definition}
\label{lci def}
Let $A \to B$ be a morphism of $k$-affinoid algebras.
The morphism $A \to B$ of $k$-affinoid algebras
is called \emph{a local complete intersection} if one of the three equivalent conditions
in~\Cref{lci properties} is satisfied.

Let $Y \to X$ be a morphism of rigid spaces over $k$.
Then this morphism is called \emph{a local complete  intersection}
if for any pair of affinoid domains $U$ and $V$ in $X$ and $Y$,
such that the image of $V$ is contained in $U$,
the induced map of $k$-affinoid algebras
is a local complete intersection.
\end{definition}

We leave it as an exercise (using~\Cref{GR statements}) that a morphism being
a local complete intersection may be checked locally on the source and target.
We caution readers that there is a notion of local complete intersection morphism
between Noetherian rings, 
while the notion we define here should (clearly) only be considered in the situation
of rigid geometry. 
These two notions agree when the morphism considered is a surjection.
We hope this slight abuse of notion will not cause any confusion.
But as a sanity check, let us show here that this notion matches
the corresponding notion in classical algebraic geometry  
under rigid-analytification.
The following is suggested to us by David Hansen.

\begin{proposition}
Let $f \colon X \to Y$ be a morphism of schemes locally of finite type over 
a $k$-affinoid algebra $A$
with rigid-analytification $f^\an \colon X^\an \to Y^\an$.
Then $f$ is a local complete intersection (in the classical sense)
if and only if $f^\an$ is a local complete intersection (in the sense of~\Cref{lci def}).
\end{proposition}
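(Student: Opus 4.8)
The plan is to reduce both conditions to the homological criterion of \Cref{lci properties}, namely ``finite Tor dimension together with a perfect cotangent complex'', and then transport that criterion across analytification.

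First I would fix an affine presentation. Being a local complete intersection is local on source and target, both classically and in the sense of \Cref{lci def}, so we may assume $Y=\operatorname{Spec}(B)$ and $X=\operatorname{Spec}(C)$ are affine with $B,C$ of finite type over $A$, and choose a surjection $P\coloneqq B[T_1,\dots,T_n]\twoheadrightarrow C$ with kernel $I$. Using the standard compatibility of rigid analytification with closed immersions and with open subspaces, $X^{\an}$ is the closed analytic subspace cut out by $I$ inside $(\operatorname{Spec}P)^{\an}$, and the latter is covered by affinoid subdomains $\Spa(D,D^{+})$ with $D$ a (rescaled) Tate algebra over an affinoid subdomain $\Spa(B',B'^{+})$ of $Y^{\an}$; the structural map $P\to D$ is flat, since analytification of a finite type $A$-scheme gives flat maps and Tate algebras are flat over polynomial algebras. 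Hence $X^{\an}$ is covered by the $\Spa(C',C'^{+})$ with $C'=D/ID=C\otimes_{P}D$, and $C\to C'$ is flat. Finally, as $A$ is $k$-affinoid and hence Jacobson, $C$ is Jacobson with residue fields at closed points finite over $k$; these closed points are exactly the classical points of $X^{\an}$, so for any cover as above the flat maps $\operatorname{Spec}(C')\to\operatorname{Spec}(C)$ jointly meet every closed point and, being flat over a Jacobson ring, jointly surject onto $\operatorname{Spec}(C)$; the same applies to $\operatorname{Spec}(D)\to\operatorname{Spec}(P)$.

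Next I would read off the analytic side from \Cref{lci properties}: the map $B'\to C'$ is a local complete intersection iff it has finite Tor dimension and $\LL_{C'/B'}^{\an}$ is a perfect $C'$-complex. Feeding the transitivity triangle for $B'\to D\to C'$ through~\Cref{GR statements}\,(3) (so $\LL_{D/B'}^{\an}$ is finite free) and~\Cref{GR statements}\,(4) (so $\LL_{C'/D}^{\an}=\LL_{C'/D}$) shows $\LL_{C'/B'}^{\an}$ is perfect iff $\LL_{C'/D}$ is, and flat base change along $P\to D$ identifies $\LL_{C'/D}$ with $\LL_{C/P}\otimes_{C}C'$. Since a cover's worth of $\operatorname{Spec}(C')$'s surjects flatly onto $\operatorname{Spec}(C)$, perfectness descends: all the $\LL_{C/P}\otimes_{C}C'$ are perfect iff $\LL_{C/P}$ is perfect over $C$, equivalently (transitivity triangle for $B\to P\to C$, whose first term is finite free) iff $\LL_{C/B}$ is perfect over $C$. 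A parallel, easier argument handles Tor dimensions: \Cref{surjection finite Tor} together with flatness of $B'\to D$ makes finiteness of the Tor dimension of $B'\to C'$ equivalent to that of $D\to C'$, which is a flat base change of $P\to C$ (\Cref{base change finite Tor}), and the same covering argument\,---\,using that $C$ is a finite $P$-module, so that faithfully-flat descent of projective dimension applies to the localizations\,---\,matches this with finiteness of the local Tor dimension of $P\to C$, hence of $B\to C$. Therefore $f^{\an}$ is a local complete intersection if and only if $\LL_{C/B}$ is perfect over $C$ and $B\to C$ is locally of finite Tor dimension.

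It remains to recognize this as the classical condition. Classically, $B\to C$ is a local complete intersection iff the surjection $P\twoheadrightarrow C$ is, which\,---\,applying Avramov's theorem \cite[Theorem~1.4]{Avr99} exactly as in the proof of $(3)\Rightarrow(1)$ of \Cref{lci properties}, after noting via \Cref{surjection finite Tor} and \cite[\href{https://stacks.math.columbia.edu/tag/0653}{Tag~0653}]{Sta} that $P\to C$ inherits finite Tor dimension from $B\to C$\,---\,holds iff $\LL_{C/P}$ is perfect and $P\to C$ has finite Tor dimension; as in the previous paragraph this is equivalent to $\LL_{C/B}$ being perfect over $C$ and $B\to C$ being locally of finite Tor dimension, i.e.\ to $f^{\an}$ being a local complete intersection. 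This proves the proposition. The step I expect to be the real obstacle is the analytification input black-boxed above: flatness of the structural maps $P\to D$, compatibility of analytification with the closed immersion in the precise form $C'=C\otimes_{P}D$, and the claim\,---\,via the Jacobson property\,---\,that an analytification cover is large enough for perfectness and finite Tor dimension to descend from the $C'$'s to $C$; these are standard properties of analytification (cf.\ \cite{GR03} and the forthcoming work cited in the introduction), but setting them up carefully in the adic-space language, together with the descent bookkeeping, is where the work lies, the rest being formal manipulation of cotangent complexes as in the Appendix.
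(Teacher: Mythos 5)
Your proposal is correct and is essentially the paper's own argument: both proofs hinge on flatness (indeed faithful flatness at the classical points, which correspond to closed points by the Jacobson property) of the analytification, the compatibility of the (analytic) cotangent complex with this flat base change, and the homological characterization of being a local complete intersection via \Cref{lci properties} and Avramov's theorem. The paper merely packages this more compactly --- it reduces to a closed embedding by replacing $Y$ with $Y\times\mathbb{A}^n$, identifies $\iota^*\LL_{X/Y}\simeq\LL^\an_{X^\an/Y^\an}$ for the map of ringed sites $\iota\colon X^\an\to X$, and checks perfectness along the faithfully flat local maps $\cO_{X,x}\to\cO_{X^\an,x}$ --- whereas you run the same comparison over an explicit affinoid cover and additionally track the finite-Tor-dimension condition; the analytification facts you black-box are exactly the ones the paper also invokes without proof.
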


\begin{proof}
We first reduce to the case where both of $X$ and $Y$ are affine.
Then we may check this after fiber product $Y$ with an affine space so that
$f$ is a closed embedding.
In this situation, we have identification of ringed sites $X^\an \cong X \times_Y Y^\an$
and an identification of cotangent complexes:
\[
\iota^*\LL_{X/Y} \simeq \LL^\an_{X^\an/Y^\an},
\]
where $\iota \colon X^\an \to X$ is the natural map of ringed sites.

Now we use the fact that classical Tate points on $X^\an$ is in bijection
with closed points on $X$, and for any such point $x$, the map
$\iota^\sharp \colon \cO_{X,x} \to \cO_{X^\an,x}$ of local rings is faithfully flat.
Therefore we can check $\LL_{X/Y}$ being perfect by pulling back along
$\iota$, hence $\LL_{X/Y}$ is perfect if and only if $\LL^\an_{X^\an/Y^\an}$
is perfect, and this finishes the proof.
\end{proof}

%

Next we turn to understand the localization of analytic cotangent complexes
for a local complete intersection morphism.

Let us introduce some notions:
\begin{definition}
Let $A \to B$ be a morphism of $k$-affinoid algebras.
Let $\mathfrak{m} \subset B$ be a maximal ideal, the \emph{embedded dimension of $B/A$
at $\mathfrak{m}$} is defined to be the following
\[
\dim_{B/A,\mathfrak{m}} \coloneqq \dim_{\kappa(\mathfrak{m})} (\Omega^\an_{B/A} \otimes_B B/\mathfrak{m}).
\]

Let $\mathfrak{n}$ be the preimage of $\mathfrak{m}$ in $A$ (which is also a maximal ideal),
we define the \emph{embedded codimension of $B/A$ at $\mathfrak{m}$} to be
\[
\dim_{B/A,\mathfrak{m}} + \dim(A_\mathfrak{n}) - \dim(B_\mathfrak{m}).
\]

The \emph{embedded codimension of $B/A$} is the supremum of that at all maximal ideals 
$\mathfrak{m} \subset B$.
\end{definition}

\begin{proposition}
\label{embedded codimension control}
Let $A \to B$ be a local complete intersection morphism of $k$-affinoid algebras.
Then at any maximal ideal $\mathfrak{m} \subset B$, there is a presentation of the analytic cotangent complex
\[
\LL^\an_{B/A} \otimes_B B_{\mathfrak{m}} \simeq 
\left[B_{\mathfrak{m}}^{\oplus c(\mathfrak{m})} \to B_{\mathfrak{m}}^{\oplus d(\mathfrak{m})}\right]
\]
where $c(\mathfrak{m})$ is the embedded codimension of $B/A$ at $\mathfrak{m}$
and $d(\mathfrak{m})$ is the embedded dimension of $B/A$ at $\mathfrak{m}$.
Here $B_{\mathfrak{m}}^{\oplus d(\mathfrak{m})}$ is put in degree $0$.

In particular the Tor amplitude of $\rmL \Symm^i \LL^\an_{B/A}$ is always in $[-\min\{c,i\},0]$
where $c$ is the embedded codimension of $B/A$.
\end{proposition}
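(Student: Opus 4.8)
The plan is to produce a concrete two-term free presentation of the analytic cotangent complex after localizing, and then read off the ranks. By~\Cref{lci properties} we may choose a surjection $P\coloneqq A\langle T_1,\ldots,T_n\rangle \twoheadrightarrow B$ which is a local complete intersection, with kernel $I$. Fix a maximal ideal $\mathfrak{m}\subset B$ and let $\mathfrak{q}\subset P$, $\mathfrak{n}\subset A$ be its preimages. Since $P\twoheadrightarrow B$ is lci, $I_{\mathfrak{q}}$ is generated by a regular sequence, so $(I/I^2)\otimes_B B_{\mathfrak{m}}$ is free over $B_{\mathfrak{m}}$ of rank $r(\mathfrak{m})\coloneqq \mathrm{ht}(I_{\mathfrak{q}})$. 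First I would feed the triple $A\to P\to B$ into the transitivity triangle of analytic cotangent complexes
\[
\LL^\an_{P/A}\otimes_P B \rra \LL^\an_{B/A} \rra \LL^\an_{B/P}.
\]
By~\Cref{GR statements}~(3) the left term is $\Omega^\an_{P/A}\otimes_P B\cong B^{\oplus n}$ concentrated in degree $0$ (as $A\to P$ is smooth), and by~\Cref{GR statements}~(4) together with the lci hypothesis the right term is $\LL_{B/P}\cong (I/I^2)[1]$, which after localizing at $\mathfrak{m}$ becomes $B_{\mathfrak{m}}^{\oplus r(\mathfrak{m})}[1]$. Rotating the localized triangle exhibits $\LL^\an_{B/A}\otimes_B B_{\mathfrak{m}}$ as the cofiber of a map $\phi\colon B_{\mathfrak{m}}^{\oplus r(\mathfrak{m})}\to B_{\mathfrak{m}}^{\oplus n}$; thus it is a perfect complex of Tor amplitude $[-1,0]$ (consistent with~\Cref{lci properties}) represented by $[B_{\mathfrak{m}}^{\oplus r(\mathfrak{m})}\xrightarrow{\phi} B_{\mathfrak{m}}^{\oplus n}]$ in cohomological degrees $-1$ and $0$.

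Next I would pass to a minimal free resolution over the Noetherian local ring $B_{\mathfrak{m}}$: the complex above is quasi-isomorphic to $[B_{\mathfrak{m}}^{\oplus c(\mathfrak{m})}\xrightarrow{\psi} B_{\mathfrak{m}}^{\oplus d(\mathfrak{m})}]$ with $\psi$ having entries in $\mathfrak{m}B_{\mathfrak m}$, where $d(\mathfrak{m})=\dim_{\kappa(\mathfrak{m})}\rmH^0(\LL^\an_{B/A}\otimes_B\kappa(\mathfrak{m}))$ and $c(\mathfrak{m})=\dim_{\kappa(\mathfrak{m})}\rmH^{-1}(\LL^\an_{B/A}\otimes_B\kappa(\mathfrak{m}))$. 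By~\Cref{GR statements}~(2) and right-exactness of $\otimes$ one has $\rmH^0(\LL^\an_{B/A}\otimes_B\kappa(\mathfrak{m}))=\Omega^\an_{B/A}\otimes_B\kappa(\mathfrak{m})$, so $d(\mathfrak{m})$ is precisely the embedded dimension of $B/A$ at $\mathfrak{m}$. Comparing the Euler characteristics of the two free presentations of $\LL^\an_{B/A}\otimes_B B_{\mathfrak m}$ gives $d(\mathfrak{m})-c(\mathfrak{m})=n-r(\mathfrak{m})$, i.e.\ $c(\mathfrak{m})=r(\mathfrak{m})-n+d(\mathfrak{m})$. Now I invoke the dimension theory of affinoid algebras: $A\to P$ is flat with fibre over $\mathfrak{n}$ equal to $\kappa(\mathfrak{n})\langle T_1,\ldots,T_n\rangle$, which has pure dimension $n$, so $\dim P_{\mathfrak{q}}=\dim A_{\mathfrak{n}}+n$; since $I_{\mathfrak{q}}$ is generated by a regular sequence we get
\[
r(\mathfrak{m})=\mathrm{ht}(I_{\mathfrak{q}})=\dim P_{\mathfrak{q}}-\dim B_{\mathfrak{m}}=\dim A_{\mathfrak{n}}+n-\dim B_{\mathfrak{m}}.
\]
Substituting yields $c(\mathfrak{m})=d(\mathfrak{m})+\dim A_{\mathfrak{n}}-\dim B_{\mathfrak{m}}$, which is exactly the embedded codimension of $B/A$ at $\mathfrak{m}$; this establishes the asserted presentation, with $B_{\mathfrak m}^{\oplus d(\mathfrak m)}$ in degree $0$.

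For the last assertion, Tor amplitude may be checked after localizing at each maximal ideal $\mathfrak{m}$, and $\rmL\Symm^i_B(\LL^\an_{B/A})\otimes_B B_{\mathfrak{m}}\cong \rmL\Symm^i_{B_{\mathfrak{m}}}(\LL^\an_{B/A}\otimes_B B_{\mathfrak{m}})$. Write $M\coloneqq\LL^\an_{B/A}\otimes_B B_{\mathfrak{m}}$ as the cofiber of $\psi\colon F_1\to F_0$ with $F_1=B_{\mathfrak{m}}^{\oplus c(\mathfrak{m})}$, $F_0=B_{\mathfrak{m}}^{\oplus d(\mathfrak{m})}$, so that $F_0\to M\to F_1[1]$ is a cofiber sequence. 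Since $B$ contains $\mathbb{Q}$, $\rmL\Symm^i M$ carries the standard decreasing filtration attached to this extension (cf.~\cite[V.4]{Ill71}), with associated graded pieces $\Symm^{i-j}_{B_{\mathfrak{m}}}(F_0)\otimes_{B_{\mathfrak{m}}}(\wedge^{j}F_1)[j]$ for $0\le j\le i$; each is a finite free $B_{\mathfrak{m}}$-module placed in cohomological degree $-j$, and it vanishes as soon as $j>c(\mathfrak{m})$. Hence $\rmL\Symm^i M$ has Tor amplitude in $[-\min\{c(\mathfrak{m}),i\},0]$, and since $c(\mathfrak{m})\le c$ for every $\mathfrak{m}$ (by definition of the embedded codimension $c$ of $B/A$), the stated bound follows.

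The step I expect to be the main obstacle is the dimension bookkeeping in the middle paragraph: one must carefully justify $\dim P_{\mathfrak{q}}=\dim A_{\mathfrak{n}}+n$ and the identity $r(\mathfrak{m})=\mathrm{ht}(I_{\mathfrak{q}})$ using flatness of $A\to A\langle T\rangle$ together with catenariness/excellence of affinoid algebras, and make sure that the lci hypothesis genuinely forces $I_{\mathfrak{q}}$ to be generated by precisely $\mathrm{ht}(I_{\mathfrak{q}})$ elements forming a regular sequence. The remaining ingredients — the transitivity triangle, existence and Betti numbers of minimal free resolutions over a Noetherian local ring, and the Koszul-type filtration on derived symmetric powers of a mapping cone — are standard.
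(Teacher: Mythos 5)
Your proof is correct, but it reaches the presentation by a genuinely different route than the paper. You take an arbitrary lci presentation $P=A\langle T_1,\dots,T_n\rangle\twoheadrightarrow B$, localize the transitivity triangle to get $[B_{\mathfrak m}^{\oplus r(\mathfrak m)}\to B_{\mathfrak m}^{\oplus n}]$, and then pin down the ranks of a minimal two-term free model over $B_{\mathfrak m}$ by combining $\rmH^0(\LL^\an_{B/A}\otimes_B\kappa(\mathfrak m))\cong\Omega^\an_{B/A}\otimes_B\kappa(\mathfrak m)$ with an Euler-characteristic count and the dimension identities $\dim P_{\mathfrak q}=\dim A_{\mathfrak n}+n$ and $r(\mathfrak m)=\operatorname{ht}(I_{\mathfrak q})=\dim P_{\mathfrak q}-\dim B_{\mathfrak m}$; these are fine as stated (flatness of $A\to A\langle T\rangle$ with $n$-dimensional Tate fibers, plus the fact that an ideal generated by a regular sequence of length $r$ in a Noetherian local ring has height $r$ and cuts the dimension by exactly $r$ --- no catenariness is actually needed, so the obstacle you anticipated is only routine). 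The paper instead constructs a presentation already adapted to $\mathfrak m$: after shrinking to a rational domain it picks $d(\mathfrak m)$ functions whose differentials generate $\Omega^\an_{B/A}$ at $\mathfrak m$, obtains an unramified map $A\langle T_1,\dots,T_{d(\mathfrak m)}\rangle\to B$, factors it via Huber's Proposition 1.6.8 as an \'{e}tale map followed by a surjection $C\twoheadrightarrow B$, and then the kernel of $C\to B$ at $\mathfrak m$ is generated by a regular sequence of length $c(\mathfrak m)$, so both ranks fall out of the transitivity triangle for $A\to C\to B$ directly. Your route trades Huber's unramified/\'{e}tale structure theory for minimal free resolutions and elementary local dimension theory (which the paper's argument also implicitly uses to see that its regular sequence has length exactly $c(\mathfrak m)$), and it has the mild bonus of identifying $c(\mathfrak m)$ and $d(\mathfrak m)$ intrinsically as $\dim_{\kappa(\mathfrak m)}\rmH^{-1}$ and $\dim_{\kappa(\mathfrak m)}\rmH^{0}$ of $\LL^\an_{B/A}\otimes_B\kappa(\mathfrak m)$, independently of the chosen presentation. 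The concluding Tor-amplitude step is essentially the same in both proofs: check at maximal ideals and use the filtration on $\rmL\Symm^i$ of the two-term complex with graded pieces $\Symm^{i-j}(F_0)\otimes\wedge^jF_1[j]$, i.e.\ the formula $\rmL\Symm^j(F_1[1])\simeq(\rmL\wedge^jF_1)[j]$ of~\cite[V.4.3.4]{Ill71}.
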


\begin{proof}
We may always replace $B$ by a rational domain containing the point $\mathfrak{m}$ 
(viewed as a classical Tate point on the associated adic space), so we can assume there are
power bounded elements $f_1, \ldots, f_{d(\mathfrak{m})}$ whose differentials
generate the stalk of $\Omega^\an_{B/A}$
at $\mathfrak{m}$.
Thus we have a map $A' \coloneqq A\langle T_1, \ldots, T_{d(\mathfrak{m})} \rangle \to B$
which is unramified at $\mathfrak{m}$, see~\cite[Section 1.6]{Huber}.
By Proposition 1.6.8 of loc.~cit.~we can factortize the map $A' \to B$ as
$A' \xrightarrow{h} C \xrightarrow{g} B$ where $h$ is \'{e}tale and $g$ is surjective.

One checks that the \'{e}taleness of $h$ guarantees that the surjection $C \xrightarrow{g} B$
has finite Tor dimension.
Moreover~\Cref{GR statements} implies that $\LL_{B/C}$ is a perfect complex because of the triangle
\[
\LL^\an_{C/A} \otimes_C B \ra \LL^\an_{B/A} \ra \LL_{B/C}.
\]
Hence $C \to B$ is a surjective local complete intersection.
Hence the kernel of $C \to B$
around $\mathfrak{m}$ is generated by a length $c(\mathfrak{m})$ regular sequence.
This in turn implies that $\LL_{B/C} \otimes_B B_{\mathfrak{m}} \simeq 
B_{\mathfrak{m}}^{\oplus c(\mathfrak{m})}[1]$,
which together with the triangle above gives the local presentation we want in the statement.

The statement concerning Tor amplitude can be checked at every maximal ideal which,
by our presentation, follows from the formula $\rmL \Symm^i(C[1]) \simeq \rmL \wedge^i(C)[i]$,
see~\cite[V.4.3.4]{Ill71}.
\end{proof}

\bibliographystyle{amsalpha}
\bibliography{template}

\end{document}